%% file: main.tex
\documentclass[11pt]{article}
\usepackage{amsfonts,amssymb,amsthm,eucal,amsmath,verbatim}
\usepackage{setspace}

\usepackage[english]{babel}
\usepackage{tikz}
\usepackage[letterpaper,top=1in,bottom=1in,left=1in,right=1in,marginparwidth=1.75cm]{geometry}
\usepackage{subfigure}

\usepackage{mdframed}
\usepackage{graphicx}
\usepackage{mathtools}
\usepackage[colorlinks=true, allcolors=blue]{hyperref}
\usepackage{url} 
\usepackage{subfiles} 
\usepackage{amsfonts}
\usepackage{amsthm}
\usepackage{subcaption}
\usepackage[section]{placeins}
\usepackage{bbm} 
\usepackage{enumitem}
\usepackage{lmodern}
\usepackage{textcomp} 

\usepackage{mathtools}

\newtheorem{theorem}{Theorem}[section]
\newtheorem{corollary}[theorem]{Corollary}
\newtheorem{lemma}[theorem]{Lemma}
\newtheorem{definition}[theorem]{Definition}

\newtheorem{proposition}[theorem]{Proposition}

\usepackage[normalem]{ulem}

\numberwithin{equation}{section}
\numberwithin{theorem}{section}

\title{Mesoscopic Rates of Convergence for Complex Wishart Matrices at the Leftmost Spectrum Edge}
\date{}
\author{Mengchun Cai%
\thanks{Department of Mathematics, Applied Mathematics, and
  Statistics, Case Western Reserve University, Cleveland, Ohio,
  U.S.A.; mengchun.cai@case.edu.}}

\begin{document}
\maketitle
\input{1_Introduction}

\input{2_Results}
\input{3_Proof}
\input{Appendix}
\bigskip
\noindent{\bf Acknowledgements. }The results of this paper are part of the author's PhD thesis supervised by Professor Mark W. Meckes and Professor Stanislaw J. Szarek. The author is indebted to them for providing important discussions and valuable feedback. Supported in part by the NSF Grant DMS-$1600124$.

\newpage

\bibliographystyle{plain}
\bibliography{my_references}

\end{document}

%% file: 1_Introduction.tex
\begin{abstract}
\noindent This paper establishes mesoscopic rates of convergence in the $L^1$-Wasserstein distance for eigenvalue determinantal point processes (DPPs) derived from the Laguerre Unitary Ensemble (LUE) to the corresponding limiting point processes as the dimension goes to infinity. Specifically, we prove convergence rates at the leftmost edge of the LUE spectrum, which corresponds to the least eigenvalue. These results are termed mesoscopic because they allow for a direct comparison of point counts between the convergent DPPs and their limits over a range of scales. Our approach relies on controlling the trace class norm of the integral operators defined by the DPP kernels. As an important byproduct, the mesoscopic analysis also yields the convergence rate of the least eigenvalue to the Tracy–Widom distribution $TW_2$ for the complex Wishart ensemble (the same as the LUE when the dimension satisfies certain conditions).
\end{abstract}
\
\section{Introduction}
Random matrix theory (RMT) is an active subfield of mathematics, statistics and physics. It was initiated in statistics and introduced to physics by Wigner and Dyson in the 1950s-1960s. Given its mathematical richness, RMT has become very popular since the end of 1970s through the development of quantum physics, statistical mechanics, probability theory, geometric functional analysis,  theoretical computer science and qualitative finances. Here we cite \cite{forrester_developments_2003} as a well-rounded introduction for the history of RMT.

The literature on RMT is huge and varied, extending into its applications to a wide variety of seemingly unrelated domains, ranging from stability of ecosystems and financial markets to zeros of the Riemann $\zeta$ function (for comprehensive introduction, see \cite{akemann_oxford_2015,anderson2010introduction,mehta_random_2004,pastour_eigenvalue_2011,potters_first_2020}).

Naturally, in RMT, many of the most important questions revolve around the (random) spectrum of different classes of random matrices. The main purpose of this paper is to analyze non-asymptotic (or, perhaps better, ``beyond asymptotic") results on the leftmost spectrum edge of large Wishart matrices. Let us fix the matrix models that we will be studying.
\begin{definition}
  Consider a matrix $X\in M^{n\times m}(\mathbb{C})$ with all i.i.d.\ complex Gaussian entries such that
    \begin{equation*}
        \mathbb{E} X_{j,k}=\mathbb{E}X_{j,k}^2=0,~\mathbb{E}|X_{j,k}|^2=1
    \end{equation*}
    for any $j,~k$. Then the Hermitian matrix $W_{n,m}:=X^*X\in M^{m}(\mathbb{C})$ is called a \textbf{complex Wishart matrix}, or a \textbf{Laguerre Ensemble}.
\end{definition}

The Wishart ensemble, serving as a matrix analog of the $\chi^2$ random variable, first emerged in the late 1920s within multivariate statistics and was later adopted by other fields due to its wide applicability (see \cite{muri-multi-1982} for an introduction). Specifically, we refer to \cite{guhr_random-matrix_1998} for applications in quantum mechanics, \cite{aubrun_alice_2017} in quantum information theory, \cite{vershynin_high-dimensional_2018} in data analysis, \cite{couillet_random_2023} in machine learning and \cite{plerou_random_2002} in finance.

The work of this paper owes a lot to the pioneering work of Tracy and Widom \cite{tracy1994level}, \cite{tracy1996orthogonal} and also to that of Johnstone \cite{johnstone2001distribution}, El Karoui \cite{el2006rate} and Ma \cite{ma_accuracy_2012}.

\subsection{Determinantal Point Process (DPPs)}
The basic idea in this paper is to analyze the eigenvalues of a random matrix with large dimension. The eigenvalues can be regarded as a random point process.
\begin{definition}\label{DPP}
 A point process (PP) $\mathfrak{X}$ in a locally compact Polish space $\Lambda$ is a random discrete subset of $\Lambda$. For any $A\subset\Lambda$,  $\mathcal{N}_\mathfrak{X}(A)$ denotes the number of points of $\mathfrak{X}$ in $A$. The integer-valued random function: $A\rightarrow\mathcal{N}_\mathfrak{X}(A)$
is called the \textbf{counting function} of $\mathfrak{X}$.   
\end{definition}
 In this paper, we mainly focus on point processes in $\Lambda=\mathbb{R}$. Given a Borel set $A\subset\mathbb{R}$ and a random matrix $H$, we use the notation $\mathcal{N}_H(A)$ to represent the number of eigenvalues of $H$ contained in $A$. Moreover, we refer to points in a point process as eigenvalues even when they are not true eigenvalues.

Fix a point process $\mathfrak{X}$ on $\mathbb{R}$. If they exist, the \textbf{correlation functions} (also called joint intensities) for $\mathfrak{X}$ are a sequence of locally integrable functions $\{\rho_k:\mathbb{R}^k\rightarrow\mathbb{R} \}_{k=1}^{\infty}$ satisfying the following condition: for all mutually disjoint measurable subsets $\{D_j\}_{j=1}^k$ of $\mathbb{R}$, 
\begin{equation*}
	\mathbb{E} \left[\prod_{j=1}^{k} \mathcal{N}_{\mathfrak{X}}(D_j)\right] = \int_{\prod_j D_j} \rho_k(x_1,\dots,x_k) dx_1 \cdots dx_k.
\end{equation*}   
Analogous to the classic moment problem from probability, under many circumstances correlation functions uniquely specify a point process (see \cite{soshnikov2000determinantal}, where it is shown that this is the case for DPPs).
\begin{definition}
  Let $\mathcal{X}$ be a PP in $\mathbb{R}$. Suppose that there exists a function $K:\mathbb{R}^2\rightarrow\mathbb{C}$ such that, for almost every $x_j\in\mathbb{R}$,
\begin{equation*}
    \rho_k(x_1,\dots,x_k)=\det\left(K(x_j,x_l)_{j,l=1}^k\right),
\end{equation*}
holds for any $k\in\mathbb{N}$. Then $\mathfrak{X}$ is called a \textbf{determinantal~point~process}, and $K$ is called the \textbf{kernel} of $\mathfrak{X}$.  
\end{definition}
As a result, for a DPP, the kernel determines the correlation functions, which themselves determine the PP. Hence, studying the corresponding kernel leads to results for a DPP.

Besides the random matrix ensembles that will be introduced later, let's briefly review two common DPPs we will encounter when considering the leftmost edge of the LUE
\begin{definition}\label{def4 classic DPPs}
An \textbf{Airy~point~process} $\mathfrak{X}_{Ai}$ is a DPP given by the kernel:
\begin{equation}\label{airy kernel}
    K_{Ai}(x,y)=\left\{\begin{array}{rr}
      \frac{Ai(x)Ai'(y)-Ai(y)Ai'(x)}{x-y},~~x\ne y\\\\
      \left(Ai'(x)\right)^2-xAi^2(x),~~x=y
    \end{array}\right.
\end{equation}
for $(x,y)\in\mathbb{R}^2$, where
\begin{equation*}
    Ai(x)=\frac{1}{\pi}\int_0^\infty\cos\left(\frac{t^2}{3}+xt\right)dt
\end{equation*}
is the \textbf{Airy~function}.

Similarly, a \textbf{Bessel~point~process} $\mathfrak{X}_{Bes,a}$ with parameter $a\ge0$ is a DPP with the kernel:
\begin{equation}\label{bessel kernel}
    K_{Bes,a}(x,y)=\left\{\begin{array}{cc}
    \frac{\sqrt y J'_a(\sqrt y)J_a(\sqrt x)-\sqrt x J'_a(\sqrt x)J_a(\sqrt y)}{2(x-y)},~~x\ne y\\\\
    \frac{x-a^2}{4x}\left(J_a(\sqrt x)\right)^2+\frac{1}{4}\left(J'_a(\sqrt x)\right)^2,~~x=y
    \end{array}\right.
\end{equation}
for $x,~y\ge0$, where
\begin{equation*}
    J_a(x)=\sum_{m=0}^\infty\frac{(-1)^m}{\Gamma(m+1)\Gamma(m+a+1)}\left(\frac{x}{2}\right)^{2m+a}
\end{equation*}
is the \textbf{Bessel~function} with parameter $a$.
\end{definition}

\subsection{The LUE and the M--P Law}
The eigenvalue distribution addressed in this paper is also referred to as the following important ensemble in RMT.
\begin{proposition}\label{ei4LUE}\cite[Proposition $3.2.2$]{log-gas}
   Suppose $a\in\mathbb{N}$. Let $X$ be an $(N+a)\times N$ matrix with i.i.d.\ complex standard Gaussian entries, then the joint distribution of the eigenvalues of $L_{a,N}:=X^*X$ has the following probability density function:
    \begin{equation}\label{J4LUE}
        p_{L_{a,N},2}(x_1,x_2,...,x_N)=\frac{1}{C_{L_{a,N},2}}\prod_{1\le j< k\le N}\chi_{\{x_j\ge0\}}|x_j-x_k|^2\sum_{j=1}^N x_j^a\exp(-x_j)
    \end{equation}
    for $C_{L_{a,N},2}$ appropriate normalization constant.
    
    In addition, an $N$-dimensional nonnegative random vector $(x_1,x_2,..,x_N)$ with the joint probability density function as \eqref{J4LUE} is called a \textbf{Laguerre Unitary Ensemble (LUE)} with parameter $a$.
   For convention, we say the Wishart matrix $L_{a,N}$ belongs to the LUE.
\end{proposition}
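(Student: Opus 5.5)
The proof will be a change of variables from the entries of $X$ to the eigenvalues of $L_{a,N}=X^*X$, carried out in two stages. Note first that the product sign in \eqref{J4LUE} should be a product: the density is $\prod_{j<k}|x_j-x_k|^2\prod_{j}x_j^a e^{-x_j}$ on $[0,\infty)^N$.

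\textbf{Stage 1: the matrix density of $L_{a,N}$.} The entries of $X$ have joint density proportional to $\exp(-\operatorname{tr}X^*X)$ with respect to Lebesgue measure on $\mathbb{C}^{(N+a)\times N}\cong\mathbb{R}^{2N(N+a)}$. Since $N+a\ge N$, $X$ almost surely has full column rank, so $W=X^*X$ is almost surely positive definite. I would show that $W$ has density proportional to
\[
(\det W)^{a}e^{-\operatorname{tr}W}
\]
on the cone of positive definite Hermitian matrices. To do this, write $X=QR$ with $R$ upper triangular with positive diagonal (a Gram--Schmidt/Bartlett decomposition), so that $W=R^*R$ is the Cholesky factorization. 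Standard Jacobian computations then give:
\begin{itemize}
\item the map $X\mapsto(Q,R)$ has Jacobian proportional to $\prod_j r_{jj}^{2(N+a-j)+1}$;
\item the map $R\mapsto W=R^*R$ has Jacobian proportional to $\prod_j r_{jj}^{2(N-j)+1}$.
\end{itemize}
Integrating out $Q$ over the Stiefel manifold leaves a density in $W$ proportional to $\prod_j r_{jj}^{2a}e^{-\operatorname{tr}W}=(\det W)^a e^{-\operatorname{tr}W}$. As an alternative that avoids tracking Jacobians, one can compute the Laplace transform: $\mathbb{E}\,e^{-\operatorname{tr}(TW)}=\det(I+T)^{-(N+a)}$. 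This matches the complex Wishart density with $N+a$ degrees of freedom, which is exactly the exponent $a$ above.

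\textbf{Stage 2: from matrix to eigenvalues.} Diagonalize $W=U\Lambda U^*$ with $U$ unitary and $\Lambda=\operatorname{diag}(x_1,\dots,x_N)$. The Weyl integration formula for Hermitian matrices gives
\[
dW \propto \prod_{j<k}|x_j-x_k|^2\,d\Lambda\,dU,
\]
where $dU$ is Haar measure modulo the torus. The power $2$ is the $\beta=2$ case: each off-diagonal complex entry contributes two real dimensions. Since $(\det W)^a e^{-\operatorname{tr}W}=\prod_j x_j^a e^{-x_j}$ depends only on the eigenvalues, integrating out $U$ yields the stated density, with all constants absorbed into $C_{L_{a,N},2}$. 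This density is taken symmetrized over unordered eigenvalues, and the indicator $\chi_{\{x_j\ge0\}}$ records positivity.

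\textbf{Main obstacle.} The main obstacle is the Jacobian bookkeeping in Stage 1, in particular getting the exponent $a=(N+a)-N$ right. The Laplace-transform argument is the cleaner route. Since the statement is cited from \cite[Proposition $3.2.2$]{log-gas}, I would present the two Jacobian facts as standard and refer to that source for the constants.
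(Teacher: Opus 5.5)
Your proposal is correct and is the standard derivation behind the cited result; the paper gives no proof of its own, only the reference to Forrester's Proposition 3.2.2. Your QR and Cholesky Jacobians are right, and their ratio $\prod_j r_{jj}^{2a}=(\det W)^a$ gives the matrix density (equivalently, via $\mathbb{E}\,e^{-\operatorname{tr}(TW)}=\det(I+T)^{-(N+a)}$ and injectivity of the Laplace transform). The $\beta=2$ Weyl formula then yields the eigenvalue density. You were also right that the $\sum_j$ in \eqref{J4LUE} is a typo for $\prod_j$. Likewise, the indicator should read $\prod_{j=1}^N\chi_{\{x_j\ge0\}}$, since as written $x_N$ is unconstrained.
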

Although the general definition for the LUE makes sense whenever $a>-1$, we only focus on the case where $a\in\mathbb{N}$ because of the corresponding matrix model in this setting.

One well-known fact is that the LUE with parameter $a\in\mathbb{N}$ forms a DPP with the kernel:
\begin{equation}\label{LUE kernel}
    K_{L_a,N}(x,y)=\frac{-\Gamma(N+1)}{\Gamma(N+a)}\frac{\psi_{L_a,N}(x)\psi_{L_a,N-1}(y)-\psi_{L_a,N}(y)\psi_{L_a,N-1}(x)}{x-y}
\end{equation}
where
\begin{equation*}
\begin{split}
       \psi_{L_a,j}(x):=x^{\frac{a}{2}}e^{-\frac{x}{2}}\chi_{\{x\ge0\}}L_j^a(x)=x^{\frac{a}{2}}e^{-\frac{x}{2}}\chi_{\{x\ge0\}}(-1)^j j!\sum_{k=0}^j\left(\begin{matrix}
        j+a\\j-k
    \end{matrix}\right)\frac{(-1)^k}{k!}x^k
\end{split}
\end{equation*}
is the $j$th \textbf{Laguerre function} with parameter $a$ and $L_j^a(x)$
is the $j$th \textbf{Laguerre polynomial} with parameter $a$ (see \cite[Equations $(5.46)~to~(5.48)$]{log-gas}).

Given a Wishart matrix $W_{m,n}$, the following probability measure is closely related to its \textbf{Empirical Spectrum Measure} (ESM).
\begin{definition}\label{ESM}
    Given an Hermitian matrix $H\in M^n(\mathbb{C})$, the ESM of $H$ is defined as:
    \begin{equation*}
        \mu_H:=\frac{1}{n}\sum_{j=1}^n\delta_{\lambda_j}
    \end{equation*}
    where $\lambda_j$'s are eigenvalues of $H$.
\end{definition}
\begin{definition}\label{M-P law}
    Given two parameters $\lambda>0$ and $\sigma>0$, define
    \begin{equation*}
        g_{\lambda,\sigma}(x):=\frac{1}{2\pi\sigma^2}\frac{\sqrt{(\lambda^+-x)(x-\lambda^-)}}{\lambda x}\chi_{[\lambda^-,\lambda^+]}
    \end{equation*}
    where $\lambda^{\pm}:=\sigma^2(1\pm\sqrt\lambda)^2$. We say a probability measure $\rho_{\lambda,\sigma}$ follows the \textbf{Marchenko--Pastur distribution} with parameter $\lambda$ and $\sigma$ if and only if for any measurable $A\subset\mathbb{R}$,
    \begin{equation*}
        \rho_{\lambda,\sigma}(A)=\left\{\begin{array}{l}
          \chi_{\{0\in A\}}(1-\lambda^{-1})+\mu_{\lambda,\sigma}(A),~\lambda>1\\
          \\
          \mu_{\lambda,\sigma}(A),~\lambda\in(0,1]
        \end{array}\right.
    \end{equation*}
    where $\mu_{\lambda,\sigma}$ has the density $g_{\lambda,\sigma}$ with respect to the Lebesgue measure on $\mathbb{R}$.
\end{definition}
For a Wishart matrix $W_{n,m}$, the following \textbf{Marchenko--Pastur law}, which was first proved by Marchenko and Pastur in \cite{marcenko_distribution_1967}, holds for its ESM.
\begin{proposition}
    \cite[Theorem $3.6\&3.7$]{bai_spectral_2010} Let $W_{n,m}$ be Wishart matrices (real or complex) with $\lim\limits_{n\rightarrow\infty}\frac{m}{n}=\gamma\in(0,\infty)$. Then with probability one, the ESMs of $W'_{n,m}:=\frac{1}{n}W_{n,m}$ converge to $\rho_{\gamma,1}$ in distribution. 
\end{proposition}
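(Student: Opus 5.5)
We plan to prove the statement by the Stieltjes transform method. Write $X$ for the $n\times m$ Gaussian matrix and $W'_{n,m}=\frac1n X^*X\in M^m(\mathbb{C})$. For $z\in\mathbb{C}^+$ set $s_m(z):=\frac1m\operatorname{tr}(W'_{n,m}-z)^{-1}=\int\frac{d\mu_{W'_{n,m}}(x)}{x-z}$. Since a probability measure is determined by its Stieltjes transform, the statement reduces to three claims. First, almost surely $s_m(z)\to s(z)$ for every $z$ in a countable dense subset of $\mathbb{C}^+$. Second, $s$ is the Stieltjes transform of $\rho_{\gamma,1}$. Third, the sequence $\{\mu_{W'_{n,m}}\}$ is almost surely tight. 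Tightness is easy: $\int x\,d\mu_{W'_{n,m}}=\frac{1}{mn}\sum_{j,k}|X_{j,k}|^2\to1$ almost surely by the strong law of large numbers. Given tightness, pointwise convergence of Stieltjes transforms on a dense set upgrades vague convergence to convergence in distribution.

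\textbf{Concentration.} Let $x_k$ be the columns of $X$ and let $\mathbb{E}_k$ be conditional expectation given $x_1,\dots,x_k$. Then $s_m-\mathbb{E}s_m=\sum_k(\mathbb{E}_k-\mathbb{E}_{k-1})s_m$ is a martingale difference sum. Removing the $k$th column changes $\operatorname{tr}(W'-z)^{-1}$ by at most $1/\operatorname{Im}z$ (a rank-one interlacing bound). Burkholder's inequality therefore gives $\mathbb{E}|s_m-\mathbb{E}s_m|^4=O(m^{-2}(\operatorname{Im}z)^{-4})$. Borel--Cantelli then yields $s_m(z)-\mathbb{E}s_m(z)\to0$ almost surely for each fixed $z$, hence simultaneously on a countable dense set.

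\textbf{Self-consistent equation.} This is the main step. By the Schur complement formula for the $(k,k)$ entry of $(W'-z)^{-1}$,
\begin{equation*}
\big((W'-z)^{-1}\big)_{kk}=\frac{1}{\frac1n x_k^*x_k-z-\frac1{n^2}x_k^*X_{(k)}\big(\tfrac1nX_{(k)}^*X_{(k)}-z\big)^{-1}X_{(k)}^*x_k},
\end{equation*}
where $X_{(k)}$ is $X$ with column $k$ removed. Because $x_k$ is independent of $X_{(k)}$, the quadratic form concentrates around $\frac1n\operatorname{tr}\,\frac1nX_{(k)}(\frac1nX_{(k)}^*X_{(k)}-z)^{-1}X_{(k)}^*$; the variance of a Gaussian quadratic form is $O(\|A\|_{HS}^2/n^2)$. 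Using $\operatorname{tr}A(A^*A-z)^{-1}A^*=\operatorname{tr}AA^*(AA^*-z)^{-1}$ and the relation between the resolvent traces of $X X^*$ and $X^*X$ (they differ by $(m-n)/z$ from the zero eigenvalues), this trace equals $\frac mn(1+z s_m)+O(1/n)$. Averaging over $k$ and controlling the error terms uniformly, with the denominators bounded below by $\operatorname{Im}z$, gives
\begin{equation*}
\mathbb{E}s_m(z)=\frac{1}{1-\gamma-z-\gamma z\,\mathbb{E}s_m(z)}+o(1).
\end{equation*}
The main obstacle is making these error estimates rigorous. That means the concentration of the quadratic forms, the rank-one perturbation from $X_{(k)}$ to $X$, and passing $\mathbb{E}$ through $1/(\cdot)$ via Lipschitz bounds on $\mathbb{C}^+$.

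\textbf{Identification of the limit.} Any subsequential limit $s$ of $\mathbb{E}s_m$ (these are analytic and locally bounded, so Montel's theorem applies) satisfies $\gamma z s^2+(z+\gamma-1)s+1=0$ with $\operatorname{Im}s>0$. This determines $s$ uniquely as the root with the correct branch. Computing this root explicitly recovers the Stieltjes transform of $\rho_{\gamma,1}$, namely density $g_{\gamma,1}$ on $[\lambda^-,\lambda^+]$ together with a pole at $0$ of residue $-(1-\gamma^{-1})$ when $\gamma>1$. Stieltjes inversion, $g(x)=\lim_{\varepsilon\downarrow0}\frac1\pi\operatorname{Im}s(x+i\varepsilon)$, confirms this. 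So $\mathbb{E}s_m\to s$, and combined with the concentration step, $s_m\to s$ almost surely, which completes the proof.
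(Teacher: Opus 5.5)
The paper gives no proof of this statement; it quotes it from Bai--Silverstein, where Theorems 3.6--3.7 are proved by the moment method. There the entries are truncated, centralized and rescaled, and $\mathbb{E}\frac1m\operatorname{tr}(W'_{n,m})^k$ is computed by graph counting to recover the moments of $\rho_{\gamma,1}$. Almost sure convergence then follows from a fourth-moment bound and Borel--Cantelli, and the limit is determined by its moments because it has compact support. Your Stieltjes-transform route is a genuinely different argument: martingale concentration of $s_m$, then the Schur complement with quadratic-form concentration to get $\mathbb{E}s_m=(1-\gamma-z-\gamma z\,\mathbb{E}s_m)^{-1}+o(1)$, then solving the quadratic. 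It replaces all the combinatorics by resolvent identities with bounds uniform in $\operatorname{Im}z$, and it is the approach that extends to local laws. The moment method in exchange needs no analytic identification of the limit, and it treats arbitrary finite-variance entries, real or complex, with one truncation scheme. You would still need the real-Gaussian version of the quadratic-form variance bound, which only costs a factor $2$.

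Your argument is sound, but three steps need tightening.
\begin{enumerate}
\item \textbf{Trace bound in the martingale step.} Removing column $k$ of $X$ deletes a row and column of the $m\times m$ matrix $W'$. The bound you need is therefore the principal-submatrix (interlacing) bound $|\operatorname{tr}(A-z)^{-1}-\operatorname{tr}(A_{(k)}-z)^{-1}|\le 1/\operatorname{Im}z$, not a rank-one bound. The constant you use is correct.
\item \textbf{Tightness.} Invoking the strong law of large numbers is imprecise for a triangular array. Use chi-square concentration of $\sum_{j,k}|X_{jk}|^2$ together with Borel--Cantelli. Alternatively, drop tightness altogether: any vague subsequential limit has Stieltjes transform $s$, hence equals the probability measure $\rho_{\gamma,1}$, so no mass escapes.
\item \textbf{Choice of root.} When $\gamma>1$, the two roots of $\gamma zs^2+(z+\gamma-1)s+1=0$ have imaginary parts summing to $\frac{\gamma-1}{\gamma}\frac{\operatorname{Im}z}{|z|^2}>0$. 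So the claim that ``$\operatorname{Im}s>0$ selects the root'' needs an argument.
\begin{itemize}
\item Neither $0$ nor $-1/\gamma$ is ever a root. A real root would therefore force $z=-\frac1s+\frac{1}{1+\gamma s}\in\mathbb{R}$.
\item Hence no root is real for $z\in\mathbb{C}^+$, and by continuity the number of roots in $\mathbb{C}^+$ is constant on $\mathbb{C}^+$.
\item At $z=iy$ with $y\to\infty$, the roots are $-\frac1z+O(y^{-2})$ and $-\frac1\gamma+\frac{1}{\gamma z}+O(y^{-2})$, and exactly one of them lies in $\mathbb{C}^+$.
\end{itemize}
The same argument covers your subsequential limits, which a priori only satisfy $\operatorname{Im}s\ge 0$.
\end{enumerate}
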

For a sequence of Wishart matrices $W_{n,m}=X^*X$ such that $\frac{m}{n}\rightarrow\gamma>1$, we see it must have a bunch of $0$'s, according to its rank, as the eigenvalues when $n$ is large. These $0$ eigenvalues are considered trivial as they provide us with little interesting information but a mass around $0$ as in the M--P distribution. Noticing that $X^*X$ and $XX^*$ have the same nontrivial eigenvalues, we always assume $\gamma\in(0,1]$ in this paper and only draw attention to $L_{a,N}$ as similar discussion applies to $XX^*$ when $\gamma>1$. 

Given an LUE $L_{a,N}$, the density $g_{1,\gamma}$ already identifies $(1-\sqrt\gamma)^2$ and $(1+\sqrt\gamma)^2$ as two spectrum edges. At the right edge---the expected position of the largest eigenvalue---points converge to the Airy point process after proper scaling as the dimension goes to infinity (see \cite{cai2025mesoscopicratesconvergencehermitian}, \cite{el2006rate} and \cite{johnstone2001distribution}). Since there is no bound that absolutely limits the size of the largest eigenvalue (as argued in \cite{bai1993convergence} and \cite{geman_limit_1980}, although it is possible for the largest eigenvalue to exceed its typical location, the probability of this event is very low), the right edge is often referred to as the \textbf{soft edge}.

The behavior at the left edge, which is the main focus of this paper, is limited by the fact that the least eigenvalue of a positive semidefinite matrix must be absolutely bounded below by $0$. Consequently, both the location and nature vary according to the value of limiting ratio $\lim\limits_{N\rightarrow\infty}\frac{N}{N+a}:=\gamma\in(0,1]$. In fact, the authors in \cite{hachem_large_2016} established a comprehensive classification of the edges of the support of the ESM for complex Wishart matrices (LUE). Simply following their framework, we restrict our attention to two regular regimes for the left edge of LUE. First, $\lim\limits_{N\rightarrow\infty }a(N)=a$, or equivalently, $a(N)$ is a fixed integer for $N$ large. In this setting, the left edge of $L_{a,N}$ touches $0$, which is a natural bound for the eigenvalues as $L_{a,N}$ is a positive semidefinite matrix. Under this assumption, $0$ is called the \textbf{regular~hard~edge} of $L_{a,N}$. After proper rescaling, around the regular hard edge, authors in \cite{cai2025mesoscopicratesconvergencehermitian} established the ROC of the LUE towards the Bessel point process with parameter $a$. Second, the ratio $\gamma$ is in $(0,1)$. In this case, the left edge is called a \textbf{regular~soft~edge} as it is bounded away from $0$. Near this edge, the main purpose of this paper is to show that when the dimension gets large, the LUE converges to the \textbf{symmetric~Airy~point~process} after proper scaling.

Given a Polish metric space $(\Omega,d)$, two probability measures $\mu$ and $\nu$ on $\Omega$ and $p\ge1$, the $L^p$-Wasserstein distance $W_p$ between $\mu$ and $\nu$ is defined as:
\begin{equation*}
W_p(\mu,\nu):=\\\inf_{(x,y)\in\pi(\mu,\nu)}\left(\mathbb{E}d^p(x,y)\right)^{\frac{1}{p}}
\end{equation*}
where $\pi(\mu,\nu)$ contains all couplings of $\mu$ and $\nu$. The choice of $W_1$ in this paper is motivated by a powerful lemma that serves as the starting point for our proofs. This metric is widely used in probability theory and in particular, in optimal transport (see \cite[Chapter $6$]{villani2009optimal}). For the limiting behaviors mentioned above, our goal in this paper is to provide a ROC for different ensembles with respect to the $L^1$-Wasserstein distance $W_1$.

The following result in \cite{cai2025mesoscopicratesconvergencehermitian} and \cite{el2006rate} illustrates the Airy-type fluctuation for the LUE around the right edge. We include it here to contrast with our later result for the left edge in Theorem \ref{LUE left soft}.
\begin{theorem}\label{LUE right soft}
   (\cite[Theorem $2.5$]{cai2025mesoscopicratesconvergencehermitian}, right soft edge) Consider an LUE $L_{a,N}$ such that $\lim\limits_{N\rightarrow\infty}\frac{N}{n}=\gamma\in(0,1]$ where $n:=N+a$. Let $I\subset[s,+\infty)$ for $s>-\infty$. Define parameters:
    \begin{equation*}
      \begin{array}{ll}
      \mu^R_{k,m}:=\left(\sqrt{k+\frac{1}{2}}+\sqrt{m+\frac{1}{2}}\right)^2,\\
      \sigma^R_{n,m}:=\left(\sqrt{k+\frac{1}{2}}+\sqrt{m+\frac{1}{2}}\right)\left(\left(k+\frac{1}{2}\right)^{-\frac{1}{2}}+\left(m+\frac{1}{2}\right)^{-\frac{1}{2}}\right)^{\frac{1}{3}},
      \end{array}
    \end{equation*}
\begin{equation*}
    \mu_R=\left(\frac{1}{\sqrt{\sigma_{n-1,N}^{R}}}+\frac{1}{\sqrt{\sigma_{n,N-1}^{R}}}\right)\left(\frac{1}{\mu^R_{n-1,N}\sqrt{\sigma_{n-1,N}^{R}}}+\frac{1}{\mu^R_{n,N-1}\sqrt{\sigma_{n,N-1}^{R}}}\right)^{-1}
\end{equation*}
and
\begin{equation*}
    \sigma_R=\left(\frac{\sqrt{\sigma_{n-1,N}^{R}}}{\mu^R_{n-1,N}}+\frac{\sqrt{\sigma_{n,N-1}^{R}}}{\mu^R_{n,N-1}}\right)\left(\frac{1}{\mu^R_{n-1,N}\sqrt{\sigma_{n-1,N}^{R}}}+\frac{1}{\mu^R_{n,N-1}\sqrt{\sigma_{n,N-1}^{R}}}\right)^{-1}.
\end{equation*}
Let $\mathcal{X}_{L_{a,N}}^{RS}$ denote the right-soft-edge-scaled point process for $L_{a,N}$ with the kernel $K_{L_{a,N}}^{RS}(x,y):=\sigma_{R}K_{L_{a,N}}(t_{R_{N}}(x),t_{R_{N}}(y))$ where $t_{R_N}(x):=\mu_{R}+\sigma_{R}x$ for any $x\in I$. Then on $A\subset I$, for any $s\in\mathbb{R}$, there exists an $n(s,\gamma)\in\mathbb{N}$ such that for any $N>n(s,\gamma)$,
\begin{equation}\label{main L right soft}
    W_1(\mathcal{N}_{L_{a,N}}^{RS},\mathcal{N}_{Ai})\le\frac{g(s)e^{-s}}{N^{\frac{2}{3}}}
\end{equation}
for $g(s)$ some continuous non-increasing function of $s$.
\end{theorem}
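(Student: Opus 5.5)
The approach is the standard one for determinantal processes: reduce the Wasserstein bound to a trace-norm estimate on the kernels, then control that estimate with uniform asymptotics for Laguerre functions near the turning point. For two DPPs with locally trace class Hermitian kernel operators $K_1,K_2$ restricted to $A$, the counting variables $\mathcal{N}(A)$ are each distributed as sums of independent Bernoulli variables whose parameters are the eigenvalues of the restricted operators. Coupling these Bernoulli sums gives
\begin{equation*}
W_1(\mathcal{N}_1(A),\mathcal{N}_2(A))\le \|\chi_A(K_1-K_2)\chi_A\|_{1},
\end{equation*}
where $\|\cdot\|_1$ is the trace class norm. So it suffices to bound $\|K^{RS}_{L_{a,N}}-K_{Ai}\|_1$ on $L^2([s,\infty))$ by $g(s)e^{-s}N^{-2/3}$.

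To control the trace norm I would use the integrable structure of both kernels. First I write each kernel as an integral of a product of one-variable functions: $K_{Ai}(x,y)=\int_0^\infty Ai(x+z)Ai(y+z)\,dz$. For the rescaled LUE kernel, the Christoffel--Darboux form \eqref{LUE kernel} together with the Laguerre three-term and differentiation relations gives an analogous representation
\begin{equation*}
K^{RS}_{L_{a,N}}(x,y)=\int_0^\infty \big(\phi_N(x+z)\phi_N(y+z)+\tilde\phi_N(x+z)\tilde\phi_N(y+z)\big)\,dz/2 ,
\end{equation*}
with $\phi_N,\tilde\phi_N$ suitably normalized and rescaled $\psi_{L_a,N},\psi_{L_a,N-1}$. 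This is the structure exploited by El Karoui and by Johnstone, and the averaged centering $\mu_R,\sigma_R$ is built from the two individual centerings $\mu^R_{n-1,N},\mu^R_{n,N-1}$ precisely so that this representation holds.

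Each kernel is then of the form $TT^*$ with $T$ a Hilbert--Schmidt operator from $L^2(0,\infty)$ to $L^2([s,\infty))$. Writing $K_1-K_2=(T_1-T_2)T_1^*+T_2(T_1-T_2)^*$ yields
\begin{equation*}
\|K_1-K_2\|_1\le \|T_1-T_2\|_{HS}\big(\|T_1\|_{HS}+\|T_2\|_{HS}\big),
\end{equation*}
where each Hilbert--Schmidt norm is a double integral $\int_s^\infty\int_0^\infty|f(x+z)|^2\,dz\,dx=\int_s^\infty (u-s)|f(u)|^2\,du$. The whole problem therefore reduces to pointwise bounds on $|\phi_N(u)-Ai(u)|$ and $|\tilde\phi_N(u)-Ai(u)|$ for $u\ge s$ that are integrable against $(u-s)\,du$.

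The key input, and the main obstacle, is a uniform bound of the form
\begin{equation*}
|\phi_N(u)-Ai(u)|\le C(s)N^{-2/3}e^{-u/2}\qquad (u\ge s).
\end{equation*}
I would obtain it from Plancherel--Rotach/Olver turning-point asymptotics for Laguerre functions (a Liouville--Green expansion with Airy comparison), as El Karoui does. The second-order $N^{-2/3}$ rate, rather than $N^{-1/3}$, depends on the specific choice of $\mu_R,\sigma_R$, which cancels the first-order correction between the two index shifts.

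Two regimes need separate handling. Large $u$ requires exponential decay for both functions, available from the Olver error control and the decay of $Ai$. The range $u$ beyond the edge of the support, up to order $N^{2/3}$ in rescaled units, needs a crude exponential bound for the Laguerre functions themselves. Finally, integrating $(u-s)e^{-u}$ over $[s,\infty)$ produces the factor $e^{-s}$ times a polynomial in $|s|$, which is absorbed into a continuous non-increasing $g(s)$. The threshold $n(s,\gamma)$ comes from requiring the asymptotic expansion to be valid on $[s,\infty)$.
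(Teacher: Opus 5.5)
Your reduction to a trace-norm bound, controlled through Hilbert--Schmidt norms of Hankel-type factors, is the right frame. The core step, however, fails.

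The Laguerre identities do not give a sum of two real squares. What they give (Proposition \ref{LUE fac}, after the affine rescaling) is $K^{RS}=\mathbf{G}\mathbf{H}+\mathbf{H}\mathbf{G}$, i.e.\ $K^{RS}(x,y)=\int_0^\infty[\xi(x+z)\eta(y+z)+\eta(x+z)\xi(y+z)]\,dz$. The form $\xi\otimes\eta+\eta\otimes\xi=\frac12[(\xi+\eta)\otimes(\xi+\eta)-(\xi-\eta)\otimes(\xi-\eta)]$ is indefinite, so it cannot be written as $\frac12(\phi_N\otimes\phi_N+\tilde\phi_N\otimes\tilde\phi_N)$ with real factors, let alone with both factors near $Ai$.

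More importantly, the bound $|\phi_N(u)-Ai(u)|\le C(s)N^{-2/3}e^{-u/2}$ you want for each factor is false when $\gamma<1$. The natural centres of $\xi$ and $\eta$ differ by $\mu^R_{n-1,N}-\mu^R_{n,N-1}\approx (n-N)/\sqrt{nN}=\Theta(1)$. In units of $\sigma_R$ this is $\Theta(N^{-1/3})$, so under the common scaling $t_{R_N}$ each factor deviates from $\pm Ai/\sqrt2$ by order $N^{-1/3}$.

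The averaged pair $(\mu_R,\sigma_R)$ is not what makes the representation hold; that holds for any affine rescaling. It is chosen so that the opposite first-order shifts cancel in the \emph{sum} $\xi+\eta$ (the $U_N=V_N=0$ computation in the proof of Lemma \ref{re4D}). With only $N^{-1/3}$ accuracy per factor, your telescoping estimate $\|T_1-T_2\|_{HS}(\|T_1\|_{HS}+\|T_2\|_{HS})$ yields only $W_1=O(N^{-1/3})$.

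The missing idea is to let the antisymmetric part enter quadratically. You need two different pointwise estimates: $|G_\tau+H_\tau-\sqrt2Ai|\le CN^{-2/3}e^{-t/2}$, which is where the averaged scaling is used, and only $|G_\tau-H_\tau|\le CN^{-1/3}e^{-t/2}$. Then apply Lemma \ref{21in},
\[2\|\mathbf{G}\mathbf{H}+\mathbf{H}\mathbf{G}-\mathbf{Ai}^2\|_1\le\|\mathbf{G}+\mathbf{H}-\sqrt2\mathbf{Ai}\|_2\,\|\mathbf{G}+\mathbf{H}+\sqrt2\mathbf{Ai}\|_2+\|\mathbf{G}-\mathbf{H}\|_2^2,\]
in which the $N^{-1/3}$ discrepancy between $\mathbf{G}$ and $\mathbf{H}$ appears squared. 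Equivalently, your telescoping argument works if you apply it to $\frac12(AA^*-BB^*)-\mathbf{Ai}\mathbf{Ai}^*$ with $A=\mathbf{G}+\mathbf{H}$ and $B=\mathbf{G}-\mathbf{H}$, using $\|BB^*\|_1=\|B\|_2^2$.

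A smaller point: a crude Laguerre bound on a range of length $O(N^{2/3})$ `beyond the edge of the support' is a left-edge feature, where $\mu-\sigma u$ reaches $0$. At the right edge $\mu_R+\sigma_R u\to\infty$, and the tail $u\ge N^{1/6}$ is handled by Olver's error bound together with the decay of $Ai$.
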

Similar to our discussion for the least eigenvalue later, equation \eqref{main L right soft} immediately implies ROCs of order $N^{-\frac{2}{3}}$ for the largest eigenvalues in the LUE to the Tracy--Widom distribution. Moreover, the Liouville--Green method used in \cite{el2006rate} serves as the main tool for our analysis in this paper.

\subsection{Outline of Proof}
For a kernel function $K(x,y)$ on some domain $D^2\subset\mathbb{R}^2$, we can define a corresponding integral operator $\mathbf{K}$ acting on $L^2(D)$ by the formula:
\begin{equation*}
    \mathbf{K}(f)(x)=\int_D K(x,y)f(y)dy
\end{equation*}
for any $f\in L^2(D)$. Recall further that there is a hierarchy of classes of such (compact) operators (again analogous to $L^p$ spaces) based on the rate of decay of the singular values of the operator known as the \textbf{Schatten $p$-Norms}. Given a compact operator $\mathbf{T}$, it belongs to the Schatten $p$-class if
\begin{equation*}
 \lVert \mathbf{T} \rVert_{S_p}:= \left( \sum_{n=1}^{\infty} s_n(\mathbf{T})^p\right)^{\frac{1}{p}} < \infty
\end{equation*}
where $s_n(\mathbf{T})$'s are singular values of $\mathbf{T}$. For convenience, for a given suitable operator $\mathbf{L}$, we simply denote its Schatten $p$-norm as $\|\mathbf{L}\|_p$ in this paper.

As expected, the two most important cases are $p=1$ and $p=2$. The $p=1$ case are known as trace-class operators and the $p=2$ case are called Hilbert--Schmidt operators.

Soshnikov proved in \cite{soshnikov2000determinantal} that the kernel of a self-adjoint, locally trace-class operator defines a determinantal point process if and only if all the eigenvalues of the operator are between $0$ and $1$. Consequently, it is possible to translate the question regarding random point counts to the analysis of the corresponding integral operators.

The following lemmas in \cite{cai2025mesoscopicratesconvergencehermitian} provide us with tangible bounds for $W_1$ distance between random point counts.

\begin{lemma}\label{M and M}\cite[Lemma $1.1$]{cai2025mesoscopicratesconvergencehermitian}
	Consider two DPPs $\mathfrak{X}$ and $\widetilde{\mathfrak{X}}$ with Hermitian kernels $K(x,y)$ and $\widetilde{K}(x,y)$ and associated integral operators $\mathbf{K}$ and $\widetilde{\mathbf{K}}$ respectively. Assume the integral operators are trace class. Then,
	\begin{equation*}
	d_{TV}(\mathcal{N}_\mathcal{X},\mathcal{N}_{\tilde{\mathcal{X}}})\le W_1(\mathcal{N}_\mathcal{X},\mathcal{N}_{\tilde{\mathcal{X}}})
	\leq \lVert \mathbf{K} - \widetilde{\mathbf{K}} \rVert_1
	\end{equation*}
    where $d_{TV}$ is the total variation distance.
\end{lemma}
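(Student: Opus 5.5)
The plan is to prove the two inequalities separately, using only that counting functions are integer-valued and the spectral (Bernoulli) representation of determinantal counts. Throughout, $\mathcal{N}_\mathcal{X}$ means $\mathcal{N}_\mathcal{X}(A)$ for a fixed Borel set $A$. Replacing $\mathbf{K}$ by the compression $\chi_A\mathbf{K}\chi_A$ is harmless, since
\[
\|\chi_A(\mathbf{K}-\widetilde{\mathbf{K}})\chi_A\|_1\le\|\mathbf{K}-\widetilde{\mathbf{K}}\|_1 .
\]
So we may assume both operators act on $L^2(A)$.

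\emph{Lower bound.} Take any coupling $(X,Y)$ of $\mathcal{N}_\mathcal{X}$ and $\mathcal{N}_{\widetilde{\mathcal{X}}}$. Since both variables are integer-valued, $|X-Y|\ge \mathbf 1\{X\neq Y\}$, so $\mathbb{E}|X-Y|\ge \mathbb{P}(X\ne Y)\ge d_{TV}$. Taking the infimum over couplings gives $d_{TV}\le W_1$.

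\emph{Upper bound.} By Soshnikov's theorem (and the Hough--Krishnapur--Peres--Vir\'ag representation), a DPP with Hermitian, trace-class kernel has spectrum in $[0,1]$. Its count $\mathcal{N}_\mathcal{X}(A)$ is distributed as $\sum_j \xi_j$, where the $\xi_j$ are independent Bernoulli$(\lambda_j)$ and $\lambda_1\ge\lambda_2\ge\cdots\ge0$ are the eigenvalues of $\mathbf{K}$ (padded with zeros if finitely many). The same holds for $\widetilde{\mathbf{K}}$ with eigenvalues $\widetilde\lambda_j$, arranged decreasingly.

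Couple the two counts with one i.i.d.\ uniform sequence $U_j$ by setting
\[
X=\sum_j\mathbf 1\{U_j\le\lambda_j\},\qquad Y=\sum_j\mathbf 1\{U_j\le\widetilde\lambda_j\}.
\]
Both series converge almost surely because $\sum\lambda_j=\operatorname{tr}\mathbf{K}<\infty$. Then
\[
W_1\le\mathbb{E}|X-Y|\le\sum_j\mathbb{E}\bigl|\mathbf 1\{U_j\le\lambda_j\}-\mathbf 1\{U_j\le\widetilde\lambda_j\}\bigr|=\sum_j|\lambda_j-\widetilde\lambda_j|.
\]

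\emph{Main obstacle: the spectral inequality.} It remains to show $\sum_j|\lambda_j-\widetilde\lambda_j|\le\|\mathbf{K}-\widetilde{\mathbf{K}}\|_1$ for decreasingly ordered eigenvalues of nonnegative self-adjoint trace-class operators. This is the trace-norm case of the Lidskii--Mirsky inequality.

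In finite dimensions it follows from Lidskii's theorem: the vector $\lambda(\mathbf{A})-\lambda(\mathbf{B})$ is majorized by $\lambda(\mathbf{A}-\mathbf{B})$. Applying this with the convex function $t\mapsto|t|$ gives $\sum|\lambda_j(\mathbf A)-\lambda_j(\mathbf B)|\le\sum|\lambda_j(\mathbf A-\mathbf B)|=\|\mathbf A-\mathbf B\|_1$.

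To pass to infinite dimensions, use the compressions $P_n\mathbf{K}P_n$ and $P_n\widetilde{\mathbf{K}}P_n$, where $P_n$ are finite-rank projections increasing to the identity. These compressions converge to $\mathbf{K}$ and $\widetilde{\mathbf{K}}$ in trace norm, and their ordered eigenvalue sequences converge in $\ell^1$ (continuity of ordered eigenvalues, e.g.\ via Ky Fan's minimax together with the trace bound). Passing to the limit in the finite-dimensional inequality then completes the proof.
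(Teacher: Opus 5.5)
Your proof is correct and complete. The lower bound uses $\mathbb{P}(X\neq Y)\le\mathbb{E}|X-Y|$ for integer-valued variables; the upper bound uses the Hough--Krishnapur--Peres--Vir\'ag Bernoulli representation with the monotone coupling through common uniforms, followed by Lidskii's majorization $\lambda^{\downarrow}(\mathbf{A})-\lambda^{\downarrow}(\mathbf{B})\prec\lambda(\mathbf{A}-\mathbf{B})$, extended to infinite dimensions by compressions (Weyl's inequality gives pointwise convergence of the ordered eigenvalues and convergence of traces upgrades it to $\ell^1$). The paper itself gives no proof of this lemma and imports it from the cited reference; your argument is the standard proof of that result.
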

\begin{lemma}\label{21in}\cite[Lemma $3.1$]{cai2025mesoscopicratesconvergencehermitian}
    Suppose $\mathbf{A}$, $\mathbf{B}$ and $\mathbf{C}$ are Hilbert--Schmidt operators on a Hilbert space. Then the following inequality holds:
    \begin{equation*}
2\|\mathbf{A}\mathbf{B}+\mathbf{B}\mathbf{A}-\mathbf{C}\mathbf{C}\|_1\le\|\mathbf{A}+\mathbf{B}-\sqrt2\mathbf{C}\|_2\|\mathbf{A}+\mathbf{B}+\sqrt2\mathbf{C}\|_2+\|\mathbf{A}-\mathbf{B}\|^2_2.
    \end{equation*}
      Moreover, for any Hilbert--Schmidt operator $\mathbf{K}$ with the integral kernel $K$, it follows that $\|\mathbf{K}\|_2=\|K\|_{L^2}$.
\end{lemma}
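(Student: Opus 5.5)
The plan is to prove the inequality through an exact algebraic identity, then bound the trace norm using the Hölder inequality for Schatten norms, $\|\mathbf{S}\mathbf{T}\|_1\le\|\mathbf{S}\|_2\|\mathbf{T}\|_2$, valid for Hilbert--Schmidt $\mathbf{S},\mathbf{T}$. Set $\mathbf{X}:=\mathbf{A}+\mathbf{B}$ and $\mathbf{Y}:=\sqrt2\,\mathbf{C}$. The operators need not commute, so $(\mathbf{X}-\mathbf{Y})(\mathbf{X}+\mathbf{Y})$ alone does not equal $\mathbf{X}^2-\mathbf{Y}^2$. The symmetrized product does:
\begin{equation*}
\tfrac12\bigl[(\mathbf{X}-\mathbf{Y})(\mathbf{X}+\mathbf{Y})+(\mathbf{X}+\mathbf{Y})(\mathbf{X}-\mathbf{Y})\bigr]=\mathbf{X}^2-\mathbf{Y}^2,
\end{equation*}
because the cross terms $\pm\mathbf{X}\mathbf{Y}$ and $\pm\mathbf{Y}\mathbf{X}$ cancel.

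Next, expand without assuming commutativity:
\begin{equation*}
\mathbf{X}^2=\mathbf{A}^2+\mathbf{A}\mathbf{B}+\mathbf{B}\mathbf{A}+\mathbf{B}^2, \qquad (\mathbf{A}-\mathbf{B})^2=\mathbf{A}^2-\mathbf{A}\mathbf{B}-\mathbf{B}\mathbf{A}+\mathbf{B}^2 .
\end{equation*}
Subtracting gives $\mathbf{X}^2-(\mathbf{A}-\mathbf{B})^2=2(\mathbf{A}\mathbf{B}+\mathbf{B}\mathbf{A})$, and also $\mathbf{Y}^2=2\mathbf{C}\mathbf{C}$. Combining these,
\begin{equation*}
2(\mathbf{A}\mathbf{B}+\mathbf{B}\mathbf{A}-\mathbf{C}\mathbf{C})=\tfrac12\bigl[(\mathbf{X}-\mathbf{Y})(\mathbf{X}+\mathbf{Y})+(\mathbf{X}+\mathbf{Y})(\mathbf{X}-\mathbf{Y})\bigr]-(\mathbf{A}-\mathbf{B})^2 .
\end{equation*}
Now take trace norms and apply the triangle inequality. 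Use Hölder on each of the two symmetrized products; each is bounded by $\|\mathbf{X}-\mathbf{Y}\|_2\|\mathbf{X}+\mathbf{Y}\|_2$, and with the factor $\tfrac12$ they contribute exactly that amount in total. Also $\|(\mathbf{A}-\mathbf{B})^2\|_1\le\|\mathbf{A}-\mathbf{B}\|_2^2$. This yields the stated inequality.

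For the final claim, take an orthonormal basis $\{e_j\}$ of $L^2(D)$. Then $\{\overline{e_k(y)}e_j(x)\}$ is an orthonormal basis of $L^2(D^2)$. The coefficient of $K$ against it is $\langle \mathbf{K}e_k,e_j\rangle$, so Parseval gives $\|K\|_{L^2}^2=\sum_{j,k}|\langle\mathbf{K}e_k,e_j\rangle|^2=\sum_k\|\mathbf{K}e_k\|^2=\|\mathbf{K}\|_2^2$. The only point needing care is finding the symmetrized identity that avoids commutativity; once that is in hand, everything else is routine.
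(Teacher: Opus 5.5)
Your proof is correct. The identity
\[
2(\mathbf{A}\mathbf{B}+\mathbf{B}\mathbf{A}-\mathbf{C}\mathbf{C})=\tfrac12\bigl[(\mathbf{X}-\mathbf{Y})(\mathbf{X}+\mathbf{Y})+(\mathbf{X}+\mathbf{Y})(\mathbf{X}-\mathbf{Y})\bigr]-(\mathbf{A}-\mathbf{B})^2
\]
holds term by term without commutativity or self-adjointness. The triangle inequality and $\|\mathbf{S}\mathbf{T}\|_1\le\|\mathbf{S}\|_2\|\mathbf{T}\|_2$ then give exactly the stated bound. The paper does not prove this lemma; it imports it from the earlier work it cites. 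Your difference-of-squares argument produces precisely the product form $\|\mathbf{A}+\mathbf{B}-\sqrt2\mathbf{C}\|_2\|\mathbf{A}+\mathbf{B}+\sqrt2\mathbf{C}\|_2$ on the right, so there is no competing in-paper route to compare against.

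The one point to tighten is the second claim. Your Parseval computation presupposes $K\in L^2(D^2)$, and so does the Fubini step identifying the coefficient with $\langle\mathbf{K}e_k,e_j\rangle$. If you start only from ``$\mathbf{K}$ is Hilbert--Schmidt and is given by the kernel $K$'', you also need two facts. First, every Hilbert--Schmidt operator on $L^2(D)$ has the $L^2$ kernel $\sum_{j,k}\langle\mathbf{K}e_k,e_j\rangle e_j(x)\overline{e_k(y)}$. Second, the kernel of an integral operator is determined almost everywhere. In this paper the gap is harmless, because every kernel the lemma is applied to is bounded pointwise by a multiple of $e^{-(x+y-s_0)/2}$ on $[s_0,\infty)^2$, hence is square integrable.
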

Lemma \ref{21in} is useful since the exact computation of the trace-class norm is complicated, whereas estimating the $L^2$ norm of the kernel is more tractable.
Therefore, the work in the proofs boils down to two things: decomposing the operators appropriately so that the inequality in Lemma \ref{21in} can be used, and getting non-asymptotic approximation towards the Airy function (since it serves as the limiting kernel in our setting) to finish the estimates.

The following operator decomposition, first introduced in \cite{tracy1994level} and then extended in \cite{johnstone2001distribution}, plays an important role in our reasoning.
\begin{proposition}\label{LUE fac}
    Let $L_{a,N}$ be an LUE with parameter $a\in\mathbb{N}$ and kernel $K_{L_{a,N}}(x,y)$. Then for $a\ge 2$, the following decomposition holds for the corresponding integral operator $\mathbf{K}_{L_{a,N}}$:
 \begin{equation}\label{LUE int2}
\mathbf{K}_{L_{a,N}}=\mathbf{G}_{a,N}\mathbf{H}_{a,N}+\mathbf{H}_{a,N}\mathbf{G}_{a,N}.
\end{equation}
Thereinto, the operators $\mathbf{G}_{a,N}$ and $\mathbf{H}_{a,N}$ have kernels $\xi_{a,N}(x+y)$ and $\eta_{a,N}(x+y)$ for
\begin{equation}\label{explicit xi and zeta}
    \begin{array}{ll}
   \xi_{a,N}(x):=\frac{(-1)^N}{\sqrt2}\left(N(N+a)\right)^\frac{1}{4}\sqrt{\frac{\Gamma(N+1)}{\Gamma(N+a)}}x^{\frac{a}{2}-1}e^{-\frac{x}{2}}L_{N}^{a-1}(x)\chi_{(0,\infty)}(x),\\
   \\
    \eta_{a,N}(x):=\frac{(-1)^{N-1}}{\sqrt2}\left(N(N+a)\right)^\frac{1}{4}\sqrt{\frac{\Gamma(N)}{\Gamma(N+a+1)}}x^{\frac{a}{2}}e^{-\frac{x}{2}}L_{N-1}^{a+1}(x)\chi_{(0,\infty)}(x)
    \end{array}
\end{equation}
respectively.

Let $\mathbf{K}_{Ai}$ be the integral operator with the Airy kernel defined in \eqref{airy kernel}, then
\begin{equation}\label{Afac}
    \mathbf{K}_{Ai}=\mathbf{Ai}\mathbf{Ai}
\end{equation}
where $\mathbf{Ai}$ has the kernel $Ai(x+y)$.

Moreover, all operators above are Hilbert-Schmidt on $L^2(0,+\infty)$.
\end{proposition}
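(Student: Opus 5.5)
The plan is to prove each factorization by the Tracy--Widom ``diagonal derivative'' argument. For a kernel of the form $F(x,y)=\int_0^\infty \big(f(x+z)g(z+y)+g(x+z)f(z+y)\big)\,dz$, differentiating under the integral gives
\begin{equation*}
(\partial_x+\partial_y)F(x,y)=\int_0^\infty \frac{d}{dz}\big(f(x+z)g(y+z)+g(x+z)f(y+z)\big)\,dz=-\big(f(x)g(y)+g(x)f(y)\big),
\end{equation*}
provided the integrands decay at $+\infty$. If a second kernel $K$ satisfies the same first-order PDE along the diagonal direction, and both $K$ and $F$ tend to $0$ as $t\to+\infty$ along $(x+t,y+t)$, then integrating along this ray gives $K=F$ pointwise. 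The kernel of $\mathbf{G}\mathbf{H}+\mathbf{H}\mathbf{G}$ is exactly such an $F$, with $f=\xi_{a,N}$ and $g=\eta_{a,N}$. In the Airy case we take $f=g=Ai$ and halve.

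\emph{Airy case.} From \eqref{airy kernel} and the Airy equation $Ai''(x)=xAi(x)$, a direct computation for $x\neq y$ gives
\begin{equation*}
(\partial_x+\partial_y)\big(Ai(x)Ai'(y)-Ai'(x)Ai(y)\big)=(y-x)Ai(x)Ai(y),
\end{equation*}
while $(\partial_x+\partial_y)(x-y)^{-1}=0$. Hence $(\partial_x+\partial_y)K_{Ai}(x,y)=-Ai(x)Ai(y)$, and by continuity this extends to the diagonal. On the other side, $\int_0^\infty Ai(x+z)Ai(z+y)\,dz$ satisfies the same equation by the display above. Both kernels vanish along diagonal rays, because $Ai$ and $Ai'$ decay superexponentially at $+\infty$. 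Therefore \eqref{Afac} holds on $L^2(0,\infty)$.

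\emph{LUE case.} I would write $K_{L_{a,N}}$ in the Christoffel--Darboux form \eqref{LUE kernel} and compute $(\partial_x+\partial_y)K_{L_{a,N}}$. The tools are the standard Laguerre identities: $\frac{d}{dx}L_j^a=-L_{j-1}^{a+1}$, $L_j^a=L_j^{a+1}-L_{j-1}^{a+1}$, $L_j^{a-1}=L_j^a-L_{j-1}^a$, and the three-term recurrence. Together these let one express $\psi'_{L_a,N}$ and $\psi'_{L_a,N-1}$ through the functions $x^{a/2-1}e^{-x/2}L_N^{a-1}$ and $x^{a/2}e^{-x/2}L_{N-1}^{a+1}$. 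The aim is to show that the numerator, after differentiation, produces a factor $(x-y)$ that cancels the denominator. The result should be
\begin{equation*}
(\partial_x+\partial_y)K_{L_{a,N}}(x,y)=-\big(\xi_{a,N}(x)\eta_{a,N}(y)+\eta_{a,N}(x)\xi_{a,N}(y)\big),
\end{equation*}
with the constants exactly those in \eqref{explicit xi and zeta}: the factor $(N(N+a))^{1/4}/\sqrt2$ and the Gamma ratios should combine into the prefactor $-\Gamma(N+1)/\Gamma(N+a)$. Both sides then vanish along diagonal rays because of the factor $e^{-x/2}$, so \eqref{LUE int2} follows as in the Airy case.

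This algebraic verification is the main obstacle. The bookkeeping of parameter shifts $a\pm1$ and degrees $N,N-1$ is delicate, and the cancellation of $(x-y)$ must come out exactly. I would first check the identity numerically or for small $N$ to pin down signs. If direct differentiation becomes unwieldy, the fallback is to follow Johnstone's route: derive the identity for the Laguerre kernel from its ladder-operator structure, then specialize.

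\emph{Hilbert--Schmidt property.} For a kernel $h(x+y)$ on $(0,\infty)^2$, the substitution $t=x+y$ gives $\|h(x+y)\|_{L^2}^2=\int_0^\infty t\,|h(t)|^2\,dt$. For $Ai$ this integral is finite by its decay at $+\infty$. For $\eta_{a,N}$ it is finite by the factor $e^{-t/2}$. For $\xi_{a,N}$, near $0$ we have $t|\xi_{a,N}(t)|^2\sim t^{a-1}$, which is integrable, and the hypothesis $a\ge2$ gives ample margin. Hence all the operators are Hilbert--Schmidt. Their products are then trace class, which justifies the use of Fubini when composing kernels and, via Lemma~\ref{21in}, the subsequent trace-norm estimates.
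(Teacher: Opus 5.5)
Your argument is correct and is the standard Tracy--Widom/Johnstone diagonal-derivative proof, which is exactly what the paper relies on: it cites \cite{tracy1994level} and \cite{johnstone2001distribution} for this proposition rather than proving it. The computation you defer does close. With $\psi_j=\psi_{L_a,j}$ one has $x\psi_N'=(N+\frac a2-\frac x2)\psi_N-(N+a)\psi_{N-1}$, $x\psi_{N-1}'=N\psi_N+(\frac x2-N-\frac a2)\psi_{N-1}$, $x^{a/2}e^{-x/2}L_N^{a-1}=\psi_N-\psi_{N-1}$ and $x^{a/2+1}e^{-x/2}L_{N-1}^{a+1}=(N+a)\psi_{N-1}-N\psi_N$; substituting these into the Christoffel--Darboux form gives $(\partial_x+\partial_y)K_{L_{a,N}}(x,y)=-\bigl(\xi_{a,N}(x)\eta_{a,N}(y)+\eta_{a,N}(x)\xi_{a,N}(y)\bigr)$ with precisely the constants in \eqref{explicit xi and zeta}.
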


%% file: 2_Results.tex
\section{Statement of Results}\label{Sec: Results}
The following is the main result proved in this paper.
\begin{theorem}\label{LUE left soft}
   (left soft edge) Consider an LUE $L_{a,N}$ such that $\lim\limits_{N\rightarrow\infty}\frac{N}{n}=\gamma\in(0,1)$ where $n:=N+a$. Let $I\subset[s,+\infty)$ for $s>-\infty$. Define parameters:
    \begin{equation*}
      \begin{array}{ll}
      \mu^L_{k,m}:=\left(\sqrt{k+\frac{1}{2}}-\sqrt{m+\frac{1}{2}}\right)^2,\\
      \sigma^L_{k,m}:=\left(\sqrt{k+\frac{1}{2}}-\sqrt{m+\frac{1}{2}}\right)\left(\left(m+\frac{1}{2}\right)^{-\frac{1}{2}}-\left(k+\frac{1}{2}\right)^{-\frac{1}{2}}\right)^{\frac{1}{3}},
      \end{array}
    \end{equation*}
\begin{equation*}
    \mu_L=\left(\frac{1}{\sqrt{\sigma_{n-1,N}^{L}}}+\frac{1}{\sqrt{\sigma_{n,N-1}^{L}}}\right)\left(\frac{1}{\mu^L_{n-1,N}\sqrt{\sigma_{n-1,N}^L}}+\frac{1}{\mu^L_{n,N-1}\sqrt{\sigma_{n,N-1}^{L}}}\right)^{-1}
\end{equation*}
and
\begin{equation*}
    \sigma_L=\left(\frac{\sqrt{\sigma_{n-1,N}^{L}}}{\mu^L_{n-1,N}}+\frac{\sqrt{\sigma_{n,N-1}^{L}}}{\mu^L_{n,N-1}}\right)\left(\frac{1}{\mu^L_{n-1,N}\sqrt{\sigma_{n-1,N}^{L}}}+\frac{1}{\mu^L_{n,N-1}\sqrt{\sigma_{n,N-1}^{L}}}\right)^{-1}.
\end{equation*}
Let $\mathcal{X}_{L_{a,N}}^{LS}$ denote the left-soft-edge-scaled point process for $L_{a,N}$ with the kernel $K_{L_{a,N}}^{LS}(x,y):=\sigma_{L}K_{L_{a,N}}(t_{L_{N}}(x),t_{L_{N}}(y))$ where $t_{L_N}(x):=\mu_{L}-\sigma_{L}x$ for any $x\in I$. Then on $A\subset I$, there exists $n(s,\gamma)\in\mathbb{N}$ such that for any $N>n(s,\gamma)$,
\begin{equation}\label{main L left soft}
W_1(\mathcal{N}_{L_{a,N}}^{LS},\mathcal{N}_{Ai})\le\frac{g(s,\gamma)e^{-s}}{N^{\frac{2}{3}}}
\end{equation}
where $g(s,\gamma)$ is continuous and non-increasing with respect to $s$ for any fixed $\gamma$.
\end{theorem}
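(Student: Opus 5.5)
Following the paper's strategy, we reduce the estimate to a Hilbert--Schmidt bound. By Lemma \ref{M and M} it suffices to bound $\|\mathbf{K}^{LS}_{L_{a,N}}-\mathbf{K}_{Ai}\|_1$ on $L^2(s,\infty)$; restricting to $A\subset I$ only decreases trace norms, since compressions do not increase them. Under the substitution $t_{L_N}$, Proposition \ref{LUE fac} gives a factorization
\[
\mathbf{K}^{LS}_{L_{a,N}}=\mathbf{G}\mathbf{H}+\mathbf{H}\mathbf{G},
\]
where $\mathbf{G},\mathbf{H}$ have kernels of Hankel type $\sqrt{\sigma_L}\,\xi_{a,N}$ and $\sqrt{\sigma_L}\,\eta_{a,N}$ evaluated at $\mu_L-\sigma_L(x+y)$, after suitable reflection and shifting. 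The reflection $x\mapsto-x$ at the left edge is harmless here, because the kernel depends on $x+y$ and the half-line structure is preserved once we re-center at $\mu_L$. Since $\mathbf{K}_{Ai}=\mathbf{Ai}\,\mathbf{Ai}$, Lemma \ref{21in} with $\mathbf{A}=\mathbf{G}$, $\mathbf{B}=\mathbf{H}$ and $\mathbf{C}=\mathbf{Ai}/\sqrt{2}\cdot\sqrt2$ (properly normalized so that $\mathbf{C}\mathbf{C}=2\mathbf{Ai}\mathbf{Ai}/2$) reduces the problem to the $L^2((s,\infty)^2)$ norms of three kernels:
\[
F_\pm(x+y)=\sqrt{\sigma_L}\,(\xi+\eta)(t_{L_N}(x+y))\mp Ai(x+y),\qquad D(x+y)=\sqrt{\sigma_L}\,(\xi-\eta)(t_{L_N}(x+y)).
\]
It then suffices to show $\|F_-\|_{L^2}\lesssim N^{-2/3}g(s)e^{-s/2}$, $\|F_+\|_{L^2}\lesssim g(s)e^{-s/2}$, and $\|D\|_{L^2}^2\lesssim N^{-2/3}g(s)e^{-s}$. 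For a function $f(x+y)$ these norms reduce to the one-dimensional weighted norm $\int_{2s}^\infty (u-2s)\,f(u)^2\,du$.

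The core is a non-asymptotic Airy approximation for the two Laguerre functions $x^{a/2-1}e^{-x/2}L_N^{a-1}(x)$ and $x^{a/2}e^{-x/2}L^{a+1}_{N-1}(x)$ near the smaller turning point. For each, after the appropriate rescaling with $(\mu^L_{n-1,N},\sigma^L_{n-1,N})$ and $(\mu^L_{n,N-1},\sigma^L_{n,N-1})$ respectively, we would use the Liouville--Green/Olver method as in \cite{el2006rate}. We write the Laguerre ODE in the form $w''=(u^2\hat f(x)+g(x))w$ with the turning point at the left edge, and apply Olver's theorem to get
\[
\sqrt{\sigma^L}\,\phi(\mu^L-\sigma^L x)=Ai(x)+\varepsilon(x),\qquad |\varepsilon(x)|\le C N^{-2/3}e^{-x/2}
\]
uniformly for $x\ge s$, together with a similar bound on derivatives. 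The parameters $\mu_L$ and $\sigma_L$ in the statement are precisely the weighted averages that make the first-order discrepancies between the two separate centerings cancel in $\xi+\eta$. We Taylor expand each rescaled function around the common centering, so the mismatch terms are $O(N^{-2/3})$ with exponential decay, giving the $F_-$ bound. For $D$, the difference $\xi-\eta$ is $O(N^{-1/3})$ times Airy-type terms, which after squaring gives the $N^{-2/3}$ contribution. The $F_+$ bound follows from the bound on $F_-$ plus $\|Ai\|$. Continuity and monotonicity of $g(s,\gamma)$ come from tracking constants as integrals over $[s,\infty)$.

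We expect the main obstacle to be the left-edge Olver analysis. Unlike the right edge, the variable $x$ runs toward $0$ under $t_{L_N}$, so on the region $x\ge s$ the argument $\mu_L-\sigma_L x$ sweeps across the whole interval $(0,\mu_L]$, including near the hard wall at $0$, where $\hat f$ has a pole. The plan is to split $[s,\infty)$ into $s\le x\le cN^{2/3}$, where $\mu_L-\sigma_Lx$ stays a fixed fraction of $\mu_L$ and Olver's error-control function is uniformly $O(N^{-1})$ (with $\gamma<1$ keeping $\mu_L\asymp N$), and the range $x>cN^{2/3}$, where the argument is near $0$. Beyond that threshold the cutoff $\chi_{(0,\infty)}$ kills the kernel. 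Alternatively, the Laguerre function is exponentially small there in both frameworks, and we would bound it crudely by $e^{-cN}$ via the monotone WKB decay, which is dominated by $N^{-2/3}e^{-x}$. The dependence of $n(s,\gamma)$ and $g$ on $\gamma$ enters through the requirement $\gamma<1$ keeping the turning point away from $0$.
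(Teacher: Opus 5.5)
Your architecture matches the paper's: Lemma~\ref{M and M}, the Hankel factorization, Lemma~\ref{21in} with $\mathbf{C}=\mathbf{Ai}$, and Liouville--Green/Olver approximations of the two Laguerre functions at their own centerings. It also shares the choice of $(\mu_L,\sigma_L)$ that kills the $O(N^{-1/3})$ centering mismatch (the paper's $U_N=V_N=0$) and an exponential bound near the hard wall.

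\textbf{The gap.} You claim the reflection is harmless, and it is not. Substituting $x=\mu_L-\sigma_L s$ into $K(x,y)=\int_0^\infty[\xi(x+z)\eta(y+z)+\eta(x+z)\xi(y+z)]\,dz$ produces arguments $\mu_L-\sigma_L(s-u)$, which is not of Hankel type. To recover Hankel form you must integrate $(\partial_x+\partial_y)K(x,y)=-[\xi(x)\eta(y)+\eta(x)\xi(y)]$ toward the hard wall, where everything vanishes. This gives $K(x,y)=-\int_0^\infty[\xi(x-w)\eta(y-w)+\eta(x-w)\xi(y-w)]\,dw$, and the overall minus sign is why the paper sets $H_\tau=-\eta_\tau$.

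This sign is decisive, not cosmetic. The standard-normalized $L_N^{a-1}$ and $L_{N-1}^{a+1}$ are positive to the left of their smallest zeros. So on $[s,\infty)$ one has $\xi_\tau\approx(-1)^N Ai/\sqrt2$ but $\eta_\tau\approx(-1)^{N-1}Ai/\sqrt2$. With your kernels $\xi,\eta$:
\begin{itemize}
\item $\xi+\eta$ cancels at leading order, so both $F_\pm$ are of order one;
\item $D\propto\xi-\eta\approx\pm\sqrt2\,Ai$ is also of order one.
\end{itemize}
None of your three target bounds holds, and Lemma~\ref{21in} only yields $O(1)$. Consistently, your right-hand side $\mathbf{G}\mathbf{H}+\mathbf{H}\mathbf{G}$ would be close to $-\mathbf{Ai}^2$, a negative operator, so the identity as written cannot hold.

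\textbf{After the fix.} Once the sign is corrected, $\mathbf{A}+\mathbf{B}$ has kernel $\xi_\tau-\eta_\tau$ and $\mathbf{A}-\mathbf{B}$ has kernel $\xi_\tau+\eta_\tau=O(N^{-1/3})$. The mismatch cancellation must then be carried out in $\xi_\tau-\eta_\tau$, i.e.\ in the sum of the two positive Airy approximants, and your plan goes through.

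\textbf{Parity.} You also miss that $\mathbf{A}+\mathbf{B}\to(-1)^N\sqrt2\,\mathbf{Ai}$. The left-edge L--G constant $c_N^{\{1\}}$ is positive and, unlike at the right edge, carries no $(-1)^N$ to absorb the signs in \eqref{explicit xi and zeta}. Hence which of $\|\mathbf{A}+\mathbf{B}\mp\sqrt2\,\mathbf{Ai}\|_2$ is $O(N^{-2/3})$ alternates between odd and even $N$. This is harmless because Lemma~\ref{21in} only uses their product, but your fixed-sign claims about $F_\pm$ must be split by parity, as the paper does.

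\textbf{Smaller slips.}
\begin{itemize}
\item Each Hankel factor carries $\sigma_L$, not $\sqrt{\sigma_L}$, and the comparison term is $\sqrt2\,Ai$, not $Ai$.
\item The factors on $L^2(s,\infty)$ need shifted kernels $f(x+y-s)$ for the composition to reproduce $\mathbf{K}^{LS}$, so the relevant weight is $\int_s^\infty(u-s)f(u)^2\,du$.
\item The cutoff $\chi_{(0,\infty)}$ acts only beyond $\mu_L/\sigma_L$. On $[cN^{2/3},\mu_L/\sigma_L]$ your exponential-smallness argument (the paper's Lemma~\ref{inlarge}) is therefore required, not an alternative.
\end{itemize}
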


\bigskip\noindent\textbf{ Remark.} The scaling we used in Theorem \ref{LUE left soft} closely mirrors that in Theorem \ref{LUE right soft}, first introduced in \cite{el2006rate}, due to the similar L–G method employed in the large 
$N$ approximation. However, unlike Theorem \ref{LUE right soft}, the function
$g(s,\gamma)$ in Theorem \ref{LUE left soft} depends on the limiting ratio $\gamma$. This difference arises because, in our estimation of the least eigenvalue, we are unable to construct a uniform bound with respect to $\gamma$ in the same way as we can for the largest eigenvalue in Theorem \ref{LUE right soft}. A more detailed explanation will be provided later.\bigskip

Theorem \ref{LUE left soft} also provides us with a convergence rate for the least eigenvalue $\lambda_{\min}$ of an LUE towards the Tracy--Widom distribution as following. 
 \begin{corollary}\label{smallessoft}
    Let $L_{a,N}$ be an LUE with parameters satisfying $\lim\limits_{N\rightarrow\infty}\frac{N}{N+a}=\gamma\in(0,1)$. Suppose $\lambda_{min}^N$ is the least eigenvalue of the $L_{a,N}$ and $TW_2(s)$ is the distribution function of the least eigenvalue of an Airy point process. Let $\tilde{\lambda}^N_{min}:=\frac{\lambda_{min}^N+\mu_L}{\sigma_L}$ be the left-soft-edge-scaled least eigenvalue of $L_{a,N}$. Then for large $N$,
    \begin{equation*}
        |\mathbb{P}\{-\tilde{\lambda}_{min}^N\le s\}-TW_2(s)|\le \frac{g(s,\gamma)e^{-s}}{N^{\frac{2}{3}}}.
    \end{equation*}
    where $g$ is the same function as in Theorem \ref{LUE left soft}.
\end{corollary}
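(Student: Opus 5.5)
The statement to prove is Corollary \ref{smallessoft}. The plan is to reduce it directly to Theorem \ref{LUE left soft} with $A=I=[s,+\infty)$, using the fact that the distribution function of an extreme point is a gap probability.

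\textbf{Step 1: the event on the scaled process.} The rescaled process $\mathcal{X}^{LS}_{L_{a,N}}$ consists of the points $x$ with $t_{L_N}(x)=\mu_L-\sigma_L x$ an eigenvalue of $L_{a,N}$. The map $t_{L_N}$ reverses orientation, so the least eigenvalue $\lambda^N_{min}$ corresponds to the largest point of $\mathcal{X}^{LS}_{L_{a,N}}$. This point is $(\mu_L-\lambda_{min}^N)/\sigma_L$, which is the scaled quantity $-\tilde\lambda^N_{min}$ of the corollary, up to the sign convention written there. The kernel $\sigma_L K_{L_{a,N}}(t_{L_N}(x),t_{L_N}(y))$ is exactly the correlation kernel of the pushforward, since $|dt_{L_N}/dx|=\sigma_L$. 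Therefore
\begin{equation*}
\mathbb{P}\{-\tilde\lambda^N_{min}\le s\}=\mathbb{P}\{\mathcal{N}^{LS}_{L_{a,N}}((s,\infty))=0\}.
\end{equation*}
Likewise, $TW_2(s)$, understood as the law of the extreme point of the Airy process, equals $\mathbb{P}\{\mathcal{N}_{Ai}((s,\infty))=0\}$. Whether the interval is open or closed at $s$ makes no difference, because both point processes have absolutely continuous one-point intensities.

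\textbf{Step 2: compare via total variation.} For integer-valued random variables $X,Y$,
\begin{equation*}
|\mathbb{P}\{X=0\}-\mathbb{P}\{Y=0\}|\le d_{TV}(X,Y)\le W_1(X,Y),
\end{equation*}
which is the first inequality of Lemma \ref{M and M}. Apply this with $X=\mathcal{N}^{LS}_{L_{a,N}}([s,\infty))$ and $Y=\mathcal{N}_{Ai}([s,\infty))$, and then invoke \eqref{main L left soft} with $I=A=[s,\infty)$. This gives the bound $g(s,\gamma)e^{-s}N^{-2/3}$ for $N>n(s,\gamma)$, with the same $g$ as in Theorem \ref{LUE left soft}.

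The only point needing care is Step 1. One must check that the orientation reversal in $t_{L_N}$ and the sign conventions in the definitions of $\tilde\lambda^N_{min}$ and $TW_2$ match, so that the event is precisely the vanishing of a count on a half-line $[s,\infty)\subset I$. One must also confirm that the trace-class hypotheses of Lemma \ref{M and M} hold on this half-line. Both of these are already built into the statement of Theorem \ref{LUE left soft}. Once they are checked, the corollary follows immediately.
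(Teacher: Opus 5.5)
Your proposal is correct and is essentially the paper's own proof. Like the paper, you rewrite the distribution function as the gap probability $\mathbb{P}\{\mathcal{N}^{LS}_{L_{a,N}}((s,\infty))=0\}$, bound the difference by $d_{TV}\le W_1$ of the counts on the half-line, and apply Theorem~\ref{LUE left soft}.

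Your identification of the largest scaled point as $(\mu_L-\lambda^N_{min})/\sigma_L$ is the right one. It corresponds to $\tilde\lambda^N_{min}=(\lambda^N_{min}-\mu_L)/\sigma_L$, so the ``$+\mu_L$'' in the statement is a sign slip; the paper repeats it in the identity $-\lambda^N_{min}=\mu_L-\sigma_L\tilde\lambda^N_{min}$ in its proof. Your reading repairs this, and it must: taken literally, $\mathbb{P}\{-\tilde\lambda^N_{min}\le s\}=1$ for large $N$.
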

\begin{proof}
    Notice $-\lambda_{min}^N=\mu_L-\sigma_L\tilde{\lambda}_{min}^N$, which matches the explicit scaling in Theorem \ref{LUE left soft}, then \eqref{main L left soft} applies. The same as the previous proof, it follows that
   \begin{equation*}
\begin{split}
     |\mathbb{P}\{-\tilde{\lambda}_{min}^N\le s\}-TW_2(s)|&=|\mathbb{P}\{\mathcal{N}_{L_{a,N}}^{LS}((s,\infty))=0\}-\mathbb{P}\{\mathcal{N}_{Airy}((s,\infty))=0\}|
     \\&\le d_{TV}(\mathcal{N}_{L_{a,N}}^{LS}((s,\infty)),\mathcal{N}_{Airy}((s,\infty)))
     \\&\le W_1(\mathcal{N}_{L_{a,N}}^{LS}((s,\infty)),\mathcal{N}_{Airy}((s,\infty)))\\&\le\frac{g(s,\gamma)e^{-s}}{N^\frac{2}{3}}
\end{split}
\end{equation*}
where $g$ is the same function as in Theorem \ref{LUE left soft}.
\end{proof}
The distribution $TW_2(s)$, with an explicit form given in \cite{tracy1994level}, is the famous \textbf{unitary~Tracy--Widom~distribution}. The result in Corollary \ref{smallessoft} actually covers the ROC, with a better spacial function $g(s,\gamma)$, derived by Ma in \cite[Theorem $2$]{ma_accuracy_2012}. Moreover, the negative sign in this corollary indicates the fact that after proper scaling, the point process derived by the eigenvalues of $L_{a,N}$ around the left soft edge converges to the symmetric Airy point process, which slightly differs from the right soft one. Similar asymptotics can be seen in \cite{ma_accuracy_2012} in the same setting and \cite{DePaul-asyp} for $\gamma=0$.

\subsection{Comparison with Other Results}
Unlike a microscopic analysis, which is confined to a fixed, local scale, our approach permits the scaling parameter $s$ to change, subject to asymptotic conditions that ensure we remain within the mesoscopic regime. This dynamic scaling covers some the rigid setting of microscopic limits, for example, Forrester's discussion for the convergence of the kernel functions in \cite{log-gas}, and provides a powerful tool for probing the spatial structure of the point process across different scales. Indeed, aside from extreme values of the parameter (as the small $s$ for the left hard edge and large $s$ for the left soft edge for the LUE), we cannot expect our results to be optimal in the spatial parameter $s$, as the estimates used in the proof are not sharp. However, for any fixed $s$, we conjecture that our results are optimal with respect to the dimension $N$.

The method we use relies heavily on the auxiliary variable introduced by the Liouville--Green transform. Following the analysis in the proof, it appears unrealistic to expect a rate higher than $N^{\frac{2}{3}}$, as the power $\frac{2}{3}$ is intrinsic in our L--G transform around the least eigenvalue (in fact, it is also necessary around the largest one as in \cite{el2006rate}). As a result, it seems very likely that the order $N^{-\frac{2}{3}}$ is optimal in this case.

As indicated in Corollary \ref{smallessoft}, the mesoscopic results around the left edge of the LUE imply a the ROC for the distribution of its least eigenvalue. However, in contrast to the right edge behavior studied in \cite{el2006rate}, the behavior of the left edge varies a lot according to the limiting value of the ratio $\frac{N}{N+a}$.

%% file: 3_Proof.tex
\section{Proofs}\label{Proof}
Similar to the method used in \cite{el2006rate} and \cite{johnstone2001distribution},
in this section we aim to derive the convergence rate by comparing the large $N$ expansion of $\xi$ and $\eta$ (perhaps with different forms they are defined in \eqref{LUE int2}) with the Airy function $Ai(x)$. Following the same philosophy of analyzing $\xi$ and $\eta$ in different regimes in \cite{el2006rate} and \cite{johnstone2001distribution}, here we argue in a different and more complex direction. As with the right edge, given that $\frac{N}{N+a}\rightarrow\gamma\in(0,1)$, we need to establish a scaling $\tau$ for the left edge of $L_{a,N}$ to make a concrete comparison. Recall that scaling $\tau$ for the left soft edge depends both on the expected center $\mu$ and deviation $\sigma$. We begin our argument with the construction of $\mu$ and $\sigma$, based on the following Liouville--Green's method, introduced by Olver in \cite{olver1997asymptotics} and applied by El Karoui \cite{el2006rate} and Johnstone \cite{johnstone2001distribution} to their LUE right edge estimation.

\subsection{Liouville--Green's Method}\label{sec: L-G method}
We denote $N+a$ as $n$ in the following. Given an $LUE$ $L_{a,N}$, introduce parameters $\lambda_N:=\frac{a}{2}$, $\kappa_N:=N+\frac{a+1}{2}$ and $\omega_N:=\frac{2\lambda_N}{\kappa_N}$, then the function $w_N(y)=y^{\frac{a+1}{2}}e^{-\frac{y}{2}}L^a_N(y)=y^\frac{1}{2}f_{a,N}(y)$, which serves as the primary term in our $\xi$ and $\eta$ in \eqref{LUE int2}, satisfies:
\begin{equation}\label{Lequa}
    \frac{d^2w_N(y)}{dy^2}=\left(\frac{1}{4}-\frac{\kappa_N}{y}+\frac{\lambda_N^2-\frac{1}{4}}{y^2}\right)w_N(y)
\end{equation}
according to the Laguerre recurrences in \cite[Chapter $5.1$]{szego1975orthogonal}. After changing the variable: $z:=\frac{y}{\kappa_N}$, \eqref{Lequa} can be written as:
\begin{equation}\label{W-equa1}
    \frac{d^2w_N(z)}{dz^2}=\left(\kappa_N^2f(z)+g(z)\right)w_N(z)
\end{equation}
with
\begin{equation*}
    f(z)=\frac{(z-z_1)(z-z_2)}{4z^2},~~g(z)=-\frac{1}{4z^2}.
\end{equation*}
Here $z_1=2-\sqrt{4-\omega_N^2}$ and $z_2=2+\sqrt{4-\omega_N^2}$ are called \textbf{turning points} of $f$. As $\omega_N\rightarrow2\left(\frac{1-\gamma}{1+\gamma}\right)$, the parameter $\omega_N$ is in $[\delta,4-\delta]$ for some $\delta>0$ when $N$ is large.

Restrict attention to the equation:
\begin{equation}\label{W-equa1tr}
    \frac{d^2w(z)}{dz^2}=\left(\kappa_N^2f(z)+g(z)\right)w(z)
\end{equation}
with $f$ and $g$ given before. It follows that $w_N(\kappa_Nz)$ is a solution of \eqref{W-equa1tr} from the previous scaling.

The philosophy of the L--G method is to introduce a new variable $\zeta$ in order to further rewrite \eqref{W-equa1tr} as a classical differential equation for $\zeta$. To this end, we aim for $\zeta$ to satisfy $\zeta(\zeta')^2=f$, where the prime denotes differentiation with respect to $z$. This condition ensures that the term involving $f$ in \eqref{W-equa1tr} is killed. The following proposition provides a construction of such a $\zeta$.
\begin{proposition}\label{L-G Trans}
   Given the differential equation \eqref{W-equa1tr}, let $I_1=(a_1,b_1)$, $I_2=(a_2,b_2)$ be two intervals containing exactly one turning point of $f$, say, $z_1\in I_1$ and $z_2\in I_2$. Define $\zeta$ by
\begin{equation}\label{LG1}
\left\{
\begin{array}{ll}
  \frac{2}{3}\zeta^{\frac{3}{2}}(z)= \int_{z}^{z_1}f^{\frac{1}{2}}(t)dt,~z\le z_1\\\\
  \frac{2}{3}(-\zeta)^\frac{3}{2}(z)=\int_{z_1}^z(-f)^\frac{1}{2}(t)dt,~z>z_1
\end{array}
 \right. 
\end{equation}
on $I_1$, and similarly,
\begin{equation}\label{LG2}
\left\{
\begin{array}{ll}
  \frac{2}{3}\zeta^{\frac{3}{2}}(z)= \int_{z_2}^{z}f^{\frac{1}{2}}(t)dt,~z>z_2\\\\
  \frac{2}{3}(-\zeta)^\frac{3}{2}(z)=\int_{z}^{z_2}(-f)^\frac{1}{2}(t)dt,~z\le z_2
\end{array}
 \right. 
\end{equation}
on $I_2$. Then for any $z\in I_j$ where $j\in\{1,2\}$, $\zeta':=\frac{d\zeta}{dz}$ is positive when $z>z_j$ and negative when $z<z_j$. Moreover, for any $z\in I_j$, define $W:=(\zeta')^{\frac{1}{2}}w$ when $z>z_j$ and $W:=(-\zeta')^\frac{1}{2}w$ when $z\le z_j$. Further define $\tilde{f}:=\frac{f}{\zeta}$, which equals $(\zeta')^2$. Then the previous equation \eqref{W-equa1tr} can be expressed as:
\begin{equation}\label{W-equa2}
    \frac{d^2W(\zeta)}{d\zeta^2}=\left(\kappa_N^2\zeta+\Psi(\zeta)\right)W(\zeta)
\end{equation}
where
\begin{equation*}
  \Psi(\zeta)=\tilde{f}^{-\frac{1}{4}}\frac{d^2}{d\zeta^2}\left(\tilde{f}^{\frac{1}{4}}\right)+\frac{g}{\tilde{f}}.
\end{equation*}
This $\zeta$ is called the \textbf{Liouville--Green~transformation} of $f$.
\end{proposition}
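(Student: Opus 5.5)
This is the standard Liouville--Green (Olver) computation, and I would prove it by direct verification.

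\textbf{Sign of $\zeta'$.} On $I_1$, for $z<z_1$ we have $f>0$. This holds because $f(z)=(z-z_1)(z-z_2)/(4z^2)$ is positive outside $[z_1,z_2]$ and negative between them. Differentiating $\frac23\zeta^{3/2}=\int_z^{z_1}f^{1/2}$ gives $\zeta^{1/2}\zeta'=-f^{1/2}$. Hence $\zeta>0$ and $\zeta'<0$ there. For $z>z_1$ (but inside $I_1$, so $z<z_2$), differentiating $\frac23(-\zeta)^{3/2}=\int_{z_1}^z(-f)^{1/2}$ gives $-(-\zeta)^{1/2}\zeta'=(-f)^{1/2}$. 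This is negative, so $\zeta<0$ and $\zeta'<0$.

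I note that the signs here are actually opposite to the stated ones for $I_1$: $\zeta$ is monotone decreasing through $z_1$. Squaring in either case gives $\zeta(\zeta')^2=f$, so $\tilde f=f/\zeta=(\zeta')^2$. This is the relation the rest of the argument uses. I would therefore read the proposition's sign convention on $I_1$ with the orientation reversed, and take the absolute value $|\zeta'|$ in the definition of $W$. The analogous computation on $I_2$ gives $\zeta'>0$ on both sides of $z_2$.

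\textbf{Regularity at the turning point.} Since $f$ has a simple zero at $z_j$, write $f(t)=(t-z_j)h(t)$ with $h(z_j)\neq0$. The integrals then behave like $c|z-z_j|^{3/2}(1+O(z-z_j))$. So $\zeta$ is an analytic function of $z$ near $z_j$ with $\zeta'(z_j)\neq0$, and $\zeta'$ keeps a constant sign on $I_j$. This makes $W$ and $\tilde f$ smooth and nonvanishing.

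\textbf{Transforming the equation.} Let $\dot{}$ denote $d/d\zeta$, and set $W=|\zeta'|^{1/2}w$, i.e.\ $w=\tilde f^{-1/4}W$. Using $d/dz=\zeta'\,d/d\zeta$, the standard change of variables for $w''=(\kappa_N^2f+g)w$ gives
\begin{equation*}
\ddot W=\Big(\frac{\kappa_N^2 f+g}{(\zeta')^2}+\tilde f^{-1/4}\frac{d^2}{d\zeta^2}\tilde f^{1/4}\Big)W.
\end{equation*}
Since $f/(\zeta')^2=\zeta$ and $g/(\zeta')^2=g/\tilde f$, this is exactly \eqref{W-equa2} with the stated $\Psi$.

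The check of this last identity is routine. One writes $w=\phi W$ with $\phi=(\zeta')^{-1/2}$ and expands $w''=\phi''W+2\phi'\zeta'\dot W+\phi(\zeta''\dot W+(\zeta')^2\ddot W)$. The first-order terms cancel because $2\phi'\zeta'+\phi\zeta''=0$. The remaining term $\phi''/(\phi(\zeta')^2)$ is rewritten in $\zeta$-derivatives as $-\tilde f^{-1/4}\frac{d^2}{d\zeta^2}\tilde f^{1/4}$, using the identity $\phi\,\frac{d^2}{d\zeta^2}\phi^{-1}=-\phi''/(\phi(\zeta')^2)$.

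The only delicate point is this sign and orientation bookkeeping near $z_1$, together with the analyticity of $\zeta$ at the turning point. The algebra itself is Olver's classical computation.
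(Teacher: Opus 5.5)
Your proposal is correct and follows essentially the same route as the paper. Both verify the Liouville--Green transformation by direct chain-rule computation. The paper computes $d^2W/d\zeta^2$ from $W=(-\zeta')^{1/2}w$ on $z\le z_1$ and matches the result with $\tilde f^{-1/4}\frac{d^2}{d\zeta^2}\tilde f^{1/4}$, while you expand $w=\phi W$ in $z$ and use $2\phi'\zeta'+\phi\zeta''=0$. Your sign observation is also right: $\zeta'<0$ on all of $I_1$ and $\zeta'>0$ on all of $I_2$, so the stated sign pattern fails on one side of each turning point. The paper's proof only carries out the case $z\le z_1$, where its choice $W=(-\zeta')^{1/2}w$ agrees with your $|\zeta'|^{1/2}$ convention.
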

\begin{proof}
    The sign of $\zeta'$ and the identity $\zeta(\zeta')^2=f$ are straightforward by the chain rule according to \eqref{LG1} and \eqref{LG2}. With respect to $\zeta$, since $\frac{dw_(z)}{d\zeta}=\frac{dw_(z)}{dz}\frac{dz}{d\zeta}=\frac{w'}{\zeta'}$, it follows that
    \begin{equation}\label{W-equa1'}
            \frac{d^2 w}{d\zeta^2}=\frac{d}{d\zeta}\left(\frac{{w}'}{\zeta'}\right)=\frac{d}{d z}\left(\frac{{w}'}{\zeta'}\right)\frac{dz}{d\zeta}=\frac{w''\zeta'-w'\zeta''}{(\zeta')^3}=\frac{(\kappa_N^2f+g)w}{(\zeta')^2}-\frac{\zeta''}{(\zeta')^2}\frac{dw}{d\zeta}
    \end{equation}
    by \eqref{W-equa1tr}. Now consider the equation for $W$. We only prove the case where $z\le z_1$ as the method applies to other regions by simply substituting $-\zeta'$ by $\zeta'$ if necessary. Since
    \begin{equation*}
        \frac{dW}{d\zeta}=\frac{d\left(\sqrt{-\zeta'}\right)}{dz}\frac{dz}{d\zeta}w+\sqrt{-\zeta'}\frac{dw}{d\zeta}=\frac{\zeta''}{2(\sqrt{-\zeta'})^3}w+\sqrt{-\zeta'}\frac{dw}{d\zeta},
    \end{equation*}
   the equation \eqref{W-equa1'} yields:
    \begin{equation}\label{W-equa2'}
        \begin{split}
            \frac{d^2W}{d\zeta^2}&=\frac{d}{dz}\left(\frac{\zeta''}{2(-\zeta')^\frac{3}{2}}\right)\frac{dz}{d\zeta}w+\frac{\zeta''}{2(-\zeta')^\frac{3}{2}}\frac{dw}{d\zeta}+\frac{d\sqrt{-\zeta'}}{dz}\frac{dz}{d\zeta}\frac{dw}{d\zeta}+\sqrt{-\zeta'}\frac{d^2w}{d\zeta^2}\\
            &=\left(\frac{-\zeta'''}{2(-\zeta')^\frac{5}{2}}-\frac{3(\zeta'')^2}{4(-\zeta')^\frac{7}{2}}\right)w+\frac{\zeta''}{(-\zeta')^\frac{3}{2}}\frac{dw}{d\zeta}+\frac{(\kappa^2_Nf+g)w}{(-\zeta')^\frac{3}{2}}-\frac{\zeta''}{(-\zeta')^\frac{3}{2}}\frac{dw}{d\zeta}\\
            &=\left(\kappa_N^2\zeta+\frac{g}{\tilde{f}}+\frac{\zeta'''}{2(\zeta')^3}-\frac{3(\zeta'')^2}{4(\zeta')^4}\right)W
        \end{split}
    \end{equation}
    where the fact $\frac{f}{(\zeta')^2}=\zeta$ has been used in the third line. Noting that $\tilde{f}^\frac{1}{4}=\sqrt{-\zeta'}$, from the computations in \eqref{W-equa2'}, it is easy to see
    \begin{equation*}
        \begin{split}
            \tilde{f}^{-\frac{1}{4}}\frac{d^2}{d\zeta^2}\left(\tilde{f}^\frac{1}{4}\right)=\frac{1}{\sqrt{-\zeta'}}\frac{d^2(\sqrt{-\zeta'})}{d\zeta^2}=\frac{1}{\sqrt{-\zeta'}}\frac{d}{dz}\left(\frac{\zeta''}{2(-\zeta')^\frac{3}{2}}\right)\frac{1}{\zeta'}=\frac{\zeta'''}{2(\zeta')^3}-\frac{3(\zeta'')^2}{4(\zeta')^4}
        \end{split}
    \end{equation*}
    and hence, \eqref{W-equa2'} is exactly the same as \eqref{W-equa2}.
\end{proof}

Given the fact that our focus is only on the extreme eigenvalues of the LUE, it suffices for our purposes to consider the local existence of $\zeta$ around $z_1$ and $z_2$, which correspond the least and largest eigenvalue, respectively. In other words, Proposition \ref{L-G Trans} directly provides us with the expansion around the extreme eigenvalues via the solution derived from $W$ below. For convention, we simply set $I_1=(0,2)$ and $I_2=(2,\infty)$ in our argument.

Removing $\Psi$ in \eqref{W-equa2} leads to a new equation:
\begin{equation}\label{W-equa3}
    \frac{d^2W(\zeta)}{d\zeta^2}=\kappa^2_N\zeta W(\zeta).
\end{equation}
Define $t:=\kappa_N^\frac{2}{3}\zeta$. Then \eqref{W-equa3} is equivalent to:
\begin{equation*}
    \frac{d^2W(t)}{dt^2}=\frac{d^2W}{d\zeta^2}\kappa_N^{-\frac{4}{3}}=\kappa_N^\frac{2}{3}\zeta W=tW(t),
\end{equation*}
which is exactly the Airy equation with two independent solutions $Ai(t)$ and $Bi(t)$. Since only $Ai$ vanishes at $\infty$, which matches the behavior of our target---the Laguerre function, it is reasonable to directly exclude $Bi(t)$ in our expansion. Consequently, the solution of \eqref{W-equa2} can be written as $W_2(\zeta)=Ai(t)+\tilde{{\epsilon}}_2(t)=Ai\left(\kappa_N^\frac{2}{3}\zeta\right)+\epsilon_2(\zeta)$ for some function $\epsilon_2$ controlled by $\Psi$ (explicit bounds will be given later).
Further notice that $w_2(\kappa_N,z)=\tilde{f}^{-\frac{1}{4}}W_2(\kappa_N,z)$ and $w_N(\kappa_Nz)$ satisfy the same differential equation \eqref{W-equa1tr}. Thus, $w_2$ must be proportional to $w_N(\kappa_Nz)$:
\begin{equation}\label{w2}
   w_N(\kappa_Nz)=c^{\{k\}}_{N}w_2(\kappa_N,z)=c^{\{k\}}_{N}\tilde{f}^{-\frac{1}{4}}(z)\left(Ai\left(\kappa_{N}^{\frac{2}{3}}\zeta^{\{k\}}\right)+\epsilon_2(\kappa_N,z)\right)
\end{equation}
for a constant $c^{\{k\}}_{N}$, which will be computed later, and L--G transform $\zeta^{\{k\}}$ corresponding to different turning points $z_k$. The identity \eqref{w2} illustrates the possibility of large $N$ expansion for $w_N$ towards $Ai$, but only around turning points $z_1$ and $z_2$. Consequently, a natural choice of the $\mu_k$, the expected center of the expansion, is simply $\kappa_N z_k$. Straightforward algebra yields:
\begin{equation*}
    \begin{split}
       \mu_1&=\kappa_Nz_1=(n+N+1)\frac{2-\sqrt{4-\omega_N^2}}{2}=\left(\sqrt{n+\frac{1}{2}}-\sqrt{N+\frac{1}{2}}\right)^2
    \end{split}
\end{equation*}
and similarly, $\mu_2=\left(\sqrt{n+\frac{1}{2}}+\sqrt{N+\frac{1}{2}}\right)^2$.

Now we focus on the deviation $\sigma_k$ around different $z_k$. Notice if we set $\sigma_k=\frac{\kappa_N^{\frac{1}{3}}}{\zeta'(z_k)}$, then
\begin{equation*}
    \kappa_N^{\frac{2}{3}}\zeta(z)=\kappa_N^{\frac{2}{3}}\zeta\left(z_k+\sigma_i\kappa_N^{-1}s\right)\sim\sigma_i\kappa_N^{-\frac{1}{3}}\zeta'(z_k)s=s
\end{equation*}
when $N$ is large since the turning points satisfy $\zeta(z_k)=0$. Consequently, a concrete comparison is possible since the induced Airy function in \eqref{w2} satisfies $Ai\left(\kappa_{N}^{\frac{2}{3}}\zeta\right)\rightarrow Ai(s)$ for suitable $s$ by continuity. This observation motivates our construction of $\sigma_k$. We now proceed to compute $\sigma_1$ and $\sigma_2$. Taking the derivative of \eqref{LG1} directly, we see $\zeta'(z)=\frac{-1}{2z}\left(\frac{(z-z_1)(z-z_2)}{\zeta(z)}\right)^{\frac{1}{2}}$
and hence, $ -\zeta'(z_1)=\frac{1}{2z_1}\left(\frac{z_1-z_2}{\zeta'(z_1)}\right)^{\frac{1}{2}}$ by l'H{\^o}pital's rule. Then it follows that
\begin{equation*}
    \zeta'(z_1)=\left(\frac{z_1-z_2}{4z_1^2}\right)^{\frac{1}{3}}.
\end{equation*}
Plugging $z_k=\frac{\mu_k}{\kappa_N}$ back, we see
\begin{equation*}
    \sigma_1=-\left(\sqrt{n+\frac{1}{2}}-\sqrt{N+\frac{1}{2}}\right)\left(\frac{1}{\sqrt{N+\frac{1}{2}}}-\frac{1}{\sqrt{n+\frac{1}{2}}}\right)^{\frac{1}{3}}
\end{equation*}
and similarly,
\begin{equation*}
    \sigma_2=\left(\sqrt{n+\frac{1}{2}}+\sqrt{N+\frac{1}{2}}\right)\left(\frac{1}{\sqrt{n+\frac{1}{2}}}+\frac{1}{\sqrt{N+\frac{1}{2}}}\right)^{\frac{1}{3}}.
\end{equation*}
The center parameters $\mu_1,~\mu_2$ and deviation parameters $\sigma_1,~\sigma_2$ above match the choice in \cite{johnstone2001distribution}. For convention, we replace $\sigma_1$ by $-\sigma_1$ to maintain the consistency in sign. Therefore, the left-soft-edged scaling is expressed as $\mu_1-\sigma_1s$, whereas the right-soft-edge scaling takes the form $\mu_2+\sigma_2s$. Recalling that the focus of this section is on the left edge, all of the $\mu_{j,k}$'s and $\sigma_{j,k}$'s below are actually $\mu_1$ and $\sigma_1$ with parameters $j,~k\in\mathbb{N}$. Specifically, they are defined as:
\begin{equation*}
    \begin{array}{ll}
        \mu_{j,k}:=\left(\sqrt{j+\frac{1}{2}}-\sqrt{k+\frac{1}{2}}\right)^2,\\
        \\
        \sigma_{j,k}:=\left(\sqrt{j+\frac{1}{2}}-\sqrt{k+\frac{1}{2}}\right)\left(\frac{1}{\sqrt{k+\frac{1}{2}}}-\frac{1}{\sqrt{j+\frac{1}{2}}}\right)^{\frac{1}{3}}.
    \end{array}
\end{equation*}

\subsection{Operator Decomposition and Rescaling}
Now we turn to the decomposition of $\mathbf{K}_{L_{a,N}}^{LS}$ (the scaled operator around the left edge) based on \eqref{LUE int2}. Recall our goal is to establish large $N$ expansions for $\xi$ and $\eta$ in \eqref{explicit xi and zeta}.
Further define
\begin{equation*}
    F_{n,N}(x):=\sigma_{n,N}^{-\frac{1}{2}}\sqrt{\frac{N!}{n!}}x^{\frac{a+1}{2}}e^{-\frac{z}{2}}L_N^{a}(x)
\end{equation*}
to be the normalization of $w_N$ given in \eqref{Lequa},
then the kernels in \eqref{explicit xi and zeta} take the form:
\begin{equation*}
\begin{split}
        &\xi_{a,N}(x)=(-1)^N\sqrt{\frac{Nn}{2}}\sigma_{n,N-1}^{\frac{1}{2}}F_{n-1,N}(x)\frac{1}{x},\\
        &\eta_{a,N}(x)=(-1)^{N-1}\sqrt{\frac{Nn}{2}}\sigma_{n,N-1}^{\frac{1}{2}}F_{n,N-1}(x)\frac{1}{x}.
\end{split}
\end{equation*} 
Consequently, our aim is to perform the large $N$ expansion for:
\begin{equation*}
    F_{n,N}(x)=\sqrt{\frac{N!}{n!\sigma_{n,N}}}w_N(x)=r^{\{k\}}_{N}\left(\frac{\kappa_N}{\sigma_{n,N}^3}\right)^\frac{1}{6}w_2(\kappa_N,z)
\end{equation*}
where $x=\kappa_Nz$ according to \eqref{w2}. In addition, the previous construction by the L--G method indicates that $(\mu_{n,N},\sigma_{n,N})$ is a standard scaling pair with respect to $F_{n,N}$. Similarly, this fact implies the standard pair $(\mu_{n-1,N},\sigma_{n-1,N})$ for $\xi_{a,N}$ and $(\mu_{n,N-1},\sigma_{n,N-1})$ for $\eta_{a,N}$. This inconsistency will prevent us from the promised order as in Theorem \ref{main L left soft} if we directly use either $(\mu_{n-1,N},\sigma_{n-1,N})$ or $(\mu_{n,N-1},\sigma_{n,N-1})$ as our scaling (see similar argument at right edge in \cite{el2006rate}). Thus, for the reasoning of the rescaling parameter $\tau_N$ at the hard edge in \cite{cai2025mesoscopicratesconvergencehermitian}, based on $(\mu_{n-1,N},\sigma_{n-1,N})$ and $(\mu_{n,N-1},\sigma_{n,N-1})$, we will construct a new rescaling $\tau$ to achieve the ideal rate. To this end, we bring in the restriction of $(\tilde\mu_{n,N},\tilde\sigma_{n,N})$ in terms of large $N$ as following:
\begin{equation}\label{restriction of recaling}
    \begin{array}{cc}
    \tilde{\mu}_{n,N}-\mu_{n-1,N}=O(1),~~\tilde\mu_{n,N}-\mu_{n,N-1}=O(1);\\
    \\
    \frac{\tilde{\sigma}_{n,N}}{\sigma_{n,N-1}}=1+O\left(N^{-1}\right),~~\frac{\tilde{\sigma}_{n,N}}{\sigma_{n,N-1}}=1+O\left(N^{-1}\right).
    \end{array}
\end{equation}
\begin{proposition}\label{LUE left soft fac}
    Let $\xi_{a,N}$, $\eta_{a,N}$ be the kernel functions defined in \eqref{explicit xi and zeta} and $(\tilde{\mu}_{n,N},\tilde{\sigma}_{n,N})$ be a pair with respect to the left-soft-edged scaling $\tau$ satisfying the restriction \eqref{restriction of recaling}. Define:
    \begin{equation}\label{left soft kernel}
    \begin{array}{cc}
          K^{LS}_{\tau}(x,y):=\tilde\sigma_{n,N}K_{L_{a,N}}\left(\tilde{\mu}_{n,N}-\tilde\sigma_{n,N}x,\tilde\mu-\tilde\sigma_{n,N}y\right),\\
          \\
        \xi_{\tau}(s):=\tilde\sigma_{n,N}\xi_{{a,N}}\left(\tilde{\mu}_{n,N}-\tilde\sigma_{n,N}s\right),\\
        \\
          \eta_{\tau}(s):=\tilde\sigma_{n,N}\eta_{{a,N}}\left(\tilde{\mu}_{n,N}-\tilde\sigma_{n,N}s\right).
    \end{array}
    \end{equation}
    Then the corresponding operators satisfy: $\mathbf{K}_{\tau}^{LS}=\mathbf{G}_\tau\mathbf{H}_\tau+\mathbf{H}_\tau\mathbf{G}_\tau$ with kernel functions $G_{\tau}(x,y)=\xi_\tau(x+y)$ and $H_\tau(x,y)=-\eta_\tau(x+y)$. Moreover, let $Ai(x+y)$ be the kernel of the integral operator $\mathbf{Ai}$, then the decomposition: $\mathbf{Ai}^2=\mathbf{K}_{Airy}$ holds for the Airy operator.
    
    In particular, for any $s_0\in\mathbb{R}$, $\mathbf{G}_{\tau}$, $\mathbf{H}_\tau$ and $\mathbf{Ai}$ can be regarded as Hilbert--Schmidt integral operators on $L^2(s_0,\infty)$ with kernels $\xi_\tau(x+y-s_0)$, $-\eta_\tau(x+y-s_0)$ and $Ai(x+y-s_0)$ respectively.
\end{proposition}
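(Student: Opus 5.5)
The plan is to derive the factorization of $\mathbf{K}^{LS}_\tau$ directly from Proposition \ref{LUE fac} by an affine change of variables. I will do the computation at the level of kernels. The starting point is the identity $\mathbf{K}_{L_{a,N}}=\mathbf{G}_{a,N}\mathbf{H}_{a,N}+\mathbf{H}_{a,N}\mathbf{G}_{a,N}$ on $L^2(0,\infty)$, written as
\begin{equation*}
K_{L_{a,N}}(u,v)=\int_0^\infty\left(\xi_{a,N}(u+w)\eta_{a,N}(w+v)+\eta_{a,N}(u+w)\xi_{a,N}(w+v)\right)dw .
\end{equation*}
I will substitute $u=\tilde\mu-\tilde\sigma x$ and $v=\tilde\mu-\tilde\sigma y$. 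Then I change the integration variable $w$ so that each argument becomes affine in the new scaled variables.

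The natural substitution is $w=\tilde\mu-\tilde\sigma t$, with $dw=\tilde\sigma\,dt$ (up to orientation). Then $u+w=2\tilde\mu-\tilde\sigma(x+t)$, which is not of the form $\tilde\mu-\tilde\sigma(\cdot)$. So I will use $w=-\tilde\sigma t$ instead. This gives $u+w=\tilde\mu-\tilde\sigma(x+t)$, and similarly $w+v=\tilde\mu-\tilde\sigma(t+y)$. Multiplying by $\tilde\sigma$ then yields
\begin{equation*}
K^{LS}_\tau(x,y)=\int\left(\xi_\tau(x+t)\,\eta_\tau(t+y)+\eta_\tau(x+t)\,\xi_\tau(t+y)\right)dt,
\end{equation*}
using $\tilde\sigma^2\,dw/\tilde\sigma$ bookkeeping: two factors of $\tilde\sigma$ are absorbed into $\xi_\tau,\eta_\tau$, and one factor $\tilde\sigma^{-1}$ is cancelled by the prefactor of $K^{LS}_\tau$.

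The orientation reversal $dw=-\tilde\sigma\,dt$ accounts for the minus sign in $H_\tau=-\eta_\tau$. With that sign, the right-hand side becomes $\int G_\tau(x,t)H_\tau(t,y)+H_\tau(x,t)G_\tau(t,y)\,dt$, i.e. the kernel of $\mathbf{G}_\tau\mathbf{H}_\tau+\mathbf{H}_\tau\mathbf{G}_\tau$. The range of $t$ is automatically handled by the indicator $\chi_{(0,\infty)}$ built into $\xi_{a,N}$ and $\eta_{a,N}$. So the integral may be taken over all of $\mathbb{R}$ and restricted to whatever half-line is convenient.

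The Airy statement $\mathbf{Ai}^2=\mathbf{K}_{Airy}$ is \eqref{Afac}. Concretely, it is the classical identity $K_{Ai}(x,y)=\int_0^\infty Ai(x+t)Ai(t+y)\,dt$. This follows by checking that $(x-y)\int_0^\infty Ai(x+t)Ai(y+t)dt$ equals $Ai(x)Ai'(y)-Ai'(x)Ai(y)$: use $Ai''(z)=zAi(z)$, integrate by parts, and use decay at $+\infty$.

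For the final claim on $L^2(s_0,\infty)$, I translate $x\mapsto x+s_0/2$, $y\mapsto y+s_0/2$, $t\mapsto t-s_0/2$. This converts the restriction of the product formula to $(s_0,\infty)$ into operators on $L^2(0,\infty)$ with kernels shifted by $s_0$. The Hilbert--Schmidt property is the main point requiring care. For $\mathbf{Ai}$ it follows from $\int_{s_0}^\infty\int_{s_0}^\infty Ai(x+y)^2\,dx\,dy<\infty$, by superexponential decay at $+\infty$ and boundedness on compact sets. For $\xi_\tau,\eta_\tau$ it follows because, as $x$ or $y\to+\infty$, the argument $\tilde\mu-\tilde\sigma(x+y)$ becomes negative and the kernels vanish identically. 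The support is therefore a bounded triangle, on which the kernels are bounded. The one delicate spot is the factor $x^{a/2-1}$ in $\xi_{a,N}$ near argument $0$, which is square integrable since $a\ge2$. Hence all $L^2$ norms are finite, and Lemma \ref{21in} identifies them with the Hilbert--Schmidt norms.
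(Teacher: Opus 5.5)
Your outline matches the paper's one-line proof: apply Proposition \ref{LUE fac}, rescale affinely, then shift by $s_0$. The Airy identity and the Hilbert--Schmidt bounds are fine. The central change of variables, however, is wrong in two places.

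\textbf{The substitution does not give the claimed formula.} With $w=-\tilde\sigma_{n,N}t$, the range $w\in(0,\infty)$ becomes $t\in(-\infty,0)$. The sign in $dw=-\tilde\sigma_{n,N}\,dt$ is used up when you reverse the limits. What you actually obtain is
\[
K^{LS}_\tau(x,y)=\int_{-\infty}^{0}\bigl(\xi_\tau(x+t)\eta_\tau(t+y)+\eta_\tau(x+t)\xi_\tau(t+y)\bigr)\,dt .
\]
This has a plus sign, and the intermediate variable runs over the wrong half-line. The composition $\mathbf{G}_\tau\mathbf{H}_\tau$ on $L^2(s_0,\infty)$ integrates, after the shift $t=t'+s_0$, over $t'\in(0,\infty)$. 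So the orientation does not produce the sign in $H_\tau=-\eta_\tau$.

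\textbf{The indicator does not fix the range.} $\xi_\tau(x+t)\neq 0$ exactly when $x+t<\tilde\mu_{n,N}/\tilde\sigma_{n,N}$. So the integrand is generically nonzero for $0<t<\tilde\mu_{n,N}/\tilde\sigma_{n,N}-\max(x,y)$. In the original variables this is $-\min(u,v)<w<0$, the stretch between the hard wall and the point. In fact the integral over all of $\mathbb{R}$ vanishes identically, so it cannot be ``taken over all of $\mathbb{R}$''. Your two sign errors cancel, which is why you reach the right final formula, but neither step holds.

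\textbf{The missing ingredient is exactly that vanishing, and it uses the hard wall at $0$.} Put $\Phi(p,q)=\xi_{a,N}(p)\eta_{a,N}(q)+\eta_{a,N}(p)\xi_{a,N}(q)$. Proposition \ref{LUE fac}, read pointwise for these continuous kernels, gives $K_{L_{a,N}}(u+h,v+h)=\int_h^\infty\Phi(u+w,v+w)\,dw$ for all $h>-\min(u,v)$. Now let $h\downarrow-\min(u,v)$. The left side tends to $0$, because $\psi_{L_a,j}(x)=O(x^{a/2})$ forces $K_{L_{a,N}}(0^+,\cdot)=K_{L_{a,N}}(\cdot,0^+)=0$. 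Also $\Phi(u+w,v+w)=0$ for $w<-\min(u,v)$. Hence $\int_{\mathbb{R}}\Phi(u+w,v+w)\,dw=0$, that is,
\[
K_{L_{a,N}}(u,v)=-\int_{-\infty}^{0}\Phi(u+w,v+w)\,dw .
\]
Only from this representation does $w=-\tilde\sigma_{n,N}t$ with $t\in(0,\infty)$ give
\[
K^{LS}_\tau(x,y)=-\int_0^\infty\bigl(\xi_\tau(x+t)\eta_\tau(t+y)+\eta_\tau(x+t)\xi_\tau(t+y)\bigr)\,dt ,
\]
which is the kernel of $\mathbf{G}_\tau\mathbf{H}_\tau+\mathbf{H}_\tau\mathbf{G}_\tau$ with $H_\tau=-\eta_\tau$. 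The $L^2(s_0,\infty)$ version then follows by substituting $t\mapsto t-s_0$ in the intermediate variable alone. No translation of $x$ and $y$ by $s_0/2$ is involved.

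So the minus sign comes from integrating $(\partial_u+\partial_v)K_{L_{a,N}}=-\Phi$ from the hard wall rather than from $+\infty$. The reflection $u=\tilde\mu_{n,N}-\tilde\sigma_{n,N}x$ forces this. It is the real content behind the paper's phrase ``performing the linear scaling'', and it is the step your proposal misses.
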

\begin{proof}
   Under the assumption that $\frac{N}{n}\rightarrow\gamma\in(0,1)$, the parameter satisfies $a>2$ for large $N$ and hence, Proposition \ref{LUE fac} applies. Performing the linear scaling and then shifting by $s_0$ finishes the proof.
\end{proof}

In particular, the kernels $\xi_\tau$ and $\eta_\tau$ take the following explicit forms in terms of $F$:
\begin{equation*}
    \xi_{\tau}(s)=\frac{(-1)^N}{\sqrt2}\left(\frac{\sqrt{Nn}\sigma_{n-1,N}^\frac{1}{2}\tilde{\sigma}_{n,N}}{\mu_{n-1,N}}\right)F_{n-1,N}\left(\tilde{\mu}_{n,N}-\tilde{\sigma}_{n,N}s\right)\left(\frac{\mu_{n-1,N}}{\tilde{\mu}_{n,N}-\tilde{\sigma}_{n,N}s}\right),
\end{equation*}
\begin{equation*}
    \eta_{\tau}(s)=\frac{(-1)^{N-1}}{\sqrt2}\left(\frac{\sqrt{Nn}\sigma_{n,N-1}^\frac{1}{2}\tilde{\sigma}_{n,N}}{\mu_{n,N-1}}\right)F_{n,N-1}\left(\tilde{\mu}_{n,N}-\tilde{\sigma}_{n,N}s\right)\left(\frac{\mu_{n,N-1}}{\tilde{\mu}_{n,N}-\tilde{\sigma}_{n,N}s}\right).
\end{equation*}

The following lemma regarding the constant $c_N^{\{1\}}$ in \eqref{w2} will allow us to derive the closed form of $F$.
\begin{lemma}\label{c and r}
    Suppose $c^{\{1\}}_{N}$ is the constant defined in \eqref{w2} with respect to $z_1$. Then
 \begin{equation}\label{explicit4c}
     c^{\{1\}}_{N}=\frac{\sqrt{2\pi a}(N+a)!\kappa_N^\frac{1}{6}}{a!N!}a^ae^{-\frac{a}{2}}\left(N+\frac{1}{2}\right)^{N+\frac{1}{2}}\left(n+\frac{1}{2}\right)^{-\left(n+\frac{1}{2}\right)}.
 \end{equation}
 Moreover, define $r^{\{1\}}_{N}:=\sqrt{\frac{N!}{(N+a)!}}c^{\{k\}}_{N}\kappa_N^{-\frac{1}{6}}$. Then for large $N$,
 \begin{equation}\label{B4rN}
     r^{\{1\}}_{N}=1+O(N^{-1}).
 \end{equation}
\end{lemma}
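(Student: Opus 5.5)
The constant $c^{\{1\}}_N$ is fixed by comparing the two sides of \eqref{w2} at one convenient endpoint. The natural endpoint is $z\to0^+$, which lies in $I_1=(0,2)$, on the side $z<z_1$ where $\zeta>0$. There both sides have an explicit leading behaviour, so matching the leading coefficients determines $c^{\{1\}}_N$ exactly. Once the closed form \eqref{explicit4c} is in hand, \eqref{B4rN} is a Stirling-formula computation.

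\textbf{Left side of \eqref{w2}.} Since $L_N^a(0)=\binom{N+a}{N}=\frac{(N+a)!}{a!N!}$, we have
\[
w_N(\kappa_N z)=(\kappa_N z)^{\frac{a+1}{2}}\frac{(N+a)!}{a!N!}\bigl(1+O(z)\bigr).
\]

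\textbf{Right side of \eqref{w2}.} As $z\to0$, $f(z)\sim \frac{z_1z_2}{4z^2}=\frac{\omega_N^2}{16z^2}$, so $f^{1/2}(t)=\frac{\omega_N}{4t}+\alpha+O(t)$ for an explicit constant $\alpha$. Hence
\[
\frac23\zeta^{3/2}(z)=-\frac{\omega_N}{4}\log z+C_N+o(1),
\]
where $C_N=\lim_{z\to0}\bigl(\int_z^{z_1}f^{1/2}\,dt+\frac{\omega_N}{4}\log z\bigr)$. This integral is elementary: with $f^{1/2}=\frac{\sqrt{(z_1-t)(z_2-t)}}{2t}$ it is a standard algebraic–logarithmic antiderivative, and evaluating it in terms of $z_1,z_2,\omega_N$ produces the logarithms behind $a^a$, $(N+\frac12)^{N+\frac12}$ and $(n+\frac12)^{-(n+\frac12)}$, using $z_1z_2=\omega_N^2$ and $z_1+z_2=4$.

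Since $\zeta\to+\infty$, the Airy asymptotic $Ai(x)\sim\frac{1}{2\sqrt\pi}x^{-1/4}e^{-\frac23x^{3/2}}$ applies with $x=\kappa_N^{2/3}\zeta$. This gives
\[
\tilde f^{-1/4}Ai(\kappa_N^{2/3}\zeta)\sim\frac{1}{2\sqrt\pi}\,\kappa_N^{-1/6}f^{-1/4}\,e^{-\kappa_N\frac23\zeta^{3/2}},
\]
because $\tilde f^{-1/4}\zeta^{-1/4}=f^{-1/4}$. Substituting the expansions,
\[
f^{-1/4}\sim (4z/\omega_N)^{1/2},\qquad e^{-\kappa_N\frac23\zeta^{3/2}}\sim z^{\kappa_N\omega_N/4}e^{-\kappa_NC_N}=z^{a/2}e^{-\kappa_NC_N},
\]
so the right side behaves like $c_N\cdot\text{const}\cdot z^{\frac{a+1}{2}}$. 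The power of $z$ matches the left side, as it must, and equating coefficients yields \eqref{explicit4c}, with the factor $\sqrt{2\pi a}$ coming from $2\sqrt\pi\,(\omega_N/4)^{1/2}\kappa_N^{1/2}$ and the power $\kappa_N^{1/6}$ from the Airy prefactor.

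\textbf{Main obstacle.} The difficulty is justifying that the error term $\epsilon_2(\kappa_N,z)$ is negligible relative to $Ai$ as $z\to0$, that is, $\epsilon_2/Ai\to0$ rather than merely being small for large $N$. I would use Olver's error bound for the recessive solution, $|\epsilon_2|\le Ai\cdot(\exp(\mathcal V(\Psi)/\kappa_N)-1)$, where $\mathcal V$ is the total variation of the error-control function on $(\zeta,\infty)$. I would then check that $\Psi$ stays integrable as $\zeta\to+\infty$, i.e.\ as $z\to0$. Near $z=0$, $g/\tilde f$ combined with the Schwarzian-type term should give $\Psi\sim c\zeta^{-2}$, which is integrable. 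The normalization should be chosen so that $\epsilon_2/Ai\to0$ exactly at the endpoint $z\to0$, where the matching is done, so the identification of $c_N$ is exact.

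\textbf{Estimate \eqref{B4rN}.} Insert \eqref{explicit4c} into $r_N^{\{1\}}=\sqrt{N!/n!}\,c_N\kappa_N^{-1/6}$ and apply Stirling's formula with $O(1/k)$ remainders to $a!$, $N!$ and $n!$. The leading exponentials and powers cancel: for instance $\sqrt{n!/N!}$ pairs with $(N+\frac12)^{N+\frac12}(n+\frac12)^{-(n+\frac12)}$ through the identity $(k+\frac12)^{k+\frac12}e^{-k-\frac12}\sqrt{2\pi}=k!\,(1+O(k^{-1}))$. Since $a\asymp N$ when $\gamma\in(0,1)$, all remainders are $O(N^{-1})$, which gives \eqref{B4rN}.
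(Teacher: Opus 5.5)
Your route is the same as the paper's. You match both sides of \eqref{w2} as $z\to0^+$ using $L_N^a(0)=\binom{N+a}{N}$, the large-argument asymptotics of $Ai$ and an explicit antiderivative of $f^{1/2}$, and then apply Stirling. Your argument that $\epsilon_2/Ai\to0$ is also fine; the paper drops $\epsilon_2$ in \eqref{conCN} without comment, and Proposition \ref{epsilon_2} justifies it since $\mathbf M/\mathbf E=\sqrt2\,Ai$ on $(c,\infty)$ and $\mathcal V(\zeta)\to0$.

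The gap is that the step carrying all the content, the value of the constant, is asserted rather than computed, and what you assert is wrong.

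There is first a slip in the leading behaviour: $f(z)\sim z_1z_2/(4z^2)=\omega_N^2/(4z^2)$, not $\omega_N^2/(16z^2)$. Hence $f^{1/2}(t)\sim\omega_N/(2t)$ and $\frac23\zeta^{3/2}=-\frac{\omega_N}{2}\log z+c_0+o(1)$, so your $C_N$, defined with $\frac{\omega_N}{4}\log z$, does not exist. Since $\omega_N=a/\kappa_N$, your coefficient would give $z^{a/4}$ and a prefactor $2\sqrt\pi(\kappa_N\omega_N/4)^{1/2}=\sqrt{\pi a}$. You reach $z^{a/2}$ and $\sqrt{2\pi a}$ only through a second, compensating slip, namely writing $\kappa_N\omega_N/4=a/2$.

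More seriously, carry out the evaluation of $c_0$ as the paper does in \eqref{anti4zLG}--\eqref{c_0}, using $2-\omega_N=(2N+1)/\kappa_N$ and $2+\omega_N=(2n+1)/\kappa_N$. You get
\[
\kappa_Nc_0=\frac{N+\frac12}{2}\log\Bigl(N+\frac12\Bigr)-\frac{n+\frac12}{2}\log\Bigl(n+\frac12\Bigr)+a\log a-\frac a2-\frac a2\log\kappa_N .
\]
So the matching produces $\bigl(N+\frac12\bigr)^{\frac{N+1/2}{2}}\bigl(n+\frac12\bigr)^{-\frac{n+1/2}{2}}$, which is half the exponents printed in \eqref{explicit4c}. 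This halved form is what the last line of \eqref{conCN} actually contains; the displayed formula \eqref{explicit4c} is a typo. Your claim that matching ``yields \eqref{explicit4c}'' therefore cannot be right.

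Your own Stirling step exposes the inconsistency. We have
\[
\sqrt{n!/N!}=e^{-a/2}\bigl((n+\tfrac12)^{n+\frac12}/(N+\tfrac12)^{N+\frac12}\bigr)^{1/2}(1+O(N^{-1})),\qquad \sqrt{2\pi a}\,a^ae^{-a/2}/a!=e^{a/2}(1+O(a^{-1})).
\]
So $\sqrt{n!/N!}$ absorbs only the square root of the ratio, not the full power you pair it with. With the exponents of \eqref{explicit4c} you would be left with
\[
r^{\{1\}}_N\approx\bigl((N+\tfrac12)^{N+\frac12}/(n+\tfrac12)^{n+\frac12}\bigr)^{1/2}\to0 \quad (\text{since } a\asymp N),
\]
which contradicts \eqref{B4rN}. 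With the halved exponents everything cancels:
\[
(r^{\{1\}}_N)^2=\frac{(1+\frac1{2N})^{N+\frac12}}{(1+\frac1{2n})^{n+\frac12}}\,(1+O(N^{-1}))=1+O(N^{-1}),
\]
as in \eqref{ConRN}. To close the argument you must do the $c_0$ computation explicitly and prove the corrected, halved-exponent formula; the two parts of the statement as printed are mutually inconsistent.
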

\begin{proof}
    See Appendix \ref{P4CandR}.
\end{proof}
\bigskip\noindent\textbf{Remark.} Explicit expression of $c^{\{2\}}_{N}$ and $r^{\{2\}}_{N}$ are given in \cite{johnstone2001distribution}. Since $c^{\{2\}}_{N}$ carries a sign of $(-1)^N$, it directly offsets the signs of $\xi_\tau$ and $\eta_\tau$, thereby eliminating the need of pairing as in Theorem \ref{main left soft edge2} in the largest eigenvalue expansion. In contrast, because $c^{\{1\}}_{N}>0$ in the previous lemma and offers no such sign cancellation, classification according the the parities of $N$ becomes necessary in our least eigenvalue expansion.

Given our focus is solely on $z_1$, we will drop the notation $\{1\}$ in the following argument.

For any $j,~k\in\mathbb{N}$, define the deviation parameter with respect to the scaling $\tau$ as:
 \begin{equation*}
     \theta_{j,k}:=(nN)^\frac{1}{4}\sqrt{\sigma_{j,k}}\frac{\tilde{\sigma}_{n,N}}{\tilde{\mu}_{n,N}}.
 \end{equation*}
 Then both $\xi_\tau$ and $-\eta_\tau$ can be written as:
\begin{equation}\label{F2h}
   \frac{(-1)^N}{\sqrt2}\theta_{j,k}F_{j,k}(\tilde{x}_N(s))\frac{\mu_{j,k}}{\tilde{x}_N(s)}:=\frac{(-1)^N\theta_{j,k}}{\sqrt2}h_{j,k}(\tilde{x}_N(s))
\end{equation}
 for some $j,~k$ and $\tilde{x}_N(s):=\tilde\mu_{n,N}-\tilde\sigma_{n,N}s$ since the sign of $\eta_\tau$ is $(-1)^{N-1}$.
 
 The following lemma indicates a simple fact that $\theta$ is small given the scaling $\tau$ is closed to both $(\mu_{n-1,N},\sigma_{n-1,N})$ and $(\mu_{n,N-1},\sigma_{n,N-1})$.
 \begin{lemma}\label{order}
    If $\tau:(\tilde{\mu}_{n,N},\tilde{\sigma}_{n,N})$ is a left-soft-edged scaling satisfying \eqref{restriction of recaling}, then for large $N$,
    \begin{equation*}
        \theta_{n-1,N},~~\theta_{N,n-1}=1+O(N^{-1}).
    \end{equation*}
\end{lemma}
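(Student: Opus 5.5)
The plan is a direct computation: first show that $\theta_{j,k}$ is $1+O(N^{-1})$ when $(\tilde\mu_{n,N},\tilde\sigma_{n,N})$ is replaced by the exact pair $(\mu_{j,k},\sigma_{j,k})$, and then use the restriction \eqref{restriction of recaling} to control the cost of that replacement. I read the second quantity in the statement as $\theta_{n,N-1}$, the one attached to $\eta_\tau$. The same argument applies verbatim to both pairs $(j,k)=(n-1,N)$ and $(n,N-1)$.

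\emph{Step 1 (exact pair).} Write $J=j+\frac12$ and $K=k+\frac12$. Since $\frac{1}{\sqrt K}-\frac{1}{\sqrt J}=\frac{\sqrt J-\sqrt K}{\sqrt{JK}}$, we get
\[
\sigma_{j,k}=(\sqrt J-\sqrt K)^{4/3}(JK)^{-1/6},\qquad \mu_{j,k}=(\sqrt J-\sqrt K)^{2},
\]
and therefore
\[
(nN)^{1/4}\sqrt{\sigma_{j,k}}\,\frac{\sigma_{j,k}}{\mu_{j,k}}=(nN)^{1/4}(\sqrt J-\sqrt K)^{2/3+4/3-2}(JK)^{-1/12-1/6}=\Big(\frac{nN}{JK}\Big)^{1/4}.
\]
For both pairs, $JK$ equals $nN+O(n+N)$. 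Since $n\asymp N$ (because $N/n\to\gamma>0$), this gives $(nN/JK)^{1/4}=1+O(N^{-1})$.

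\emph{Step 2 (replacing by $\tilde\mu,\tilde\sigma$).} We have
\[
\theta_{j,k}=\Big(\frac{nN}{JK}\Big)^{1/4}\cdot\frac{\tilde\sigma_{n,N}}{\sigma_{j,k}}\cdot\frac{\mu_{j,k}}{\tilde\mu_{n,N}}.
\]
By \eqref{restriction of recaling}, the middle factor is $1+O(N^{-1})$. For the last factor, we need $\mu_{j,k}$ to be of exact order $N$. This is where $\gamma<1$ enters: $\sqrt J-\sqrt K=\sqrt N\,(\gamma^{-1/2}-1)(1+o(1))$, so $\mu_{j,k}\ge c(\gamma)N$ for large $N$. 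Combined with $\tilde\mu_{n,N}-\mu_{j,k}=O(1)$, this gives $\mu_{j,k}/\tilde\mu_{n,N}=1+O(N^{-1})$. Multiplying the three factors proves the lemma.

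The only delicate point is this lower bound $\mu_{j,k}\gtrsim N$, together with keeping track of the fact that the implied constants depend on $\gamma$. If $\gamma=1$, the left edge degenerates, $\mu_{j,k}$ is no longer of order $N$, and the $O(1)$ shift would not be negligible. Everything else is elementary algebra on the explicit formulas for $\mu_{j,k}$ and $\sigma_{j,k}$.
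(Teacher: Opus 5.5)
Your proposal is correct and follows the paper's proof essentially step for step. The paper likewise writes $\theta_{n-1,N}=\frac{\tilde\sigma_{n,N}}{\sigma_{n-1,N}}\cdot\frac{\mu_{n-1,N}}{\tilde\mu_{n,N}}\cdot\bigl(\frac{nN}{(n-\frac12)(N+\frac12)}\bigr)^{1/4}$ and checks that each factor is $1+O(N^{-1})$. Your explicit use of $\mu_{j,k}\ge c(\gamma)N$ to turn the $O(1)$ shift in the restriction into a relative $O(N^{-1})$ error makes precise a step the paper leaves implicit, and your reading of $\theta_{N,n-1}$ as $\theta_{n,N-1}$ is the intended one.
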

\begin{proof}
We only prove $\theta_{n-1,N}=1+O\left(N^{-1}\right)$ since the method simply applies to $\theta_{n,N-1}$. Recalling that $\tilde{\mu}_{n,N}=\mu_{n-1,N}+O\left(N^{-1}\right)$ and $\frac{\tilde{\sigma}_{n,N}}{\sigma_{n-1,N}}=1+O\left(N^{-1}\right)$ according to \eqref{restriction of recaling}, we see
\begin{equation*}
    \frac{\mu_{n-1,N}}{\tilde{\mu}_{n,N}},~~\frac{\tilde{\sigma}_{n,N}}{\sigma_{n-1,N}}=1+O\left(N^{-1}\right).
\end{equation*}
Consequently,
\begin{equation}\label{Deviation1}
    \begin{split}
        \theta_{n-1,N}&=(nN)^\frac{1}{4}\sigma_{n-1,N}^\frac{1}{2}\frac{\tilde{\sigma}_{n,N}}{\tilde{\mu}_{n,N}}=\frac{\tilde{\sigma}_{n,N}}{\sigma_{n-1,N}}\frac{\mu_{n-1,N}}{\tilde{\mu}_{n,N}}\frac{(nN)^\frac{1}{4}\sigma_{n-1,N}^\frac{3}{2}}{\mu_{n-1,N}}\\
        &=\frac{\tilde{\sigma}_{n,N}}{\sigma_{n-1,N}}\frac{\mu_{n-1,N}}{\tilde{\mu}_{n,N}}\left(\frac{nN}{(n-\frac{1}{2})(N+\frac{1}{2})}\right)^\frac{1}{4}\\
        &=\frac{\tilde{\sigma}_{n,N}}{\sigma_{n-1,N}}\frac{\mu_{n-1,N}}{\tilde{\mu}_{n,N}}\left(1-\frac{1}{2n}\right)^{-\frac{1}{4}}\left(1+\frac{1}{2N}\right)^{-\frac{1}{4}}\\
        &=1+O\left(N^{-1}\right)
    \end{split}
\end{equation}
since all terms of the product in the third line are $1+O\left(N^{-1}\right)$.
\end{proof}
The concrete construction of such $\tau:(\tilde{\mu}_{n,N},\tilde{\sigma}_{n,N})$ will be given in Lemma \ref{re4D} below.

\subsection{Proof of the Left Soft Edge for LUE}
With necessary parameters and functions established in our previous argument, this section proves the following important theorem regarding $\xi_\tau$, $\eta_\tau$ and $Ai$ on $L^2(s_0,\infty)$.

\begin{theorem}\label{main left soft edge2}
 Consider an LUE $L_{a,N}$ such that $\lim\limits_{N\rightarrow\infty}\frac{N}{n}=\gamma\in(0,1)$ where $n=N+a$. On any $A\subset[s_0,+\infty)$ for $s_0>-\infty$. Define parameters:
    \begin{equation*}
      \begin{array}{ll}
      \mu_{k,m}:=\left(\sqrt{k+\frac{1}{2}}-\sqrt{m+\frac{1}{2}}\right)^2,\\
      \sigma_{k,m}:=\left(\sqrt{k+\frac{1}{2}}-\sqrt{m+\frac{1}{2}}\right)\left(\left(m+\frac{1}{2}\right)^{-\frac{1}{2}}-\left(k+\frac{1}{2}\right)^{-\frac{1}{2}}\right)^{\frac{1}{3}}.
      \end{array}
    \end{equation*}
    Let $G_\tau=\xi_\tau$, $H_\tau=-\eta_\tau$ be kernels defined in \eqref{left soft kernel} and $\tau$ be the scaling with respect to
\begin{equation*}
    \tilde{\mu}_{n,N}=\left(\frac{1}{\sqrt{\sigma_{n-1,N}}}+\frac{1}{\sqrt{\sigma_{n,N-1}}}\right)\left(\frac{1}{\mu_{n-1,N}\sqrt{\sigma_{n-1,N}}}+\frac{1}{\mu_{n,N-1}\sqrt{\sigma_{n,N-1}}}\right)^{-1},
\end{equation*}
\begin{equation*}
    \tilde{\sigma}_{n,N}=\left(\frac{\sqrt{\sigma_{n-1,N}}}{\mu_{n-1,N}}+\frac{\sqrt{\sigma_{n,N-1}}}{\mu_{n,N-1}}\right)\left(\frac{1}{\mu_{n-1,N}\sqrt{\sigma_{n-1,N}}}+\frac{1}{\mu_{n,N-1}\sqrt{\sigma_{n,N-1}}}\right)^{-1}.
\end{equation*}
Then, for large $N$ and $t\in A$,
   \begin{equation}\label{main2'}
\begin{array}{ll}
  |G_{\tau}(t)+H_{\tau}(t)+(-1)^{N+1}\sqrt2 Ai(t)|\le N^{-\frac{2}{3}}C(s_0,\gamma)e^{-\frac{t}{2}},\\
  \\
  \left|G_{\tau}(t)+(-1)^{N+1}\frac{\sqrt2}{2} Ai(t)\right|\le N^{-\frac{1}{3}}C(s_0,\gamma)e^{-\frac{t}{2}},\\
  \\
  \left|H_{\tau}(t)+(-1)^{N+1}\frac{\sqrt2}{2} Ai(t)\right|\le N^{-\frac{1}{3}}C(s_0,\gamma)e^{-\frac{t}{2}}.
\end{array}
\end{equation}
Furthermore, for any fixed $\gamma$, $C(s_0,\gamma)$ is non-increasing with respect to $s_0$.
\end{theorem}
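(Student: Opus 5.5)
We follow El Karoui's right-edge strategy \cite{el2006rate}, adapted to the turning point $z_1$. The starting point is the representation \eqref{F2h}: both $G_\tau(t)$ and $H_\tau(t)$ equal $\frac{(-1)^N\theta_{j,k}}{\sqrt2}h_{j,k}(\tilde x_N(t))$ with $(j,k)=(n-1,N)$ and $(j,k)=(n,N-1)$ respectively. Here $h_{j,k}(x)=F_{j,k}(x)\mu_{j,k}/x$, and by \eqref{w2} together with Lemma \ref{c and r},
\[
F_{j,k}(x)=r_N\Bigl(\tfrac{\kappa}{\sigma_{j,k}^3}\Bigr)^{1/6}\tilde f^{-1/4}(z)\bigl(Ai(\kappa^{2/3}\zeta(z))+\epsilon_2\bigr),\qquad x=\kappa z .
\]
The argument then has two stages. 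First we prove a one-function expansion at the natural scaling, namely
\[
\theta h_{j,k}\bigl(\mu_{j,k}-\sigma_{j,k}s\bigr)=Ai(s)+N^{-2/3}\,\mathcal{E}_{j,k}(s),
\]
where $\mathcal E_{j,k}$ has an explicit leading part and a remainder bounded by $C(s_0,\gamma)e^{-s/2}$. Second we transfer this to the common scaling $\tau$ and pair the two functions.

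\textbf{Stage 1.} We split $s$ into two regions. In the region $s\le s_1N^{1/6}$ (or any region with $\zeta$ small), we Taylor expand the Liouville--Green variable: $\kappa^{2/3}\zeta(z_1-\sigma s/\kappa)=s+c_1(\gamma)s^2N^{-2/3}+\dots$. The prefactor expands as $(\kappa/\sigma^3)^{1/6}\tilde f^{-1/4}\mu/x=1+c_2(\gamma)sN^{-2/3}+\dots$, and Lemma \ref{order} together with \eqref{B4rN} supplies the $1+O(N^{-1})$ factors. The error $\epsilon_2$ is controlled by Olver's bound, $|\epsilon_2|\lesssim \kappa^{-1}\mathcal V(\Psi)\,Ai$-type envelope. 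The required fact here is that $\Psi$ has finite variation on $I_1$ near $z_1$, uniformly for $\omega_N\in[\delta,4-\delta]$. Combining these with $|Ai'(s)|,|Ai(s)|\le Ce^{-s}$ for $s\ge s_0$ yields the bound. In the far region $x$ near $0$, i.e.\ $s$ of order $N$, we use the exponential decay of Airy-type envelopes (and $\tilde x_N(s)>0$ effectively truncates $s\le\tilde\mu/\tilde\sigma\sim N^{2/3}$). With this decay, both $|h|$ and $|Ai|$ are bounded by $e^{-s}\le N^{-2/3}e^{-s/2}$. The factor $e^{-s/2}$ in the statement comes from trading half of the decay for polynomial factors $s^k$.

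\textbf{Stage 2.} Writing $\tilde x_N(t)=\mu_{j,k}-\sigma_{j,k}s_{j,k}(t)$, we get $s_{j,k}(t)=t\,\tilde\sigma/\sigma_{j,k}+(\mu_{j,k}-\tilde\mu)/\sigma_{j,k}$. Under \eqref{restriction of recaling}, $s_{j,k}(t)-t=O(N^{-1/3})(1+|t|)$, because $\sigma\sim N^{1/3}$ and $\mu_{n-1,N}-\mu_{n,N-1}=O(1)$. Expanding $Ai(s_{j,k}(t))$ about $t$ immediately gives the two $N^{-1/3}$ estimates in \eqref{main2'}. For the sum $G_\tau+H_\tau$, the first-order terms are $Ai'(t)\,[\,(s_{n-1,N}(t)-t)-(s_{n,N-1}(t)-t)\,]$ (note the sign $H_\tau=-\eta_\tau$ and the $(-1)^{N-1}$), together with the $N^{-1/3}$ parts of $\theta$ and the prefactors. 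These cancel precisely because $\tilde\mu,\tilde\sigma$ are defined as the weighted means given in the statement. This is the analogue of El Karoui's choice: the weights $1/(\mu\sqrt\sigma)$ are exactly the coefficients multiplying $Ai'$ after the normalizations $\mu_{j,k}/x$ and $\sqrt{\sigma_{j,k}}$ in $\theta_{j,k}$. We verify this cancellation by direct algebra, which also confirms \eqref{restriction of recaling} for this $\tau$. The remaining second-order terms are $O(N^{-2/3})(1+t^2)e^{-t}\le CN^{-2/3}e^{-t/2}$.

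\textbf{Main obstacle.} We expect the hardest step to be the uniform control in Stage 1 of Olver's error $\epsilon_2$ and of the expansion of $\zeta$ near the left turning point. Unlike $z_2$, the point $z_1$ lies close to the singularity $z=0$ of $f$ and $g$, so the variation of $\Psi$ on $(0,z_1]$ is finite only for $\omega_N$ bounded away from $0$, and its size depends on $\gamma$. This dependence is the source of the $\gamma$-dependence of $C(s_0,\gamma)$. We would handle it by bounding $\mathcal V(\Psi)$ on $[z,z_1]$ with explicit $\omega_N$-dependent constants, using $\omega_N\in[\delta,4-\delta]$. Monotonicity in $s_0$ is automatic once all constants are taken as suprema over $t\ge s_0$.
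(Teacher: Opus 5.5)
Your architecture matches the paper's: Stage~1 is the $B$/$D$ analysis of Lemmas~\ref{keyI} and~\ref{keyI2}, and Stage~2 is Lemma~\ref{re4D}. \textbf{The gap is in the crux of Stage~2: the first-order term has the wrong sign, and as written the cancellation fails.}

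By \eqref{F2h}, $G_\tau=\xi_\tau$ and $H_\tau=-\eta_\tau$ both carry the sign $(-1)^N$. The $(-1)^{N-1}$ in $\eta_{a,N}$ is exactly undone by the minus in $H_\tau=-\eta_\tau$. The statement you are proving forces this too, since it says both are $\approx(-1)^N Ai/\sqrt2$. Hence, for $t\ge s_0$,
\[
G_\tau(t)+H_\tau(t)=\frac{(-1)^N}{\sqrt2}\Bigl[2Ai(t)+Ai'(t)\bigl((s_{n-1,N}(t)-t)+(s_{n,N-1}(t)-t)\bigr)\Bigr]+O\bigl(N^{-2/3}e^{-t/2}\bigr).
\]
This is a sum, not a difference. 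With your difference, two things go wrong:
\begin{itemize}
\item The leading $Ai(t)$ terms cancel, contradicting the target $(-1)^N\sqrt2\,Ai(t)$.
\item The coefficient $s_{n-1,N}(t)-s_{n,N-1}(t)$ has constant part $(\mu_{n-1,N}-\mu_{n,N-1})/\sigma_{n-1,N}+O(N^{-4/3})$, and $\mu_{n-1,N}-\mu_{n,N-1}\to-(1-\gamma)/\sqrt\gamma\neq 0$. So this part is $\asymp N^{-1/3}$ for every $\tilde\mu$ obeying \eqref{restriction of recaling}, and no choice of $\tau$ can kill it.
\end{itemize}

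\textbf{Where the $N^{-1/3}$ terms actually come from.} $\theta_{j,k}$ and the prefactor $(\kappa/\sigma^3)^{1/6}\tilde f^{-1/4}\mu/x$ have no $N^{-1/3}$ parts. By Lemma~\ref{order} and \eqref{seq1}, \eqref{seq2} they are $1+O(N^{-1})$ and $1+O(sN^{-2/3})$; you yourself use the first fact in Stage~1. So the only thing to cancel is the shift of the Airy arguments.

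\textbf{The repair.} With the correct sign, the condition is
\[
\sum_{(j,k)}\frac{\mu_{j,k}-\tilde\mu}{\sigma_{j,k}}=O(N^{-2/3}),
\]
which is $U_N=0$ in the proof of Lemma~\ref{re4D}.
\begin{itemize}
\item Because $\sigma_{n-1,N}/\sigma_{n,N-1}=1+O(N^{-1})$, the given $\tilde\mu$ is within $O(N^{-1})$ of the $1/\sigma_{j,k}$-weighted mean of $\mu_{n-1,N},\mu_{n,N-1}$. So this sum is in fact $O(N^{-4/3})$.
\item The $t$-coefficient $\sum(\tilde\sigma/\sigma_{j,k}-1)$ is $O(N^{-1})$ for any admissible $\tilde\sigma$.
\end{itemize}
The specific weights $1/(\mu\sqrt\sigma)$ are therefore not forced by the normalizations, as you suggest; any weights with ratio $1+O(N^{-1})$ work. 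With this fixed, the rest of Stage~2 goes through.

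\textbf{Far region (a different route, but viable).} Here you depart from the paper's Lemma~\ref{inlarge}, which handles $\tilde x_N\le\tilde\mu/e$ with $|L_n^a(x)|\le\binom{n+a}{n}e^{x/2}$, Stirling, and positivity of $t^2\log t+(1-t^2)\log(1+t)$. Keeping the Liouville--Green envelope down to $x\downarrow 0$ also works: $\mathcal V$ stays bounded up to $z=0$ because $\Psi\sim\frac{5}{16\zeta^2}$ (Propositions~\ref{Psi} and~\ref{ex2}). You must, however, check that $e^{-\frac23\kappa\zeta^{3/2}}$ absorbs the singular factor $\frac{\mu}{x}f^{-1/4}(z)\asymp z^{-1/2}$. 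It does, since $\frac23\zeta^{3/2}(z)\ge c'(\gamma)+c(\gamma)\log\frac{z_1}{ez}$ for $z\le z_1/e$. This gives $|h|\le e^{-c''(\gamma)N}$ there, which beats $e^{s}$ because $s\le\tilde\mu/\tilde\sigma=O(N^{2/3})$ (not $s$ of order $N$, as you wrote).
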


\bigskip\noindent\textbf{Remark.} In \cite{el2006rate}, El Karoui demonstrated that for the largest eigenvalue, the dependence on $\gamma$ in $C(s_0,\gamma)$ can be eliminated by picking a uniform bound with respect to $\gamma$. However, this approach does not extend to the least one, as both the lower bound $0$ and upper bound $1$ play a role in our setting. Thus, a uniform bound in $\gamma$ can not be established for each estimate, and all bounds necessarily depend on $\gamma$. However, in a slightly different yet natural setting, we can weaken the dependence on $\gamma$ as following.
\begin{theorem}\label{LUE soft left2}
    Consider an LUE $L_{a,N}$. Suppose there exist $\Theta_1,~\Theta_2\in \mathbb{R}$ such that $0<\Theta_1\le\frac{N}{n}\le \Theta_2<1$ and $\lim\limits_{N\rightarrow\infty}\frac{N}{n}=\gamma\in(0,1)$ where $n=N+a$. Then on any $A\subset[s,+\infty)$ for $s>-\infty$, with the same parameters as in Theorem \ref{LUE left soft}, for $N\in\mathbb{N}$ large enough,
    \begin{equation}\label{soft2}
W_1\left(\mathcal{N}_{L_{a,N}}^{LS},\mathcal{N}_{Ai}\right)\le\frac{g(s,\Theta_1,\Theta_2)e^{-s}}{N^{\frac{2}{3}}}
    \end{equation}
 where $g$ is nonincreasing with respect to $s$ for any fixed $\Theta_1$ and $\Theta_2$. 
\end{theorem}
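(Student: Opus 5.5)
The proof of Theorem \ref{LUE soft left2} will follow the same route as the proof of Theorem \ref{LUE left soft}. The point is that every constant arising there depends on $\gamma$ only through quantities that stay bounded once $N/n$ is confined to $[\Theta_1,\Theta_2]$.

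\emph{Reduction to $L^2$ estimates.} By Lemma \ref{M and M} we bound $W_1(\mathcal{N}_{L_{a,N}}^{LS},\mathcal{N}_{Ai})$ by $\|\mathbf{K}^{LS}_\tau-\mathbf{K}_{Ai}\|_1$ on $L^2(s,\infty)$. We then use Proposition \ref{LUE left soft fac} to write this as
\[
\|\mathbf{G}_\tau\mathbf{H}_\tau+\mathbf{H}_\tau\mathbf{G}_\tau-\mathbf{Ai}\mathbf{Ai}\|_1 ,
\]
and apply Lemma \ref{21in} with $\mathbf{A}=\mathbf{G}_\tau$, $\mathbf{B}=\mathbf{H}_\tau$ and $\mathbf{C}=(-1)^{N}\mathbf{Ai}$. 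The sign of $\mathbf{C}$ is chosen according to the parity of $N$ so that it matches \eqref{main2'}. This bounds the trace norm by
\[
\tfrac12\|G_\tau+H_\tau\mp\sqrt2 Ai\|_{L^2}\,\|G_\tau+H_\tau\pm\sqrt2Ai\|_{L^2}+\tfrac12\|G_\tau-H_\tau\|_{L^2}^2 .
\]
All kernels are of Hankel form $\phi(x+y-s)$ on $(s,\infty)^2$. A change of variables therefore turns each $L^2$ norm into $\int_{s}^\infty (u-s)\phi(u)^2\,du$, where $\phi$ is the corresponding one-variable function.

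\emph{Inserting the pointwise bounds.} We substitute the pointwise estimates \eqref{main2'} of Theorem \ref{main left soft edge2}. The first factor is $O(N^{-2/3}C\,e^{-s/2})$ in $L^2$. The second factor is $O(1)$ times $e^{-s/2}$, using $|G_\tau+H_\tau|\le \sqrt2|Ai|+\dots$ and the bound $|Ai(t)|\le Ce^{-t}$ for $t\ge s$, which holds with a constant depending on $s$ for negative $s$. For the last term, $G_\tau-H_\tau$ is the difference of two quantities each within $N^{-1/3}Ce^{-t/2}$ of $\frac{\sqrt2}{2}Ai$, so its squared $L^2$ norm is $O(N^{-2/3}C^2e^{-s})$. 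Multiplying and absorbing polynomial factors in $s$ into $g$ gives $g\,e^{-s}N^{-2/3}$, exactly as in Theorem \ref{LUE left soft}.

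\emph{Uniformity in $\gamma$ (main obstacle).} What remains, and what carries the real content, is to show that the constant $C(s_0,\gamma)$ in Theorem \ref{main left soft edge2} can be replaced by some $C(s_0,\Theta_1,\Theta_2)$. I would revisit each source of $\gamma$-dependence.

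\textbf{(i)} The turning point $z_1=2-\sqrt{4-\omega_N^2}$, where $\omega_N\in[2\frac{1-\Theta_2}{1+\Theta_2},\,2\frac{1-\Theta_1}{1+\Theta_1}]$ up to $O(N^{-1})$. This keeps $z_1$ bounded away from $0$ and keeps $z_1$ bounded away from $z_2$.

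\textbf{(ii)} The Liouville--Green error $\epsilon_2$. It is controlled by Olver's bound through the total variation of $\Psi$ on $I_1$. That variation is a continuous function of $\omega\in[\delta,4-\delta]$, hence bounded by its maximum over the compact parameter interval.

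\textbf{(iii)} The constants $r_N=1+O(N^{-1})$ from Lemma \ref{c and r}, and $\theta=1+O(N^{-1})$ from Lemma \ref{order}, together with the choice of $(\tilde\mu,\tilde\sigma)$ in Theorem \ref{main left soft edge2}. Their $O(\cdot)$ constants come from Stirling expansions in $a$ and $N$, where $a/N\in[\Theta_2^{-1}-1,\Theta_1^{-1}-1]$. They are therefore uniform on this compact range.

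I expect (ii) to be the delicate step. One must check that the expansion of $\Psi$ near $\zeta=0$, and its behavior as $z\to 0^+$ where $g/\tilde f$ is singular, yields bounds continuous in $\omega$ and uniform on compacta. Degeneration happens only as $\omega\to0$ or $\omega\to4$, which corresponds to $\gamma\to1$ or $\gamma\to0$. Once these uniform bounds hold, monotonicity in $s$ follows as before.
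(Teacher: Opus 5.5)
Your proposal is correct and follows the paper's route. You rerun the proof of Theorem \ref{LUE left soft} through Lemmas \ref{M and M} and \ref{21in} and the pointwise bounds of Theorem \ref{main left soft edge2}, then make every $\gamma$-dependent constant uniform over the compact range $[\Theta_1,\Theta_2]$; the paper only cites El Karoui's uniformization, so your itemized check is more explicit than the original.

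Two small corrections:
\begin{itemize}
\item Degeneration occurs as $\omega\to 2$, where $z_1$ and $z_2$ merge (i.e.\ $\gamma\to 0$), not as $\omega\to 4$.
\item Add the constant $C(\gamma,r)$ of Lemma \ref{inlarge} to your list. It is uniform because it comes from $h(\sqrt{\gamma})>0$, and $h$ is continuous and positive on $[\sqrt{\Theta_1},\sqrt{\Theta_2}]$.
\end{itemize}
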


Before we proceed to the proof of Theorem \ref{LUE left soft}, we introduce some auxiliary functions and bounds given in \cite[Chapter $11$]{olver1997asymptotics} involving the Airy function.
\begin{proposition}\label{bound for Airy function}\cite[Section $11.1\&11.2$]{olver1997asymptotics} The Airy function $Ai(x)$ given in Definition \ref{def4 classic DPPs} is the unique solution of the {Airy equation}: $y''(x)=xy(x)$ vanishing at $+\infty$. On $\mathbb{R}^+$, $Ai(x)$ is a decreasing function such that
\begin{equation}\label{ExB4Airy}
\begin{split}
        &0<Ai(x)\le\frac{1}{2\sqrt{\pi}x^{\frac{1}{4}}}\exp\left(-\frac{2}{3}x^\frac{3}{2}\right),\\&Ai(x)\sim\frac{1}{2\sqrt{\pi}x^{\frac{1}{4}}}\exp\left(-\frac{2}{3}x^\frac{3}{2}\right)~for~x~large.
\end{split}
\end{equation}
  Moreover, its derivative $Ai'(x)$, is an increasing negative function on $\mathbb{R}^+$ according to the Airy equation: $Ai''(x)=xAi(x)\ge0$ and hence, $|Ai'(x)|$ is a decreasing function such that
  \begin{equation}\label{ExB4Airy'}
      |Ai'(x)|\le \frac{x^\frac{1}{4}}{2\sqrt\pi}\exp\left(-\frac{2}{3}x^\frac{3}{2}\right).
  \end{equation}
\end{proposition}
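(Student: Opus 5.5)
The plan is to derive everything from the Airy equation $Ai''=xAi$ together with one integral representation obtained by steepest descent. I would start from the contour integral $Ai(x)=\frac{1}{2\pi i}\int_{\infty e^{-\pi i/3}}^{\infty e^{\pi i/3}}\exp\left(\frac{t^3}{3}-xt\right)dt$, which is equivalent to the cosine integral in Definition \ref{def4 classic DPPs}.

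\textbf{Step 1: uniqueness and sign.} Uniqueness of the decaying solution follows because the Wronskian of $Ai$ and $Bi$ is the nonzero constant $1/\pi$. Since $Ai$ decays at $+\infty$, the other solution $Bi$ must grow, so any solution vanishing at $+\infty$ is a multiple of $Ai$.

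For positivity, suppose $Ai$ vanished at some $x_0\ge0$. Since $Ai\to0$ at $\infty$ and $Ai\not\equiv 0$, $|Ai|$ would attain a positive interior extremum on $(x_0,\infty)$. At a positive maximum we would need $Ai''\le0$, but $Ai''=xAi>0$ there; the negative case is symmetric. Hence $Ai$ has no zero on $\mathbb{R}^+$, and $Ai>0$ there because $Ai(0)>0$.

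Then $Ai''=xAi>0$ on $(0,\infty)$, so $Ai'$ is strictly increasing. Since $Ai'\to0$ at $+\infty$, this forces $Ai'<0$. Consequently $Ai$ is decreasing and $|Ai'|$ is decreasing on $\mathbb{R}^+$.

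\textbf{Step 2: the bound \eqref{ExB4Airy}.} Deform the contour through the saddle point $t=\sqrt x$ along the vertical line $t=\sqrt x+iu$. On this line
\[
\frac{t^3}{3}-xt=-\zeta-\sqrt x\,u^2-\frac{iu^3}{3},\qquad \zeta:=\frac{2}{3}x^{3/2}.
\]
This gives
\[
Ai(x)=\frac{e^{-\zeta}}{\pi}\int_0^\infty e^{-\sqrt x u^2}\cos\left(\frac{u^3}{3}\right)du.
\]
Bounding $\cos\le1$ and using $\int_0^\infty e^{-\sqrt x u^2}du=\frac{\sqrt\pi}{2x^{1/4}}$ yields exactly the constant $\frac{1}{2\sqrt\pi x^{1/4}}$. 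The asymptotic statement follows by Laplace's method: rescale $u=x^{-1/4}v$, and then $\cos(v^3/(3x^{3/4}))\to1$ by dominated convergence.

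\textbf{Step 3: the derivative bound \eqref{ExB4Airy'}, expected main obstacle.} Differentiating under the same contour gives
\[
Ai'(x)=-\frac{e^{-\zeta}}{\pi}\int_0^\infty e^{-\sqrt x u^2}\left(\sqrt x\cos\frac{u^3}{3}+u\sin\frac{u^3}{3}\right)du.
\]
The first term alone is bounded by $\frac{x^{1/4}}{2\sqrt\pi}e^{-\zeta}$, but the second term is positive near $u=0$. A crude absolute-value bound therefore does not give the clean constant.

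The alternative route of writing $Ai'(x)=-\int_x^\infty tAi(t)\,dt$ and inserting \eqref{ExB4Airy} also overshoots. After integrating by parts it gives $x^{1/4}e^{-\zeta}$ plus a positive remainder.

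What I would try first is an integration by parts in $u$ against $\frac{d}{du}e^{-\sqrt x u^2}$, moving the $u\sin(u^3/3)$ term onto the cosine term. The aim is to show that the combined integrand is dominated by $\sqrt x\,e^{-\sqrt x u^2}$ in absolute integral. If the sharp constant cannot be recovered this way, I would fall back on the precise statement and error-control functions in \cite[Chapter 11]{olver1997asymptotics}, possibly accepting a factor $1+O(x^{-3/2})$. Such a factor is harmless for the later use in Theorem \ref{main left soft edge2}, since only exponential decay with some constant is needed there.
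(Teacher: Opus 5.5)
The gap is in Step 3, and no integration by parts can close it: the inequality \eqref{ExB4Airy'} is false at every $x>0$. Your Steps 1 and 2 are sound, and more self-contained than the paper, which gives no argument beyond citing Olver. The vertical-line representation $Ai(x)=\frac{e^{-\zeta}}{\pi}\int_0^\infty e^{-\sqrt{x}u^2}\cos(u^3/3)\,du$ gives both parts of \eqref{ExB4Airy}. The maximum-principle argument gives positivity and the monotonicity of $Ai$, $Ai'$ and $|Ai'|$; uniqueness is of course only up to a constant multiple.

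In Step 3 the reverse strict inequality actually holds. Write $Ai'(x)=-\frac{x}{\pi\sqrt{3}}K_{2/3}(\zeta)$. Since $K_\nu(\zeta)=\int_0^\infty e^{-\zeta\cosh t}\cosh(\nu t)\,dt$ is strictly increasing in $\nu\ge 0$, you get $|Ai'(x)|>\frac{x}{\pi\sqrt{3}}K_{1/2}(\zeta)=\frac{x^{1/4}}{2\sqrt{\pi}}e^{-\zeta}$. The same comparison with $K_{1/3}<K_{1/2}$ is why the bound for $Ai$ itself does hold. Concretely:
\begin{itemize}
\item As $x\to 0^+$ the right side of \eqref{ExB4Airy'} tends to $0$, while $|Ai'(0)|=1/(3^{1/3}\Gamma(1/3))\approx 0.259$.
\item At $x=1$, $|Ai'(1)|\approx 0.1591>e^{-2/3}/(2\sqrt{\pi})\approx 0.1448$.
\item Expanding your own integral one order further gives $|Ai'(x)|=\frac{x^{1/4}e^{-\zeta}}{2\sqrt{\pi}}\bigl(1+\frac{7}{72\zeta}+O(\zeta^{-2})\bigr)$, which is the standard expansion of $Ai'$.
\end{itemize}
So the positive contribution of $u\sin(u^3/3)$ that you flagged is real, not an artifact of crude estimation.

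You should therefore not hunt for the sharp constant. Instead, replace \eqref{ExB4Airy'} by a true inequality and check that it suffices later on. Your integration by parts already produces one. Writing $ue^{-\sqrt{x}u^2}=-\frac{1}{2\sqrt{x}}\frac{d}{du}e^{-\sqrt{x}u^2}$ turns the integrand into $\cos(u^3/3)\bigl(\sqrt{x}+\frac{u^2}{2\sqrt{x}}\bigr)e^{-\sqrt{x}u^2}$, and $|\cos|\le 1$ then gives
\[
|Ai'(x)|\le\frac{x^{1/4}}{2\sqrt{\pi}}\,e^{-\frac{2}{3}x^{3/2}}\Bigl(1+\frac{1}{4x^{3/2}}\Bigr),\qquad x>0.
\]
Your route through $Ai'(x)=-\int_x^\infty tAi(t)\,dt$ gives the same bound once you use $t^{-5/4}\le x^{-5/4}$ in the remainder.

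This correction factor, like your proposed $1+O(x^{-3/2})$, is unbounded as $x\to 0^+$. Near the origin use $|Ai'(x)|\le |Ai'(0)|$ from your Step 1 instead. Together these give $|Ai'(x)|\le C\max\{1,x^{1/4}\}e^{-\frac{2}{3}x^{3/2}}$ on $[0,\infty)$.

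That corrected statement is all the later arguments use. The mean value step in Lemma \ref{keyI2} evaluates $Ai'$ only at points $\ge 1$ and absorbs constants. Proposition \ref{error4d} only needs $\sup_{s\ge s_0}|Ai'(s)|e^{s/2}\max\{|s|^3,1\}<\infty$. So the downstream results survive, but the proposition as stated cannot be proved, by your route or by the citation.
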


\begin{proposition}\label{aux4AiandBi}
    (\cite[Section $11.2$]{olver1997asymptotics}) Let $Bi(x)$ be the other solution of the Airy equation and $c$ be the largest negative root of $Ai(x)=Bi(x)$ (see \cite[Section $11.2.2$]{olver1997asymptotics} for the explicit expression for both $Bi$ and $c$). Define the modulus function $\mathbf{M}$ and weight function $\mathbf{E}$ as:
    \begin{equation*}
        \mathbf{E}(x):=\left\{\begin{array}{cc}
\sqrt{\frac{Bi(x)}{Ai(x)}},~~x>c\\
1,~~~x\le c
        \end{array}\right.~~and~~ \mathbf{M}(x):=\sqrt{\mathbf{E}^2(x)Ai^2(x)+\frac{Bi^2(x)}{\mathbf{E}^2(x)}}.
    \end{equation*}
    Then $|Ai(x)|\le\left|\frac{\mathbf{M}(x)}{\mathbf{E}(x)}\right|$, and $\mathbf{E}$ increasing on $(0,\infty)$. Moreover, as $x\rightarrow\infty$,  $\mathbf{E}(x)\sim\frac{1}{\sqrt2}\exp\left(\frac{2}{3}x^\frac{3}{2}\right)$ and $\mathbf{M}(x)\sim\frac{1}{\pi^\frac{1}{2}x^\frac{1}{4}}$. Introduce the phase parameter $\theta(x):=\arctan\left(\frac{\mathbf{E}^2(x)Ai(x)}{Bi(x)}\right)$. Then $Ai$ and $Bi$ are related by:
    \begin{equation}\label{MandEzero}
        Ai(x)=\frac{\mathbf{M}(x)}{\mathbf{E}(x)}\sin\theta(x),~~Bi(x)=\mathbf{M}(x)\mathbf{E}(x)\cos\theta(x).
    \end{equation}
    In terms of $Ai'$ and $Bi'$, further define a new modulus function $\mathbf{N}$ and phase function $\omega(x)$ by:
        \begin{equation*}
        \begin{split}
                   & \mathbf{N}(x):=\left\{\begin{array}{cc}
\sqrt{\frac{Ai'^2(x)Bi^2(x)+Ai^2(x)Bi'^2(x)}{Ai(x)Bi(x)}},~~x>c\\
\sqrt{Ai'^2(x)+Bi'^2(x)},~~x\le c
        \end{array}\right.\\&   \omega(x):=\left\{\begin{array}{cc}
           \arctan\left(\frac{Ai'(x)Bi(x)}{Ai(x)Bi'(x)}\right),~~x>c\\
           \arctan\left(\frac{Ai'(x)}{Bi'(x)}\right),~~x\le c
        \end{array}\right..
        \end{split}
    \end{equation*}
    Then $Ai'$ and $Bi'$ are related by:
    \begin{equation}\label{NandEfirst}
         Ai'(x)=\frac{\mathbf{N}(x)}{\mathbf{E}(x)}\sin\omega(x),~~Bi'(x)=\mathbf{N}(x)\mathbf{E}(x)\cos\omega(x).
    \end{equation}
\end{proposition}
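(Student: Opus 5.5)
The plan is to verify everything directly from the definitions, splitting at the point $c$. Three standard facts about Airy functions will be used. First, the Wronskian identity $\mathcal{W}\{Ai,Bi\}=Ai\,Bi'-Ai'\,Bi=\pi^{-1}$. Second, $Ai$, $Bi$ and $Bi'$ are all positive on $(c,\infty)$. Here $c\approx-0.37$, while the largest zeros of $Ai$ and $Bi'$ lie near $-2.34$ and $-2.29$. Moreover $Bi>Ai$ on $(c,\infty)$, since $Bi(0)=\sqrt3\,Ai(0)$ and $c$ is the largest negative root of $Ai=Bi$. Third, the large-$x$ asymptotics $Ai(x)\sim\frac{1}{2\sqrt\pi x^{1/4}}e^{-\frac23x^{3/2}}$ and $Bi(x)\sim\frac{1}{\sqrt\pi x^{1/4}}e^{\frac23x^{3/2}}$ (cf. Proposition \ref{bound for Airy function}).

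\textbf{Region $x>c$.} Here $\mathbf{E}^2=Bi/Ai>1$ is well defined, and the rest follows by substitution:
\begin{equation*}
\mathbf{M}^2=\mathbf{E}^2Ai^2+\frac{Bi^2}{\mathbf{E}^2}=2\,Ai\,Bi,\qquad \frac{\mathbf{E}^2Ai}{Bi}=1 .
\end{equation*}
Hence $\theta\equiv\pi/4$, and
\begin{equation*}
\frac{\mathbf{M}}{\mathbf{E}}\sin\theta=\sqrt{2AiBi}\,\sqrt{\frac{Ai}{Bi}}\,\frac{1}{\sqrt2}=Ai,
\end{equation*}
with the analogous computation giving $\mathbf{M}\mathbf{E}\cos\theta=Bi$. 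This proves \eqref{MandEzero} on $(c,\infty)$.

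Monotonicity of $\mathbf{E}$ follows from the Wronskian: $(Bi/Ai)'=\frac{1}{\pi Ai^2}>0$.

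For the asymptotics, $\mathbf{E}^2=Bi/Ai\sim 2e^{\frac43x^{3/2}}$ and $\mathbf{M}^2=2AiBi\sim\frac{1}{\pi x^{1/2}}$. This gives $\mathbf{M}\sim\pi^{-1/2}x^{-1/4}$, and $\mathbf{E}\sim C e^{\frac23x^{3/2}}$ for an explicit constant $C$. From this computation $C=\sqrt2$ rather than $1/\sqrt2$. I will recheck this constant against Olver's normalization before relying on it later; only the exponential order matters for the estimates.

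For the derivatives, the relations \eqref{NandEfirst} are equivalent to two conditions:
\begin{equation*}
\mathbf{N}^2=\mathbf{E}^2Ai'^2+\frac{Bi'^2}{\mathbf{E}^2},\qquad \tan\omega=\frac{\mathbf{E}^2Ai'}{Bi'} .
\end{equation*}
Substituting $\mathbf{E}^2=Bi/Ai$ gives exactly the displayed formula for $\mathbf{N}$ and $\omega=\arctan\frac{Ai'Bi}{AiBi'}$. The principal branch is the correct one. Indeed, $Bi'>0$ there forces $\cos\omega>0$, and $\omega$ lies in $(-\pi/2,\pi/2)$, with $\omega<0$ wherever $Ai'<0$.

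\textbf{Region $x\le c$.} Here $\mathbf{E}=1$, so $\mathbf{M}=\sqrt{Ai^2+Bi^2}$ and $\mathbf{N}=\sqrt{Ai'^2+Bi'^2}$. The identities \eqref{MandEzero} and \eqref{NandEfirst} are then just polar coordinates of the nonvanishing vectors $(Bi,Ai)$ and $(Bi',Ai')$. These vectors never vanish because the Wronskian is nonzero, so $Ai$ and $Bi$ have no common zero, and similarly for $Ai'$ and $Bi'$.

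The only delicate point, and what I expect to be the main obstacle, is choosing the branch of $\arctan$. Both $Bi$ and $Bi'$ change sign on $(-\infty,c]$. So $\theta$ and $\omega$ must be taken as the continuous determinations of the polar angle, matched at $x=c$ with the values from the region $x>c$. Continuity at $c$ holds because $Ai(c)=Bi(c)$ forces $\mathbf{E}(c)=1$ from both sides.

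\textbf{The bound.} Finally, $|Ai|\le\mathbf{M}/\mathbf{E}$ holds on all of $\mathbb{R}$, since $\mathbf{M}^2\ge\mathbf{E}^2Ai^2$ by the definition of $\mathbf{M}$.
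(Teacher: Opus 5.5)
Your verification is correct. The paper gives no argument of its own for this proposition; it imports it wholesale from Olver, Section 11.2. So the only difference is that your proof is self-contained. It uses substitution of $\mathbf{E}^2=Bi/Ai$ on $(c,\infty)$, the Wronskian $Ai\,Bi'-Ai'\,Bi=\pi^{-1}$ for monotonicity and for non-vanishing of $(Bi,Ai)$ and $(Bi',Ai')$, and polar coordinates on $(-\infty,c]$.

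The citation is shorter and also supplies global bounds the paper needs later (e.g.\ finiteness of $\lambda_0$). Your derivation instead exposes two defects in the statement as written, and you handle both correctly.

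First, your constant is right. Since $Bi/Ai\sim 2e^{\frac43 x^{3/2}}$, one gets $\mathbf{E}(x)\sim\sqrt2\,e^{\frac23 x^{3/2}}$, so the $1/\sqrt2$ in the statement is a misprint. It is harmless downstream, because Proposition \ref{ex1} only uses a lower bound $\mathbf{E}(x)\ge c_0e^{\frac23 x^{3/2}}$.

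Second, your continuous-branch reading of $\theta$ and $\omega$ on $(-\infty,c]$ is necessary, not optional. The principal $\arctan$ gives $\cos\theta>0$. That contradicts $Bi=\mathbf{M}\cos\theta$ on $(-3.27,-1.17)$, the interval between the two largest zeros of $Bi$, where $Bi<0$. The same failure occurs for $\omega$ wherever $Bi'<0$.

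One small repair is needed. Knowing that $c$ is the largest negative root of $Ai=Bi$ only gives $Bi>Ai$ on $(c,0]$. For $x>0$, use your own monotonicity step instead: $(Bi/Ai)'=1/(\pi Ai^2)>0$ on $(a_1,\infty)$, where $a_1\approx-2.34$ is the largest zero of $Ai$, together with $Bi(c)/Ai(c)=1$. That same step shows $\mathbf{E}$ is nondecreasing on all of $\mathbb{R}$: constant up to $c$, continuous there, and increasing afterwards. This is what \eqref{B4K1} actually needs, since the arguments there may be negative.
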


The following simple bound derived from the Laguerre recurrence in \cite{szego1975orthogonal} will be used when dealing with the large $x$ region for $F_{n,N}(x)$ later.
\begin{lemma}\label{B4LagPoly}
For any $n,~a\in\mathbb{N}$ and $x\ge0$, the Laguerre polynomials are bounded as
\begin{equation}\label{B4Lag'}
        \left|L_n^a(x)\right|\le\left(\begin{matrix}
            n+a\\n
        \end{matrix}\right)e^{\frac{x}{2}}.
\end{equation}
\end{lemma}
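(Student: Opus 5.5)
The bound \eqref{B4Lag'} should follow directly from a generating-function/integral representation of the Laguerre polynomials, with no recurrence estimates needed. I plan to use the classical integral formula (Szeg\H{o}, Chapter 5)
\begin{equation*}
e^{-x}x^{a/2}L_n^a(x)=\frac{1}{n!}\int_0^\infty e^{-t}t^{n+\frac{a}{2}}J_a\left(2\sqrt{tx}\right)dt,
\end{equation*}
valid for $a>-1$. Combined with $|J_a(y)|\le \frac{(y/2)^a}{\Gamma(a+1)}$ for $y\ge0$ and $a\ge0$, this gives $|J_a(2\sqrt{tx})|\le \frac{(tx)^{a/2}}{a!}$. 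Hence
\begin{equation*}
e^{-x}x^{a/2}|L_n^a(x)|\le \frac{x^{a/2}}{n!\,a!}\int_0^\infty e^{-t}t^{n+a}dt=\frac{(n+a)!}{n!\,a!}x^{a/2}.
\end{equation*}
That is, $|L_n^a(x)|\le\binom{n+a}{n}e^{x}$. This is weaker than the claim by a factor $e^{x/2}$, so I need a sharper route.

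For the sharper exponent I would instead use the representation as a convolution of ordinary Laguerre polynomials, $L_n^{a}(x)=\sum_{k=0}^n \binom{n-k+a-1}{n-k}L_k^{0}(x)$. This follows from $\sum_n L_n^a(x)w^n=(1-w)^{-a-1}e^{-xw/(1-w)}=(1-w)^{-a}\sum_k L_k^0(x)w^k$. It reduces the problem to the classical bound $|L_k^0(x)|\le e^{x/2}$ for $x\ge0$ (Szeg\H{o}, (7.21.3)). Since all the binomial coefficients are nonnegative,
\begin{equation*}
|L_n^a(x)|\le e^{x/2}\sum_{k=0}^n\binom{n-k+a-1}{n-k}=e^{x/2}\binom{n+a}{n},
\end{equation*}
where the last step is the hockey-stick identity, i.e. the coefficient of $w^n$ in $(1-w)^{-a}(1-w)^{-1}$. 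For $a=0$ the statement is exactly the base bound, so the argument covers all $a\in\mathbb{N}$.

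The main obstacle is justifying $|L_k^0(x)|\le e^{x/2}$ if it is not simply cited. For that I would use the Fourier/Bessel representation $e^{-x/2}L_k^0(x)=\frac{1}{2\pi}\int$ of a unimodular factor. Equivalently, $e^{-x/2}L_k^0(x)$ is the $(k,k)$ matrix entry of a unitary (contraction) operator, as in the harmonic-oscillator representation, which bounds it by $1$. Alternatively, a Sturm-type energy argument works: the function $E(x)=(e^{-x/2}L_k)^2+\frac{x}{k+\frac12}\left((e^{-x/2}L_k)'\right)^2$ is nonincreasing, which follows from the differential equation $xy''+(1-x)y'+ky=0$, and $E(0)=1$. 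I would carry out this monotonicity computation explicitly as the one nonroutine step.
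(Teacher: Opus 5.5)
The paper gives no argument for this lemma; it imports the classical inequality from Szeg\H{o} (it is (7.21.3) there). Your reduction to $a=0$ is correct and makes the general case self-contained. The generating function gives $L_n^a=\sum_{k=0}^n\binom{n-k+a-1}{n-k}L_k^0$ with nonnegative coefficients; this is the $y=0$ case of the addition formula $L_n^{(\alpha+\beta+1)}(x+y)=\sum_{\nu}L_\nu^{(\alpha)}(x)L_{n-\nu}^{(\beta)}(y)$ with $\alpha=0$, $\beta=a-1$. The hockey-stick sum is then exactly $\binom{n+a}{n}$. You are also right to use the standard normalization $L_n^a(0)=\binom{n+a}{n}$. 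With the monic normalization written in the paper's definition of $\psi_{L_a,j}$, the bound would be false by a factor $n!$, and the rest of the paper in fact uses the standard one. So if you simply cite $|L_k^0(x)|\le e^{x/2}$, your proof is complete.

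The self-contained proof you single out for the base case, however, fails. With $u=e^{-x/2}L_k^0$ and $\nu=k+\frac12$, the Laguerre equation becomes $xu''+u'+(\nu-\frac{x}{4})u=0$. For your $E=u^2+\frac{x}{\nu}(u')^2$ this gives $\nu E'=u'\left(\frac{x}{2}u-u'\right)$, which has no sign. Already for $k=1$, where $u=e^{-x/2}(1-x)$, one gets $E'(x)=\frac16e^{-x}(x-3)(3-x^2)>0$ on $(\sqrt3,3)$.

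Correcting the weight does not rescue it. The Sonin function $u^2+\frac{x}{\nu-x/4}(u')^2$ has derivative $-(u')^2\frac{\nu-x/2}{(\nu-x/4)^2}$, so it decreases only on $[0,2\nu]$. On $(2\nu,4\nu)$ the successive maxima of $|u|$ genuinely increase toward the turning point $x=4\nu$, so an energy argument of this type needs a separate estimate there and is not a routine computation.

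The route that does work is the one you mention only in passing: $e^{-x/2}L_k^0(x)=\langle D(\sqrt{x})e_k,e_k\rangle$. Here $D(z)$ is the unitary displacement (Weyl) operator on Fock space and $e_k$ the $k$th number state; equivalently, this is the Fourier--Wigner transform of the Hermite function $h_k$ with itself. Cauchy--Schwarz then gives the bound $1$. Either write that argument out or cite Szeg\H{o}; the phrase about a unimodular Fourier factor is too vague to count as a proof.
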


According to \eqref{F2h}, the large $N$ expansion for $\xi_\tau$ and $\eta_\tau$ can be established by estimating $h_{j,k}$. Since
\begin{equation*}
     h_{n,N}(x)=F_{n,N}(x)\frac{\mu_{n,N}}{x}
     =r_N\left(\frac{\kappa_N}{\sigma_{n,N}^3}\right)^{\frac{1}{6}}\frac{\mu_{n,N}}{x}\tilde{f}^{-\frac{1}{4}}(z)\left(Ai\left(\kappa_N^{\frac{2}{3}}z\right)+\epsilon_2(\kappa_N,z)\right)
\end{equation*}
according to the L$-$G method, handling each term on the far right regarding ideal order yields the needed expansion. The term $r_N$ can be dropped without any order problem since $r_N=1+O\left(N^{-1}\right)$ by Lemma \ref{c and r}. Now consider the difference between $(-1)^Nh_{n,N}$ and $Ai(s)$. We assume $N$ is odd because the even case is similar. Let $x_N(s):=\mu_{n,N}-\sigma_{n,N}s$ be the ``standard" scaling with respect to $F_{n,N}$. Then
\begin{equation*}
\begin{split}
       \Delta_{n,N}(x_N(s))&:=|-\theta_{n,N}h_{n,N}(x_N(s))+Ai(s)|\\&\le \theta_{n,N}|r_N Ai(s)-h_{n,N}(x_N(s))|+|1-r_N \theta_{n,N}||A(s)|\\
       &:=\theta_{n,N}\Delta'_{n,N}(s)+\Delta''_{n,N}(s).
\end{split}
\end{equation*}
Since $\theta_{n-1,N}=1+O\left(N^{-1}\right)$ by Lemma \ref{order}, we can replace $\theta_{n,N}$ by $1$ without causing any order problem. Moreover, the boundedness of $Ai(s)$ itself (see \eqref{bound for Airy function}) indicates $N\Delta''_{n,N}(s)\le C(s_0)e^{-\frac{s}{2}}$ for the second term. Thus, only the term $\Delta'_{n,N}$ matters in our argument. Further define:
\begin{equation*}\begin{split}
     \Delta'_{n,N}(s)&\le \left|h_{n,N}(x_N(s))-r_NAi\left(\kappa_N^\frac{2}{3}\zeta(x_N(s))\right)\right|+r_N\left|Ai\left(\kappa_N^\frac{2}{3}\zeta(x_N(s))\right)-Ai(s)\right|\\
      &:= B_{n,N}(x_N(s))+r_N|D_{n,N}(x_N(s))|
\end{split}
\end{equation*} and $e_0:=1-\frac{1}{e}\in\left(0,\frac{2}{3}\right)$.
We have the following results involving $B_{n,N}$ and $|D_{n,N}|$.
\begin{lemma}\label{keyI}
    Given a fixed $s_0\in\mathbb{R}$, assume the scaling $(\tilde{\mu}_{n,N},\tilde\sigma_{n,N})$ satisfies the order restriction in \eqref{restriction of recaling}. Then under the assumption that $\frac{N}{n}\rightarrow\gamma\in(0,1)$, we have:
    \begin{equation*}
        N^{\frac{2}{3}}B_{n,N}(x_N(s))\le C(s_0,\gamma)e^{-\frac{s}{2}}
    \end{equation*}
    for any $s\in\left[s_0,\frac{e_0\tilde{\mu}_{n,N}}{\tilde{\sigma}_{n,N}}\right]$ when $N$ is large enough. Furthermore, with  $\tilde{x}_N(s):=\tilde{\mu}_{n,N}-\tilde{\sigma}_{n,N}s$,
    \begin{equation*}
        N^{\frac{2}{3}}B_{n-1,N}(\tilde{x}_N(s)),~N^{\frac{2}{3}}B_{n,N-1}(\tilde{x}_N(s))\le C(s_0,\gamma)e^{-\frac{s}{2}}
    \end{equation*}
    for $N$ large.
\end{lemma}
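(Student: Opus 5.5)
The plan is to bound $B_{n,N}(x_N(s))$ with Olver's error bounds for the Liouville--Green approximation around the turning point $z_1$ (\cite[Chapter 11]{olver1997asymptotics}), exactly as El Karoui does at the right edge. By \eqref{w2} and the definition of $h_{n,N}$,
\begin{equation*}
B_{n,N}(x_N(s))\le r_N\left(\frac{\kappa_N}{\sigma_{n,N}^3}\right)^{\frac16}\frac{\mu_{n,N}}{x}\tilde f^{-\frac14}(z)\,|\epsilon_2(\kappa_N,z)| +r_N\left|\left(\frac{\kappa_N}{\sigma_{n,N}^3}\right)^{\frac16}\frac{\mu_{n,N}}{x}\tilde f^{-\frac14}(z)-1\right| Ai\!\left(\kappa_N^{\frac23}\zeta\right),
\end{equation*}
where $x=x_N(s)=\kappa_N z$. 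So there are two pieces: an \emph{error-function piece} and a \emph{prefactor piece}.

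\textbf{Error-function piece.} Olver's theorem gives
\begin{equation*}
|\epsilon_2|\le \frac{\mathbf M(\kappa_N^{2/3}\zeta)}{\mathbf E(\kappa_N^{2/3}\zeta)}\left(\exp\!\left(\lambda_0\kappa_N^{-1}\mathcal V(\Psi\zeta^{-1/2})\right)-1\right),
\end{equation*}
with $\lambda_0$ an absolute constant and $\mathcal V$ the total variation of the error-control function $\int|\Psi|\,|\zeta|^{-1/2}d\zeta$ taken over $[\zeta,\infty)$. In our setting this is the range $z\in(0,z(s)]$, since $s\ge s_0$ corresponds to moving left of $z_1$ toward $0$.
\begin{itemize}[leftmargin=*]
\item The main obstacle is showing that $\mathcal V$ is bounded by a constant $C(\gamma)$ uniformly in $N$.
\item $\Psi$ is analytic near $z_1$, so the turning point itself causes no trouble.
\item Near $z=0$, however, $g/\tilde f=-\zeta/(4z^2f)$ and $f\sim z_1z_2/(4z^2)$, so $\Psi$ is singular. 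Here I will use $\zeta\sim(\tfrac32\int_z^{z_1}f^{1/2})^{2/3}\sim c\,|\log z|^{2/3}$ to show that $|\Psi|\zeta^{-1/2}d\zeta$ remains integrable on $(0,z_1]$.
\item This is precisely why $s$ is restricted to $[s_0,e_0\tilde\mu/\tilde\sigma]$: it keeps $z\ge z_1/e$, away from $0$. On that compact range all constants depend only on $\omega_N\in[\delta,4-\delta]$, and hence only on $\gamma$.
\end{itemize}
Given this, $\epsilon_2=O(N^{-1})\,\mathbf M/\mathbf E$.

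\textbf{Prefactor piece.} I will Taylor expand $\zeta$ at $z_1$:
\begin{equation*}
\kappa_N^{2/3}\zeta(x_N(s)/\kappa_N)=s+O\!\left(s^2N^{-2/3}\right).
\end{equation*}
This uses $\sigma_{n,N}/\kappa_N\asymp N^{-2/3}$ (with a $\gamma$-dependent constant) and the definition $\sigma=\kappa_N^{1/3}/\zeta'(z_1)$. Similarly,
\begin{equation*}
\left(\frac{\kappa_N}{\sigma^3}\right)^{1/6}\frac{\mu}{x}\tilde f^{-1/4}(z)=1+O\!\left((1+|s|)N^{-2/3}\right),
\end{equation*}
because $\tilde f^{-1/4}(z_1)=\zeta'(z_1)^{-1/2}$ matches the normalization exactly and $\mu/x=1+O(sN^{-2/3})$. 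Both estimates hold uniformly on $z\in[z_1/e,z_1]$, with derivative bounds of $\zeta$ depending on $\gamma$.

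\textbf{Converting to $e^{-s/2}$.} The polynomial factors in $s$ are turned into exponential decay as follows.
\begin{itemize}[leftmargin=*]
\item $\zeta$ is increasing in $s$ and $\kappa_N^{2/3}\zeta\ge c(\gamma)s$ on the range. The bounds of Propositions \ref{bound for Airy function} and \ref{aux4AiandBi} give $\mathbf M/\mathbf E(t)$ and $Ai(t)\lesssim e^{-\frac23 t^{3/2}}$, so for $t\ge c(\gamma)s$ we get $(1+s^2)\,\mathbf M/\mathbf E\le C e^{-s/2}$.
\item For negative $s\ge s_0$ the $Ai$ and $\mathbf M/\mathbf E$ factors are bounded by a constant, which is absorbed into $C(s_0,\gamma)e^{-s/2}$.
\item The resulting constant is non-increasing in $s_0$, since it arises as a supremum over $[s_0,\infty)$.
\end{itemize}
Combining the two pieces gives $N^{2/3}B_{n,N}\le C(s_0,\gamma)e^{-s/2}$.

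\textbf{Shifted scalings.} For $B_{n-1,N}(\tilde x_N(s))$ and $B_{n,N-1}(\tilde x_N(s))$, I will write
\begin{equation*}
\tilde x_N(s)=\mu_{n-1,N}-\sigma_{n-1,N}s',\qquad s'=\frac{\tilde\sigma s+\mu_{n-1,N}-\tilde\mu}{\sigma_{n-1,N}}.
\end{equation*}
By \eqref{restriction of recaling}, $s'=s(1+O(N^{-1}))+O(N^{-2/3})$. This lets me apply the first statement with the parameters $(n-1,N)$, and $e^{-s'/2}\le Ce^{-s/2}$ for large $N$. The range condition remains valid up to slightly adjusting $e_0$ (or for large $N$), and the constants still depend only on $\gamma$, since $(n-1)/N$ has the same limit.
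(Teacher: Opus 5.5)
\textbf{Gap: bounding the error-control function $\mathcal V$.} The gap is in the step you flag as the main obstacle, the uniform bound on $\mathcal V(\zeta)=\int_\zeta^\infty|\Psi(t)|\,|t|^{-1/2}dt$.

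You assert that near $z=0$ the term $g/\tilde f=g\zeta/f\approx-\zeta/(z_1z_2)$ makes $\Psi$ singular. You then propose to recover integrability from $\zeta\sim c|\log z|^{2/3}$. That cannot work. A term of size $\zeta$ gives $\int^\infty\zeta^{1/2}\,d\zeta=\infty$. In the $z$ variable, since $|d\zeta/dz|\asymp 1/(z|\log z|^{1/3})$, it gives $\int_0 dz/z$. The logarithmic growth of $\zeta$ does not rescue it.

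The missing idea is a cancellation. Write $\Psi=\frac{5}{16\zeta^2}+\frac{\zeta}{16f^3}(4ff''-5f'^2)+\frac{g\zeta}{f}$. As $z\to0$ the middle term equals $+\zeta/(z_1z_2)+O(z\zeta)$. It exactly cancels the $-\zeta/(z_1z_2)$ part of $g\zeta/f$. This is what the splitting $g=-1/(4z^2)$ (Olver's treatment of the double pole) is designed to achieve. What remains is $\Psi=\frac{5}{16\zeta^2}+O(z\zeta)$, with $z$ exponentially small in $\zeta^{3/2}$. Hence $\Psi\sim\frac{5}{16}\zeta^{-2}$, and $\mathcal V$ is finite uniformly for $\omega_N$ in a compact set; this is the content of Propositions~\ref{Psi} and~\ref{ex2}.

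Relatedly, your remark that the restriction $s\le e_0\tilde\mu_{n,N}/\tilde\sigma_{n,N}$ ``keeps $z$ away from $0$'' is beside the point for $\mathcal V$. That integral runs over $[\zeta,\infty)$, i.e.\ over all of $(0,z]$, whatever $s$ is. The restriction is what makes $\mu_{n,N}/x_N(s)$, $\tilde f^{-1/4}$ and your Lipschitz constants uniformly bounded. It does nothing for the error-control function.

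\textbf{The rest of the argument.} Once that step is repaired, the rest is sound and somewhat more economical than the paper's.

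For the prefactor, you use a single mean-value bound $|\phi(\epsilon)-1|\le C(\gamma)|\epsilon|$ on the whole compact range $z\in[z_1/e,z_1]$, together with $\kappa_N^{2/3}\zeta\ge c(\gamma)s$. The paper instead splits into $[s_0,s_0']$, $[s_0',N^{1/6}]$ and $[N^{1/6},e_0\tilde\mu_{n,N}/\tilde\sigma_{n,N}]$. It uses the small-$\epsilon$ expansion of Lemma~\ref{seq} on the first two pieces and the crude bound $N^{2/3}e^{-s}\le e^{-s/2}$ on the last.

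For the shifted scalings, you observe that $B_{n-1,N}$ depends on $x$ alone and reparametrize $\tilde x_N(s)=x_{n-1,N}(s')$. The paper instead redoes the expansions with $\tilde\epsilon_{n-1,N}$.

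One slip: $(\mu_{n-1,N}-\tilde\mu_{n,N})/\sigma_{n-1,N}$ is $O(N^{-1/3})$, not $O(N^{-2/3})$. This is harmless, because $|s'-s|$ is still uniformly bounded on the whole range. So $e^{-s'/2}\le Ce^{-s/2}$ and your range adjustment go through unchanged.
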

\begin{proof}
   See Appendix \ref{P4I1}. 
\end{proof}

\begin{lemma}\label{keyI2}
    Under the same assumption as in the previous lemma, for any $s\in\left[s_0,\frac{e_0\tilde{\mu}_{n,N}}{\tilde{\sigma}_{n,N}}\right]$, when $N$ is large enough,
    \begin{equation*}
        N^{\frac{2}{3}}|D_{n,N}(x_N(s))|\le C(s_0,\gamma)e^{-\frac{s}{2}}
    \end{equation*}
    where $C$ is a constant determined by $s_0$ and $\gamma$
    \end{lemma}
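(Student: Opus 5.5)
The plan is to treat $D_{n,N}(x_N(s))=Ai\bigl(\kappa_N^{2/3}\zeta(z)\bigr)-Ai(s)$ as a perturbation of the argument of $Ai$. Here $z:=x_N(s)/\kappa_N=z_1-\sigma_{n,N}s/\kappa_N$. I would apply the mean value theorem and then control the argument error by a second‑order Taylor expansion of the Liouville--Green variable $\zeta$ at the turning point $z_1$.

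\textbf{Step 1: expand the argument.} By the construction of Section \ref{sec: L-G method}, $\mu_{n,N}=\kappa_Nz_1$, and $\sigma_{n,N}$ is chosen exactly so that $\kappa_N^{2/3}|\zeta'(z_1)|\,\sigma_{n,N}/\kappa_N=1$, with the sign convention fixed there. Since $\zeta(z_1)=0$, Taylor's theorem gives
\begin{equation*}
\kappa_N^{2/3}\zeta(z)-s=\tfrac12\kappa_N^{2/3}\zeta''(\tilde z)\,\sigma_{n,N}^2\kappa_N^{-2}s^2
\end{equation*}
for some $\tilde z$ between $z$ and $z_1$. Because $\omega_N\in[\delta,4-\delta]$, the turning point $z_1$ is bounded away from $0$ and from $z_2$ by constants depending only on $\gamma$. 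Also, $\sigma_{n,N}\asymp N^{1/3}$ and $\kappa_N\asymp N$ with $\gamma$-dependent constants. The range $s\in[s_0,e_0\tilde\mu_{n,N}/\tilde\sigma_{n,N}]$ corresponds (using \eqref{restriction of recaling} to pass from $\tilde\mu,\tilde\sigma$ to $\mu_{n,N},\sigma_{n,N}$ at cost $O(1)$) to $z\in[z_1/e-o(1),\,z_1+O(N^{-2/3})]$. This is a compact subinterval of $(0,z_2)$ on which $\zeta$ is analytic. Hence $|\zeta''(\tilde z)|\le C(\gamma)$, and the right-hand side above is at most $C(\gamma)s^2N^{-2/3}$.

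\textbf{Step 2: mean value theorem with decay.} We have $|D_{n,N}|=|Ai'(\xi)|\,|\kappa_N^{2/3}\zeta(z)-s|$ with $\xi$ between $s$ and $\kappa_N^{2/3}\zeta(z)$. For $s\in[s_0,1]$, $\xi$ ranges over a bounded set, since the argument error is $O(N^{-2/3})$ there. So $|Ai'(\xi)|\le C(s_0)$, and the bound $C(s_0,\gamma)e^{-s/2}N^{-2/3}$ follows trivially; $C$ can be taken non-increasing in $s_0$ because $|Ai'|$ on a larger left interval only grows. For $s\ge1$, note $\kappa_N^{2/3}\zeta(z)/s=\zeta(z)/\bigl(\zeta'(z_1)(z-z_1)\bigr)$. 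This is a continuous positive function of $z$ on $[z_1/e,z_1]$ (positive since $\zeta$ is monotone with a simple zero at $z_1$), hence bounded below by some $c(\gamma)>0$. Thus $\xi\ge \min(1,c(\gamma))\,s$. By \eqref{ExB4Airy'} and monotonicity of $|Ai'|$ on $\mathbb{R}^+$,
\begin{equation*}
|Ai'(\xi)|\le C\,s^{1/4}\exp\bigl(-\tfrac23 c'^{3/2}s^{3/2}\bigr).
\end{equation*}
Multiplying by $C(\gamma)s^2N^{-2/3}$ and absorbing $s^{9/4}e^{-cs^{3/2}}\le Ce^{-s/2}$ gives the claim.

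\textbf{Main obstacle.} The delicate point is the bookkeeping in Step 1: relating the $s$-interval, defined through $(\tilde\mu_{n,N},\tilde\sigma_{n,N})$, to a $z$-interval in the standard scaling $(\mu_{n,N},\sigma_{n,N})$ for the index pairs $(n-1,N)$ and $(n,N-1)$ actually used. One must check that the lower $z$-endpoint stays uniformly away from $0$, where $\zeta''$ blows up like $z^{-1}$-type singularities. It must also stay away from $z_2$, and the constants $c(\gamma)$ and $C(\gamma)$ in the lower bound for $\zeta(z)/(z-z_1)$ and the upper bound for $\zeta''$ must be uniform in $N$. I would handle this by writing $\zeta$ explicitly via \eqref{LG1} as a function of $(z,\omega_N)$, jointly continuous on the compact set $[z_1/e-\epsilon,\,z_1+\epsilon]\times[\delta,4-\delta]$, and taking suprema and infima there. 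This is also where the unavoidable dependence on $\gamma$ (as opposed to a uniform bound) enters.
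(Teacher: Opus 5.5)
Your argument is correct, but it is organized differently from the paper's. The paper splits $\left[s_0,\frac{e_0\tilde\mu_{n,N}}{\tilde\sigma_{n,N}}\right]$ into three pieces:
\begin{itemize}
\item On $[s_0,2s_0']$ it applies the mean value theorem with $\sup|Ai'|$ over a bounded set.
\item On $[2s_0',N^{1/6}]$ it uses the local expansion $\kappa_N^{2/3}\zeta=s+\frac{2\sigma_{n,N}s_N}{5\kappa_N}s^2+O(s^3\sigma_{n,N}^2\kappa_N^{-2})$ from Lemma \ref{seq}, together with $\min\{\kappa_N^{2/3}\zeta,s\}\ge s/2$. The expansion is valid there because $\epsilon_N(s)=O(N^{-1/2})$, which is the reason for the cutoff $N^{1/6}$.
\item On $\left[N^{1/6},\frac{e_0\tilde\mu_{n,N}}{\tilde\sigma_{n,N}}\right]$ it drops the mean value theorem. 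It bounds $N^{2/3}Ai(\kappa_N^{2/3}\zeta)$ and $N^{2/3}Ai(s)$ separately, using Proposition \ref{B4z} (through \eqref{Aizeta}) and $N^{2/3}\le e^{s/2}$ for $s\ge N^{1/6}$.
\end{itemize}

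You replace all of this with one global second-order Taylor bound for $\zeta$ on a $z$-interval bounded away from $0$ and $z_2$. You add the compactness lower bound $\zeta(z)/(\zeta'(z_1)(z-z_1))\ge c(\gamma)$. Together these give $|\kappa_N^{2/3}\zeta-s|\le C(\gamma)s^2N^{-2/3}$ and $\kappa_N^{2/3}\zeta\ge c(\gamma)s$ on the whole range. A single mean-value step against the superexponential decay of $|Ai'|$ then finishes, with no $N^{1/6}$ cutoff and no separate far region.

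The price is that you rely on two facts the paper does not state. First, $\zeta$ is analytic through the simple turning point $z_1$, with $\zeta'\neq 0$ on $(0,z_2)$. Second, joint continuity in $(z,\omega)$ is needed to make the constants uniform in $N$. Both are standard from Olver's theory. The paper instead stays within Lemma \ref{seq} and Proposition \ref{B4z}, the same tools it uses for Lemmas \ref{keyI} and \ref{re4D}.

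One small correction: the range $\omega_N\in[\delta,4-\delta]$ you quote is a typo carried over from the paper. We have $\omega_N=a/\kappa_N<2$ and $\omega_N\to 2(1-\gamma)/(1+\gamma)\in(0,2)$, and for $\omega>2$ the turning points are not real. So the parameter range should be $\omega\in[\delta,2-\delta]$. The compact set for your suprema and infima is then $\{(z,\omega):\omega\in[\delta,2-\delta],\ z_1(\omega)/e-\epsilon\le z\le z_1(\omega)+\epsilon\}$, with $\epsilon$ small enough that it stays inside $\{0<z<z_2(\omega)\}$.
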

 \begin{proof}
   See Appendix \ref{P4I2}.  
 \end{proof}
Since the main term of $\xi_\tau$ is $h_{n-1,N}$ and $\eta_\tau$ is $h_{n,N-1}$, Lemma \ref{keyI} implies that the scaling $\tau$ with the conditions as in Lemma \ref{order} does not influence our rate $N^\frac{2}{3}$ for the $B$ part in neither $\xi_{\tau}$ nor $\eta_{\tau}$. However, similar reasoning fails for the $D$ part as in \cite[Section $3.4.2$ ]{el2006rate}. To be more specific, directly replacing $x_{n-1,N}(s)$ or $x_{n,N-1}(s)$ by $\tilde{x}_N(s)$ in Lemma \ref{keyI2} will prevent us from an order higher than $N^\frac{1}{3}$. Thus, a more concrete construction of $\tilde{\mu}_{n,N}$ and $\tilde{\sigma}_{n,N}$ is essential.

Noticing that
\begin{equation*}
\begin{split}
&|2Ai(s)-\theta_{n-1,N}h_{n-1,N}(\tilde{x}_N(s))-\theta_{n,N-1}h_{n,N-1}(\tilde{x}_N(s))|\\&
\le\Delta''_{n-1,N}+\Delta''_{n,N-1}+B_{n-1,N}(\tilde{x}_N(s))+B_{n,N-1}(\tilde{x}_N(s))\\
&~~+r_N|\theta_{N-1,n}D_{N-1,n}(\tilde{x}_N(x))+\theta_{N,n-1}D_{N,n-1}(\tilde{x}_N(s))|
\end{split}
\end{equation*}
and $r_N=1+O(N^{-1})$, to obtain the required order, we aim for $\tilde{\mu}_{n,N}$ and $\tilde{\sigma}_{n,N}$ to satisfy:
\begin{equation*}
\begin{split}
       N^{\frac{2}{3}}D(\tilde{x}_N(s))&:=N^{\frac{2}{3}}|\theta_{N-1,n}D_{N-1,n}(\tilde{x}_N(x))+\theta_{N,n-1}D_{N,n-1}(\tilde{x}_N(s))|\\&\le C(s_0,\gamma)e^{-\frac{s}{2}}
\end{split}
\end{equation*}
when $N$ is large.
The following lemma provides us with such an explicit pair.
\begin{lemma}\label{re4D}
Under the same assumption as in Lemma \ref{keyI}, let $\tilde{x}_N(s)=\tilde{\mu}_{n,N}-\tilde{\sigma}_{n,N}s$ with
\begin{equation}\label{recenter}
    \tilde{\mu}_{n,N}=\left(\frac{1}{\sigma_{n-1,N}^{1/2}}+\frac{1}{\sigma_{n,N-1}^{1/2}}\right)\left(\frac{1}{\mu_{n-1,N}\sigma_{n-1,N}^{1/2}}+\frac{1}{\mu_{n,N-1}\sigma_{n,N-1}^{1/2}}\right)^{-1}
\end{equation}
and
\begin{equation}\label{rescal}
    \tilde{\sigma}_{n,N}=\left(\frac{\sigma_{n-1,N}^{1/2}}{\mu_{n-1,N}}+\frac{\sigma_{n,N-1}^{1/2}}{\mu_{n,N-1}}\right)\left(\frac{1}{\mu_{n-1,N}\sigma_{n-1,N}^{1/2}}+\frac{1}{\mu_{n,N-1}\sigma_{n,N-1}^{1/2}}\right)^{-1}.
\end{equation} 
Then
\begin{equation*}
    N^{\frac{2}{3}}D(\tilde{x}_N(s))\le C(s_0,\gamma)e^{-\frac{s}{2}}
\end{equation*}
holds for any $s\in\left[s_0,\frac{e_0\tilde{\mu}_{n,N}}{\tilde{\sigma}_{n,N}}\right]$ when $N$ large enough.
\end{lemma}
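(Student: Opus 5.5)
We follow El Karoui's argument at the right edge \cite[Section $3.4.2$]{el2006rate}, adapted to the left-edge L--G variable $\zeta=\zeta^{\{1\}}$. For a pair $(j,k)\in\{(n-1,N),(n,N-1)\}$, write $D_{j,k}(\tilde x_N(s))=Ai\bigl(\kappa^{2/3}\zeta_{j,k}(\tilde x_N(s)/\kappa)\bigr)-Ai(s)$, where $\kappa$ and $\zeta_{j,k}$ are the L--G quantities attached to $F_{j,k}$. The plan is to Taylor expand the argument
\[
u_{j,k}(s):=\kappa^{2/3}\zeta_{j,k}\bigl(\tilde x_N(s)/\kappa\bigr)
\]
around the turning point $\mu_{j,k}$. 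By construction of $(\mu_{j,k},\sigma_{j,k})$, the map $s\mapsto\kappa^{2/3}\zeta_{j,k}\bigl((\mu_{j,k}-\sigma_{j,k}s)/\kappa\bigr)$ equals $s+O(N^{-2/3}s^2)$ on $[s_0,e_0\tilde\mu_{n,N}/\tilde\sigma_{n,N}]$. Since $\tilde x_N(s)=\mu_{j,k}-\sigma_{j,k}\bigl(s\,\tilde\sigma/\sigma_{j,k}+(\mu_{j,k}-\tilde\mu)/\sigma_{j,k}\bigr)$, we get
\[
u_{j,k}(s)=s+\alpha_{j,k}+\beta_{j,k}s+R_{j,k}(s),
\]
with $\alpha_{j,k}=(\mu_{j,k}-\tilde\mu)/\sigma_{j,k}$ and $\beta_{j,k}=\tilde\sigma/\sigma_{j,k}-1$. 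The restrictions \eqref{restriction of recaling}, with $\sigma_{j,k}\asymp N^{1/3}$ (the constant depending on $\gamma$), give $\alpha_{j,k},\beta_{j,k}=O(N^{-1/3})$ and $R_{j,k}=O\bigl(N^{-2/3}(1+s^2)\bigr)$.

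By the mean value theorem,
\[
D_{j,k}=Ai'(s)\bigl(\alpha_{j,k}+\beta_{j,k}s\bigr)+O\Bigl(\sup|Ai''|\cdot\bigl(N^{-2/3}(1+s^2)+(\alpha+\beta s)^2\bigr)\Bigr).
\]
The quadratic remainders are already $O(N^{-2/3})$ times a polynomial in $s$ times the sup of $|Ai'|$, $|Ai''|$ over an interval between $s$ and $u_{j,k}(s)$. The first-order terms are only $O(N^{-1/3})$, so the key step is cancellation in the weighted sum
\[
\theta_{n-1,N}D_{n-1,N}+\theta_{n,N-1}D_{n,N-1}.
\]
Using Lemma~\ref{order}, $\theta_{j,k}\propto\sigma_{j,k}^{1/2}/\mu_{j,k}$ up to a common factor, so the linear term is proportional to
\[
Ai'(s)\sum_{(j,k)}\frac{\sigma_{j,k}^{1/2}}{\mu_{j,k}}\Bigl(\frac{\mu_{j,k}-\tilde\mu}{\sigma_{j,k}}+\frac{\tilde\sigma-\sigma_{j,k}}{\sigma_{j,k}}s\Bigr).
\]
Requiring the constant coefficient and the coefficient of $s$ to vanish separately gives two linear equations in $\tilde\mu$ and $\tilde\sigma$. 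Their solutions are exactly \eqref{recenter} and \eqref{rescal}: setting $\sum \mu_{j,k}^{-1}\sigma_{j,k}^{-1/2}(\mu_{j,k}-\tilde\mu)=0$ yields \eqref{recenter}, and the analogous equation for $\tilde\sigma$ yields \eqref{rescal}. We must also check that this pair satisfies \eqref{restriction of recaling}, which is immediate since $\mu_{n-1,N}-\mu_{n,N-1}=O(1)$ and $\sigma_{n-1,N}/\sigma_{n,N-1}=1+O(N^{-1})$. The $O(N^{-1})$ discrepancy in $\theta$ multiplying the $O(N^{-1/3})$ linear term is harmless.

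\textbf{Main obstacle.} The hardest part is making the remainder estimates uniform with exponential decay $e^{-s/2}$ on the whole range up to $e_0\tilde\mu/\tilde\sigma\asymp N^{2/3}$. For $s$ bounded, direct Taylor bounds suffice. For large $s$, we will first show $u_{j,k}(s)\ge cs$ with $c>0$ depending on $\gamma$. This should follow from the concavity/monotonicity of $\zeta$ on $(0,z_1)$, and is where the cutoff $e_0$ is used to keep $z$ away from $0$, where $\zeta'$ blows up. Then the bounds \eqref{ExB4Airy} and \eqref{ExB4Airy'}, together with $Ai''=xAi$, make $|Ai^{(k)}|$ on $[cs,\infty)$ decay like $\exp(-\tfrac23 c^{3/2}s^{3/2})$. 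This absorbs the polynomial factors $(1+s^2)$ and the $N^{-1/3}$-size shifts into $C(s_0,\gamma)e^{-s/2}$. The constant degenerates as $\gamma\to0$ or $\gamma\to1$, through $z_1\to0$ or the turning points merging, which explains the $\gamma$-dependence.
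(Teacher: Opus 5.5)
Your proposal is correct and follows the paper's proof. You Taylor-expand $Ai$ about $s$, split $u_{j,k}(s)-s$ into the shift $(\mu_{j,k}-\tilde\mu_{n,N})/\sigma_{j,k}$, the rescaling term $(\tilde\sigma_{n,N}/\sigma_{j,k}-1)s$ and a quadratic remainder, and then choose $(\tilde\mu_{n,N},\tilde\sigma_{n,N})$ so that the $\theta$-weighted constant and linear coefficients vanish (the paper's $U_N=V_N=0$), which gives exactly \eqref{recenter} and \eqref{rescal}. The only difference is how the range of $s$ is handled: the paper uses the cancellation only on $[2s_0',N^{1/6}]$ and the crude bound $N^{2/3}(|Ai(u)|+|Ai(s)|)$ beyond $N^{1/6}$, while you carry the cancellation over the whole interval via $u_{j,k}(s)\ge cs$; this also correctly covers bounded $s$, where each shift is $\Theta(N^{-1/3})$, so the individual bound \eqref{2nd4D} does not carry over to $\tilde x_N$ as the paper asserts and cancellation is needed there as well.
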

\begin{proof}
    See Appendix \ref{P4re4D}.
\end{proof}

Lemma \ref{keyI}, \ref{keyI2} and \ref{re4D} altogether lead to the promised order and exponential bound on $\left[s_0,\frac{e_0\tilde{\mu}_{n,N}}{\tilde{\sigma}_{n,N}}\right]$, which is the most significant part in our proof. The remaining region, $\left[\frac{e_0\tilde{\mu}_{n,N}}{\tilde{\sigma}_{n,N}},\frac{\tilde{\mu}_{n,N}}{\tilde{\sigma}_{n,N}}\right]$, can be easily handled with the help of the following lemma.

\begin{lemma}\label{inlarge}
    For any $r\ge0$ and $t\in\left[\frac{e_0\tilde{\mu}_{n,N}}{\tilde{\sigma}_{n,N}},\infty\right)$, when $N$ is sufficiently large, we have
    \begin{equation*}
        |G_{\tau}(t)|,~|H_{\tau}(t)|\le N^{-r}C(\gamma,r)e^{-\frac{t}{2}}
    \end{equation*}
    where $C(\gamma,r)$ is a constant determined by $\frac{N}{N+a}\rightarrow\gamma\in(0,1)$ and $r$.
\end{lemma}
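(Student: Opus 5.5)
We bound $G_\tau$ and $H_\tau$ directly from their explicit form, with no Liouville--Green machinery. For $t\ge e_0\tilde\mu_{n,N}/\tilde\sigma_{n,N}$ the argument $\tilde x_N(t)=\tilde\mu_{n,N}-\tilde\sigma_{n,N}t$ lies in $(-\infty,(1-e_0)\tilde\mu_{n,N}]=(-\infty,\tilde\mu_{n,N}/e]$. Since $\xi_{a,N}$ and $\eta_{a,N}$ carry the factor $\chi_{(0,\infty)}$, both kernels vanish once $t>\tilde\mu_{n,N}/\tilde\sigma_{n,N}$. It therefore suffices to treat $x:=\tilde x_N(t)\in(0,\tilde\mu_{n,N}/e]$, which corresponds to the bounded range $t\in[e_0\tilde\mu_{n,N}/\tilde\sigma_{n,N},\tilde\mu_{n,N}/\tilde\sigma_{n,N}]$. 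On this range $e^{-t/2}\ge e^{-\tilde\mu_{n,N}/(2\tilde\sigma_{n,N})}$. Under $N/n\to\gamma\in(0,1)$ we have $\tilde\mu_{n,N}\asymp N$ and $\tilde\sigma_{n,N}\asymp N^{1/3}$, with constants depending on $\gamma$. Hence $e^{t/2}\le e^{C(\gamma)N^{2/3}}$, and the claim reduces to showing
\[
\sup_{0<x\le \tilde\mu_{n,N}/e}|\tilde\sigma_{n,N}\xi_{a,N}(x)|,\ |\tilde\sigma_{n,N}\eta_{a,N}(x)|\le e^{-c(\gamma)N}
\]
for some $c(\gamma)>0$. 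This exponential smallness in $N$ beats $e^{C N^{2/3}}N^{r}$ for every $r$, which produces $C(\gamma,r)$.

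To get the bound, we use the explicit formula \eqref{explicit xi and zeta} for $\xi_{a,N}$ and Lemma \ref{B4LagPoly}, which gives $|L_N^{a-1}(x)|\le\binom{N+a-1}{N}e^{x/2}$. This yields
\[
|\xi_{a,N}(x)|\le C N^{1/2}\sqrt{\tfrac{N!}{(N+a-1)!}}\binom{N+a-1}{N}\,x^{\frac a2-1}.
\]
The function $x\mapsto x^{a/2-1}$ is increasing because $a>2$, so on $(0,\tilde\mu_{n,N}/e]$ it is bounded by its value at $x=\tilde\mu_{n,N}/e$. Replacing $\binom{N+a-1}{N}\sqrt{N!/(N+a-1)!}$ by $\sqrt{(N+a-1)!/N!}/(a-1)!$, the right side becomes essentially
\[
\frac{1}{(a-1)!}\sqrt{\frac{(n-1)!}{N!}}\left(\frac{\mu}{e}\right)^{\frac{a}{2}-1}\,\mathrm{poly}(N),\qquad \mu\approx(\sqrt n-\sqrt N)^2 .
\]
By Stirling, the logarithm of this quantity divided by $N$ converges to an explicit function $\phi(\gamma)$. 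With $a=n-N$ and everything measured in units of $n$, we have
\[
\tfrac12\log\frac{n!}{N!}\approx \tfrac12\big(n\log n-N\log N-a\big),\qquad \log a!\approx a\log a-a,\qquad \tfrac a2\log\frac{\mu}{e}=\tfrac a2\log\mu-\tfrac a2 .
\]
Summing these pieces, the leading order is $n\,\psi(\gamma)$, where
\[
\psi=\tfrac12\log n+\tfrac\gamma2\log\tfrac1\gamma\cdot(\ldots)
\]
and so on. The key inequality is $\psi(\gamma)<0$. Evaluating at the edge value $\mu$ without the $1/e$ factor gives exactly the order-one (Airy-scale) quantity, because that is the turning point. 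The extra factor $e^{-a/2}$ coming from $\mu/e$ then produces exponential decay $e^{-(n-N)/2}=e^{-c(\gamma)N}$.

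I therefore organise the Stirling computation as a comparison with the value at the turning point $\mu_{n-1,N}$, where the whole expression is $O(N^{C})$. This is consistent with the normalization in Lemma \ref{c and r}, and with the fact that $F_{n,N}$ is of Airy order there. The same $\mathrm{poly}(N)$ times $e^{-a/2}$ argument applies to $\eta_{a,N}$, using $L^{a+1}_{N-1}$ and the power $x^{a/2}$ in place of $x^{a/2-1}$.

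The main obstacle is precisely this Stirling bookkeeping. The crude bound of Lemma \ref{B4LagPoly} loses the $e^{-x/2}$ decay entirely, so we must check that $\binom{n}{N}$-type growth times $\mu^{a/2}$ does not exceed $e^{a/2}\,\mathrm{poly}(N)$. If the crude comparison fails, the fallback is to make the bound sharper on $(0,\mu/e]$ by using the monotonicity of $w_N$ in the region below the turning point. There the equation \eqref{Lequa} has positive coefficient $\tfrac14-\tfrac{\kappa_N}{y}+\tfrac{\lambda_N^2-1/4}{y^2}$, so $w_N$ is non-oscillatory and increasing toward the turning point. This would bound the supremum by the value at $\mu/e$, which in turn can be estimated through \eqref{w2} together with the Airy decay \eqref{ExB4Airy} at $\kappa_N^{2/3}\zeta\asymp N^{2/3}$. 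That estimate gives $\exp(-cN)$ directly.
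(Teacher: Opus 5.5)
Your main line follows the paper's route: Lemma \ref{B4LagPoly} to cancel $e^{-x/2}$, monotonicity of $x^{a/2-1}$ to evaluate at $\tilde\mu_{n,N}/e$, Stirling, then absorbing $e^{t/2}\le e^{C(\gamma)N^{2/3}}$. However, the decisive step is missing, and the justification you give for it is false. You never show that the Stirling exponent is negative (``$\psi=\ldots$ and so on''). Instead you argue that at the turning point $\mu$ the bound is $O(N^C)$ because $F_{n,N}$ is of Airy order there, so the factor $e^{-a/2}$ from $\mu/e$ yields decay $e^{-(n-N)/2}$.

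This conflates the true function with the crude upper bound. The estimate $|L_N^{\alpha}(x)|\le\binom{N+\alpha}{N}e^{x/2}$ is sharp at $x=0$, not at the turning point. With $N=\gamma n$, $a=(1-\gamma)n$ and $\mu\approx n(1-\sqrt\gamma)^2$, Stirling gives
\[
\log\Big[\frac{1}{a!}\sqrt{\frac{n!}{N!}}\,\mu^{a/2}\Big]=n\Big(\frac{1-\gamma}{2}-h(\sqrt\gamma)\Big)+O(\log n),\qquad h(t):=t^2\log t+(1-t^2)\log(1+t).
\]
The coefficient $\frac{1-\gamma}{2}-h(\sqrt\gamma)$ is positive on $(0,1)$ (about $0.24$ at $\gamma=1/4$), so the bound is exponentially \emph{large} at $\mu$. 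After the factor $e^{-a/2}$ the exponent is $-n\,h(\sqrt\gamma)$, not $-(n-N)/2$. Its negativity is a genuine inequality, $h>0$ on $(0,1)$, and this is exactly what the paper proves: $h'(t)=1-2t\log(1+1/t)$ is strictly decreasing and $h(0^+)=h(1)=0$. Without that computation your main argument does not close.

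Your fallback, by contrast, is a sound and genuinely different route, provided you fix one point. The kernels are proportional to $w(x)/x$, not to $w(x)$. For example, $x^{a/2-1}e^{-x/2}L_N^{a-1}(x)=w(x)/x$, where $w=x^{a/2}e^{-x/2}L_N^{a-1}$ solves the Laguerre equation with parameter $a-1$. So monotonicity of $w$ alone does not control the supremum near $x=0$.

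Use instead the following facts:
\begin{itemize}
\item $w''=Qw$ with $Q>0$ below the smaller turning point, which equals $\mu_{n-1,N}+O(1)$ (respectively $\mu_{n,N-1}+O(1)$) and hence lies beyond $\tilde\mu_{n,N}/e$;
\item $w(0)=0$ and $w>0$ near $0$.
\end{itemize}
Then $w$ is positive and convex there, so $w(x)/x$ is nondecreasing. The supremum over $(0,\tilde\mu_{n,N}/e]$ is therefore attained at $\tilde\mu_{n,N}/e$, i.e.\ at $s=e_0\tilde\mu_{n,N}/\tilde\sigma_{n,N}$.

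That point lies in the range $s\le 2\mu/(3\sigma)$ covered by Lemma \ref{c and r} and Propositions \ref{B4z}, \ref{ex1}, \ref{ex2}, which give $|h_{j,k}|\le Ce^{-s}$ there (Airy decay in fact gives $e^{-cN}$). Even $e^{-s}$ suffices: since $e_0>\tfrac12$, we have $e^{-e_0\tilde\mu/\tilde\sigma}\le e^{-t/2}e^{-(e_0-1/2)\tilde\mu/\tilde\sigma}$ for all $t\le\tilde\mu/\tilde\sigma$.

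This route trades the explicit Stirling computation and the inequality $h>0$ for reuse of the L--G error bounds already needed in Lemma \ref{keyI}. The paper's route is more elementary, but it requires that explicit exponent.
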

\begin{proof}
    See Appendix \ref{P4inlarge}.
\end{proof}

Now the proof of Theorem \ref{main left soft edge2} is clear.
\medbreak\noindent {\itshape Proof of Theorem \ref{main left soft edge2}.}\enspace
    Split the interval into $\left[s_0,\frac{e_0\tilde{\mu}_{n,N}}{\tilde{\sigma}_{n,N}}\right)\cup\left[\frac{e_0\tilde{\mu}_{n,N}}{\tilde{\sigma}_{n,N}},+\infty\right):=J_{1,N}\cup J_{2,N}$ for large $N$. As indicated above, we only consider odd $N$.
As $N^\frac{1}{6}\rightarrow\infty$, $t\ge N^\frac{1}{6}\ge 1$ and $\frac{t^\frac{3}{2}}{12}\ge\frac{2}{3}\log N$ on $J_{2,N}$ for large $N$. Hence, when $N$ is large enough, on $J_{2,N}$, the bound \eqref{bound for Airy function} gives us
      \begin{equation}\label{B4lAi}
       N^\frac{2}{3} Ai(t)\le\exp\left\{\frac{2}{3}\log N-\frac{7}{12}t^\frac{3}{2}\right\}\le \exp\left\{-\frac{1}{2}t^\frac{3}{2}\right\}\le e^{-\frac{t}{2}}.
    \end{equation}
    In addition, on $J_{2,N}$, we already have the bound $N^{-\frac{2}{3}}C\left(\gamma,\frac{2}{3}\right)e^{-\frac{t}{2}}$ for both $|G_{\tau'}(t)|$ and $|H_{\tau'}(t)|$ by Lemma \ref{inlarge}. Combining \eqref{B4lAi}, we see \eqref{main2'} holds on $J_{2,N}$. On $J_{1,N}$, \eqref{main2'} follows from the bounds of $B$ and $D$ in Lemma \ref{keyI} and \ref{re4D}. The monotonicity of $C(s_0,\gamma)$ is easy to see in the proof of those lemmas.\qed

Theorem \ref{main left soft edge2} leads to a key inequality for the corresponding norm.
\begin{corollary}\label{normAi}
Under the same assumption as in Theorem \ref{main left soft edge2}, then on $[s_0,\infty)$, for $N$ large enough, we have
   \begin{equation}\label{main3}
\begin{array}{ll}
  \|G_{\tau}+H_{\tau}+(-1)^{N+1}\sqrt2 Ai\|_{L^2(s_0,\infty)}\le N^{-\frac{2}{3}}C(s_0,\gamma)e^{-\frac{s_0}{2}}\\
  \\
  \left\|G_{\tau}+(-1)^{N+1}\frac{\sqrt2}{2} Ai\right\|_{L^2(s_0,\infty)}\le N^{-\frac{1}{3}}C(s_0,\gamma)e^{-\frac{s_0}{2}}\\
  \\
  \left\|H_{\tau}+(-1)^{N+1}\frac{\sqrt2}{2} Ai\right\|_{L^2(s_0,\infty)}\le N^{-\frac{1}{3}}C(s_0,\gamma)e^{-\frac{s_0}{2}}
\end{array}
\end{equation}
where $C$ is continuous and nonincreasing with respect to $s_0$ for any fixed $\gamma$.
\end{corollary}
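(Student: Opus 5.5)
The plan is to integrate the pointwise bounds of Theorem \ref{main left soft edge2} directly. The norm in \eqref{main3} is the one-variable norm on $L^2(s_0,\infty)$ of a function of $t$, so no operator estimates are needed at this stage.

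First, apply Theorem \ref{main left soft edge2} with $A=[s_0,\infty)$. Write
\begin{equation*}
E_1:=G_\tau+H_\tau+(-1)^{N+1}\sqrt2 Ai,\qquad E_2:=G_\tau+(-1)^{N+1}\tfrac{\sqrt2}{2}Ai,\qquad E_3:=H_\tau+(-1)^{N+1}\tfrac{\sqrt2}{2}Ai .
\end{equation*}
For $N$ large, the theorem gives $|E_1(t)|\le N^{-2/3}C(s_0,\gamma)e^{-t/2}$ and $|E_j(t)|\le N^{-1/3}C(s_0,\gamma)e^{-t/2}$ for $j=2,3$, for every $t\ge s_0$.

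I want to be sure the threshold on $N$ does not depend on $t$. That is how the proof of Theorem \ref{main left soft edge2} runs: it splits $[s_0,\infty)$ into $J_{1,N}\cup J_{2,N}$ and uses Lemmas \ref{keyI}, \ref{keyI2}, \ref{re4D} and \ref{inlarge}. Each of these holds uniformly in $s$ on its region once $N$ exceeds a threshold depending only on $s_0$ and $\gamma$.

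Second, square and integrate:
\begin{equation*}
\|E_1\|_{L^2(s_0,\infty)}^2\le N^{-4/3}C(s_0,\gamma)^2\int_{s_0}^\infty e^{-t}\,dt=N^{-4/3}C(s_0,\gamma)^2e^{-s_0}.
\end{equation*}
Taking square roots gives the first line of \eqref{main3}. The same computation with $N^{-1/3}$ in place of $N^{-2/3}$ gives the second and third lines.

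Third, the regularity of the constant. Theorem \ref{main left soft edge2} only guarantees that $C(\cdot,\gamma)$ is non-increasing, so I would replace it with a continuous majorant. Define
\begin{equation*}
\widetilde C(s_0,\gamma):=\int_{s_0-1}^{s_0}C(u,\gamma)\,du .
\end{equation*}
Since $C(u,\gamma)\ge C(s_0,\gamma)$ for $u\le s_0$, we have $\widetilde C\ge C$. As a function of $s_0$, $\widetilde C$ is continuous, since it is an integral of a locally bounded monotone function over a moving window. It is also non-increasing, because its derivative $C(s_0,\gamma)-C(s_0-1,\gamma)$ is $\le 0$ almost everywhere. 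The bounds above therefore hold with $\widetilde C$ in place of $C$.

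The only point needing care is the uniformity in $t$ of the large-$N$ threshold in the first step. The integration itself is immediate.
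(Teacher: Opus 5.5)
Your strategy (square the pointwise bounds of Theorem \ref{main left soft edge2} and integrate) is the paper's, but you have bounded the wrong quantity. The norm $\|\cdot\|_{L^2(s_0,\infty)}$ in \eqref{main3} is not the one-variable norm of $t\mapsto E_j(t)$. It is the $L^2$ norm on $(s_0,\infty)^2$ of the kernel $(x,y)\mapsto E_j(x+y-s_0)$, that is, the Hilbert--Schmidt norm of the corresponding operator on $L^2(s_0,\infty)$ from Proposition \ref{LUE left soft fac}. This is the quantity fed into Lemma \ref{21in} in the proof of Theorem \ref{LUE left soft}, where the same norms are again estimated by double integrals over $(s,\infty)^2$. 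Substituting $t=x+y-s_0$ (for fixed $t\ge s_0$, $x$ ranges over $[s_0,t]$) gives
\[
\int_{s_0}^\infty\!\!\int_{s_0}^\infty |E_j(x+y-s_0)|^2\,dx\,dy=\int_{s_0}^\infty (t-s_0)\,|E_j(t)|^2\,dt .
\]
A bound on $\int_{s_0}^\infty|E_j(t)|^2\,dt$ alone does not control this weighted integral. So your conclusion, as stated, is not the estimate the rest of the argument needs.

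The repair takes one line, because you have pointwise bounds and not just an $L^2$ bound. With $t=x+y-s_0\ge s_0$,
\[
\int_{s_0}^\infty\!\!\int_{s_0}^\infty e^{-(x+y-s_0)}\,dx\,dy=\int_{s_0}^\infty (t-s_0)e^{-t}\,dt=e^{-s_0},
\]
which yields \eqref{main3} with exactly the constants you obtained. This is the paper's computation. Your remarks on the uniformity in $t$ of the threshold in $N$, and on the continuous non-increasing majorant $\widetilde C$, are correct. They are more careful than the paper, which asserts continuity of $C$ without comment.
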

\begin{proof}
    We only prove the first inequality for odd $N$ as the other ones follow by the remaining inequalities in Theorem \ref{main left soft edge2}. In fact, denote $t:=x+y-s_0$, when $N$ is large enough, the first inequality in \eqref{main2'} yields:
    \begin{equation*}
        \begin{split}
            \|G_{\tau}+H_{\tau}+\sqrt2 Ai\|^2_{L^2(s_0,\infty)}&=\int_{s_0}^\infty\int_{s_0}^\infty|G_{\tau'}(t)+H_{\tau'}(t)+\sqrt2Ai(t)|^2dxdy\\&\le N^{-\frac{4}{3}}C^2(s_0,\gamma)\int_{s_0}^\infty\int_{s_0}^\infty e^{-x-y+s_0}dxdy\\&=N^{-\frac{4}{3}}C^2(s_0,\gamma)e^{-s_0}.
        \end{split}
    \end{equation*}
    Taking the square root gives us the required result.
\end{proof}

Now we are ready to establish the proof of the key theorems in this paper.
\medbreak\noindent {\itshape Proof of Theorem \ref{LUE left soft}.}\enspace
For any $A\subset I=[s,\infty)$, recall
\begin{equation*}
    \begin{split}
W_1\left(\mathcal{N}_{L_{a,N}}^{LS},\mathcal{N}_{Ai}\right)&\le\|\mathbf{K}_{L_{a,N}}-\mathbf{K}_{Ai}\|_1=\|\mathbf{G}_{\tau}\mathbf{H}_{\tau}+\mathbf{H}_{\tau'}\mathbf{G}_{\tau}-\mathbf{Ai}^2\|_1\\
        &\le\frac{1}{2}\|G_{\tau}+H_{\tau}-\sqrt2 Ai\|_{L^2(s,\infty)}\|G_{\tau}+H_{\tau}-\sqrt2 Ai\|_{L^2(s,\infty)}+\frac{1}{2}\|G_{\tau'}-H_{\tau'}\|^2_{L^2(s,\infty)}.
    \end{split}
\end{equation*}
Combining this with \eqref{main3}, no matter what parity $N$ was, $\|G_{\tau}-H_{\tau}\|^2_{L^2(s,\infty)}$ is always of order $N^{-\frac{2}{3}}$ for large $N$. For the Airy function, continuity of $Ai$ on $\mathbb{R}^+$ and the bound in \eqref{ExB4Airy} indicate the existence of $C>0$ such that
\begin{equation*}
    |Ai(x)|\le\left\{\begin{array}{cc}
      C,~~0\le x\le1\\
      \\
      \exp\left(-\frac{2}{3}x^\frac{3}{2}\right)\le\exp\left(-\frac{x}{2}\right),~~x>1.
    \end{array}\right.
\end{equation*}
Hence, there exists an absolute constant $M>0$ such that $|Ai(x)|\le Me^{-\frac{x}{2}}$ for any $x\in\mathbb{R}^+$. Moreover, together with the bound of $|Ai|$ above, Theorem \ref{main left soft edge2} also illustrates that regardless the parities of $N$, for any $t\in A\subset I$,
\begin{equation*}
\begin{split}
      \left|G_{\tau}(t)+H_{\tau}(t)\right|^2&\le\left(N^{-\frac{2}{3}}C(s,\gamma)e^{-\frac{t}{2}}+\sqrt2|Ai(t)|\right)^2\\&\le C^2(s,\gamma)e^{-t}+M_1 C(s,\gamma)e^{-t}+M_2e^{-t}\\&=C(s,\gamma)e^{-t}
\end{split}
\end{equation*}
holds for some constants $M_1$ and $M_2$. Thus, for $N$ large enough, regardless of the parities,
\begin{equation}\label{boundpartofdif}
    \begin{split}
        &\|G_{\tau}+H_{\tau}-\sqrt2Ai\|_{L^2(s,\infty)},~~\|G_{\tau}+H_{\tau}+\sqrt2Ai\|_{L^2(s,\infty)}\\&\le\|G_{\tau}+H_{\tau}\|_{L^2(s,\infty)}+\sqrt2\|Ai\|_{L^2(s,\infty)}\\
        &\le\left(\int_{s}^\infty\int_{s}^\infty C(s,\gamma)e^{-x-y+s}dxdy\right)^\frac{1}{2}+\left(\int_{s}^\infty\int_{s}^\infty M^2e^{-x-y+s}dxdy\right)^\frac{1}{2}\\&\le C(s,\gamma)
    \end{split}
\end{equation}
where $C(s,\gamma)$ denotes different bounded and non-increasing functions with respect to $s$ for any fixed $\gamma$.

Now, assuming $N$ is large and odd, \eqref{main3} and \eqref{boundpartofdif} yield: $\|G_{\tau}+H_{\tau}+\sqrt2 Ai\|_{L^2(s,\infty)}$ is of order $N^{-\frac{2}{3}}$ with an exponential decay with respect to $s$ and $\|G_{\tau}+H_{\tau}-\sqrt2 Ai\|_{L^2(s,\infty)}$ is bounded. Similarly, if $N$ is large and even, then \eqref{boundpartofdif} leads to: $\|G_{\tau}+H_{\tau}-\sqrt2 Ai\|_{L^2(s,\infty)}$ is of order $N^{-\frac{2}{3}}$ with an exponential decay while $\|G_{\tau}+H_{\tau}+\sqrt2 Ai\|_{L^2(s,\infty)}$ is bounded. As a result, regardless of the parities of $N$, \eqref{main L left soft} holds if $N$ is large.\qed
\bigskip
\medbreak\noindent {\itshape Proof of Theorem \ref{LUE soft left2}.}\enspace
Recalling the fact that $0<\Theta_1\le\frac{N}{n}\le\Theta_2<1$, repeating the previous argument leads to
\begin{equation*}
W_1\left(\mathcal{N}_{L_{a,N}}^{LS},\mathcal{N}_{Ai}\right)\le\frac{g(s,\gamma,\Theta_1,\Theta_2)e^{-s}}{N^{\frac{2}{3}}}
\end{equation*}
for $N\in\mathbb{N}$ large. Since $\gamma\in[\Theta_1,\Theta_2]$, the appraoch from \cite{el2006rate}, combining with our estimates, enables us to eliminate the dependence on $\gamma$ by establishing a uniform bound on $[\Theta_1,\Theta_2]$.\qed

%% file: Appendix.tex
\section{Appendix}
\subsection{Properties of the Function in L--G Transformation}
Recall the function $f$ in the L--G transformation is defined as:
\begin{equation}\label{LG function}
    f(x)=\frac{(x-z_1)(x-z_2)}{4x^2}=\frac{1}{4x^2}\left(x^2-(z_1+z_2)x+z_1z_2\right).
\end{equation}
 We see $f$ is positive on $(-\infty,0)\cup(0,z_1)\cup(z_2,+\infty)$ and negative on $(z_1,z_2)$. Given the fact that our focus is on the properties around $z_1$ and $z_2$, we only make discussion in the neighborhood of those two points.
 \begin{proposition}\label{f M}
     Let $f$ be the function given in \eqref{LG function}, then it is decreasing around $z_1$ and increasing around $z_2$. For any $x\le z_1$ such that $f''(x)$ exists, we have $f''(x)>0$.
 \end{proposition}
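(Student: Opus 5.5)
This is an elementary computation with $f(x)=\frac{(x-z_1)(x-z_2)}{4x^2}$, where $0<z_1<2<z_2$ and $z_1z_2=\omega_N^2>0$, $z_1+z_2=4$. I will first rewrite $f$ in partial-fraction form in $u:=1/x$:
\begin{equation*}
f(x)=\frac14-\frac{z_1+z_2}{4x}+\frac{z_1z_2}{4x^2}=\frac14-u+\frac{\omega_N^2}{4}u^2 ,
\end{equation*}
using $z_1+z_2=4$. Both monotonicity and convexity then reduce to sign checks on explicit rational functions.

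For monotonicity, differentiating gives
\begin{equation*}
f'(x)=\frac{z_1+z_2}{4x^2}-\frac{z_1z_2}{2x^3}=\frac{(z_1+z_2)x-2z_1z_2}{4x^3}.
\end{equation*}
For $x>0$ the sign of $f'$ is that of $(z_1+z_2)x-2z_1z_2$, which vanishes only at
\begin{equation*}
x^*=\frac{2z_1z_2}{z_1+z_2},
\end{equation*}
the harmonic mean of $z_1$ and $z_2$. Since $z_1<z_2$, we have $z_1<x^*<z_2$. Hence $f'<0$ on $(0,x^*)$, which is a neighborhood of $z_1$, and $f'>0$ on $(x^*,\infty)$, which is a neighborhood of $z_2$. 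This gives the decreasing/increasing claims.

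For convexity, differentiating again gives
\begin{equation*}
f''(x)=-\frac{z_1+z_2}{2x^3}+\frac{3z_1z_2}{2x^4}=\frac{3z_1z_2-(z_1+z_2)x}{2x^4},
\end{equation*}
which exists for all $x\neq0$. If $x<0$, the numerator is a sum of positive terms, so $f''(x)>0$. If $0<x\le z_1$, then
\begin{equation*}
(z_1+z_2)x\le (z_1+z_2)z_1< 3z_1z_2 \iff z_1+z_2<3z_2 \iff z_1<2z_2,
\end{equation*}
which holds because $z_1<z_2$. So $f''>0$ for all $x\le z_1$ with $x\ne0$, which are exactly the points where $f''$ exists.

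The only potential obstacle is interpreting ``around $z_1$'' and ``for any $x\le z_1$'' correctly: the pole at $0$ must be excluded, and the neighborhoods should be stated explicitly as $(0,x^*)$ and $(x^*,\infty)$. I will also note that these intervals are uniform in $N$, because $\omega_N\in[\delta,4-\delta]$ keeps $z_1,z_2$ in compact subsets of $(0,2)$ and $(2,4)$. That uniformity is what the later appendix estimates need.
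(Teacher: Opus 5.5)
Your argument is correct and is essentially the paper's: it uses the same expressions for $f'$ and $f''$ and the same bound $3z_1z_2-(z_1+z_2)x\ge z_1(2z_2-z_1)>0$ for $x\le z_1$, and your harmonic-mean root $x^*=2z_1z_2/(z_1+z_2)\in(z_1,z_2)$ of $f'$ just makes explicit the neighborhoods the paper gets from the signs of $f'(z_1)$, $f'(z_2)$ and of $f'$ on $(0,z_1)$ and $(z_2,\infty)$. One small correction to your closing uniformity remark: it needs $\omega_N\in[\delta,2-\delta]$, which is the range used later in the paper and agrees with $\omega_N\to 2(1-\gamma)/(1+\gamma)$; the quoted $[\delta,4-\delta]$ would allow $\omega_N>2$, for which $z_1,z_2$ are not real.
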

 \begin{proof} Directly taking the derivative of $f$ gives us:
 \begin{equation*}
     f'(x)=\frac{z_1+z_2}{4x^2}-\frac{z_1z_2}{2x^3}=\frac{(z_1+z_2)x-2z_1z_2}{4x^3}
 \end{equation*}
  and hence,
  \begin{equation*}
      f'(z_1)=\frac{z_1^2-z_1z_2}{4z_1^3}<0,~~f'(z_2)=\frac{z_2^2-z_1z_2}{4z_2^2}>0.
  \end{equation*}
  Obviously, $f'(x)<0$ for any $x\in(0,z_1)$ and $f'(y)>0$ for any $y\in(z_2,\infty)$ from the expression of $f'(x)$.
  The further derivative of $f'$ takes the form:
\begin{equation*}
      f''(x)=\frac{3z_1z_2-(z_1+z_2)x}{2x^4}.
\end{equation*}
  For any $x\le z_1$, the numerator above satisfies $3z_1z_2-(z_1+z_2)x\ge z_1(2z_2-z_1)>0$. Since the denominator is always positive, we get the required result. Notice we don't have the similar result around $z_2$ as the sign of $z_2(2z_1-z_2)$ is undetermined.\end{proof}

  \begin{proposition}\label{B4z}
      For $N$ large enough, we can find an $s_1>0$ such that for any $s$ with $s_1\le s\le \frac{2\tilde{\mu}_{n,N}}{3\tilde{\sigma}_{N,n}}$ and $z$ around $z_1$,
      \begin{equation*}
          \frac{2\kappa_N}{3}\zeta^\frac{3}{2}(z),~~\frac{2\kappa_{n-1,N}}{3}\zeta^\frac{3}{2}\left(\frac{\tilde{x}_N(s)}{\kappa_{n-1,N}}\right)~~and~~\frac{2\kappa_{n,N-1}}{3}\zeta^\frac{3}{2}\left(\frac{\tilde{x}_N(s)}{\kappa_{n,N-1}}\right)\ge s
      \end{equation*}
      where $\tilde{x}_N(s)=\tilde{\mu}_{n,N}-\tilde{\sigma}_{n,N}s$ with respect to $\tilde{\mu}_{n,N}$ and $\tilde{\sigma}_{n,N}$ restricted as \eqref{restriction of recaling}. In particular, the inequalities hold for $s_1\le s\le \frac{e_0\tilde{\mu}_{n,N}}{\tilde{\sigma}_{N,n}}$ since $e_0<\frac{2}{3}$.
  \end{proposition}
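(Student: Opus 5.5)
The plan is to bound the Liouville--Green integral from below using convexity of $f$ on $(0,z_1]$. Then the normalisation built into $\sigma$ turns that bound into exactly $\frac{2}{3}s^{\frac{3}{2}}$, up to $1+o(1)$ corrections. Throughout, $z\le z_1$ (which holds once $s>0$ is not too small, since $\tilde\mu_{n,N}-\mu_{n-1,N}=O(1)$ while $\tilde\sigma_{n,N}\asymp N^{1/3}$). So by \eqref{LG1},
\begin{equation*}
\frac{2}{3}\zeta^{\frac{3}{2}}(z)=\int_z^{z_1}f^{\frac{1}{2}}(t)\,dt .
\end{equation*}

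\textbf{Step 1 (tangent-line bound).} By Proposition \ref{f M}, $f''>0$ on $(0,z_1]$, so $f$ lies above its tangent line at $z_1$. Since $f(z_1)=0$ and $f'(z_1)<0$, this gives $f(t)\ge |f'(z_1)|(z_1-t)$ for $t\in(0,z_1]$. Integrating yields
\begin{equation*}
\frac{2}{3}\zeta^{\frac{3}{2}}(z)\ge |f'(z_1)|^{\frac{1}{2}}\cdot\frac{2}{3}(z_1-z)^{\frac{3}{2}} .
\end{equation*}
The condition $s\le \frac{2\tilde\mu_{n,N}}{3\tilde\sigma_{n,N}}$ guarantees $\tilde x_N(s)\ge \tilde\mu_{n,N}/3>0$. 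Hence $z=\tilde x_N(s)/\kappa$ stays in $(0,z_1]$, the domain where the bound is valid. This is the only role of the upper endpoint.

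\textbf{Step 2 (identify the constant).} Differentiate $\zeta(\zeta')^2=f$ once and evaluate at $z_1$, where $\zeta(z_1)=0$. This gives $(\zeta'(z_1))^3=f'(z_1)$, hence $|f'(z_1)|=|\zeta'(z_1)|^3$. Recall from Section \ref{sec: L-G method} that $\sigma=\kappa^{\frac{1}{3}}/|\zeta'(z_1)|$. Therefore
\begin{equation*}
\kappa\,|f'(z_1)|^{\frac{1}{2}}\Bigl(\frac{\sigma}{\kappa}\Bigr)^{\frac{3}{2}}=1 .
\end{equation*}
For the standard scaling $x=\mu-\sigma s$, we have $z_1-z=\sigma s/\kappa$. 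So Step 1 gives $\frac{2\kappa}{3}\zeta^{\frac{3}{2}}(z)\ge \frac{2}{3}s^{\frac{3}{2}}$, and this is $\ge s$ as soon as $s\ge \frac{9}{4}$.

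\textbf{Step 3 (shifted scalings).} For the pairs $(n-1,N)$ and $(n,N-1)$ evaluated at $\tilde x_N(s)$, write
\begin{equation*}
z_1-z=\frac{\tilde\sigma_{n,N}s-(\tilde\mu_{n,N}-\mu_{j,k})}{\kappa_{j,k}} .
\end{equation*}
By \eqref{restriction of recaling}, $\tilde\mu_{n,N}-\mu_{j,k}=O(1)$ and $\tilde\sigma_{n,N}/\sigma_{j,k}=1+O(N^{-1})$. Since $\sigma_{j,k}\asymp N^{1/3}$ (with constants depending on $\gamma$), the same computation gives
\begin{equation*}
\frac{2\kappa_{j,k}}{3}\zeta^{\frac{3}{2}}\ge \frac{2}{3}\bigl(1+O(N^{-1})\bigr)\bigl(s-O(N^{-1/3})\bigr)^{\frac{3}{2}} .
\end{equation*}
Choosing $s_1=3$ and $N$ large makes the right side $\ge s$ for all $s\ge s_1$.

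The main obstacle is purely bookkeeping. One must check that the error terms $O(N^{-1/3})$ are uniform in $s$ over the whole admissible range, and that the $\gamma$-dependence of the implied constants, coming from $z_1,z_2$ and $\omega_N\in[\delta,4-\delta]$, does not spoil the choice of a single $s_1$. The final claim for $s\le e_0\tilde\mu_{n,N}/\tilde\sigma_{n,N}$ then follows from $e_0<\frac{2}{3}$.
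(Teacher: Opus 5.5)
Your proposal is correct and follows essentially the paper's argument. Both rest on the tangent-line bound $f(t)\ge |f'(z_1)|(z_1-t)$ coming from the convexity in Proposition \ref{f M}, and both handle the shifted pairs $(n-1,N)$ and $(n,N-1)$ by absorbing the $O(1)$ recentering and $1+O(N^{-1})$ rescaling from \eqref{restriction of recaling}. The differences are only technical. You integrate $|f'(z_1)|^{1/2}(z_1-t)^{1/2}$ exactly and use $|f'(z_1)|=\kappa/\sigma^{3}$ to get $\frac{2}{3}s^{3/2}$ with an explicit $s_1$. The paper instead lower-bounds $f^{1/2}$ by the constant $2/\sigma$ beyond a threshold $s_1'$ and integrates that constant to get $2(s-s_1')$.
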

  \begin{proof} We first claim there exists an $s'_1$ such that $f^{\frac{1}{2}}(z)\ge\frac{2}{\sigma_{n,N}}$ for any $s$ with $\frac{2\tilde{\mu}_{n,N}}{3\tilde{\sigma}_{n,N}}\ge s\ge s'_1$ around $z_1$ when $N$ is large. In fact, once the claim is true, then
  \begin{equation*}
  \begin{split}
      \frac{2\kappa_N}{3}\zeta^{\frac{3}{2}}(z)&=\kappa_N\int_{z}^{z_1}f^{\frac{1}{2}}(t)dt=\kappa_N\int_{z_1-\frac{\sigma_{n,N}}{\kappa_N}s}^{z_1-\frac{\sigma_{n,N}}{\kappa_N}s'_1}f^{\frac{1}{2}}(t)dt+\kappa_N\int_{z_1-\frac{\sigma_{n,N}}{\kappa_N}s'_1}^{z_1}f^{\frac{1}{2}}(t)dt\\
      &\ge\kappa_N\int_{z_1-\frac{\sigma_{n,N}}{\kappa_N}s}^{z_1-\frac{\sigma_{n,N}}{\kappa_N}s'_1}f^{\frac{1}{2}}(t)dt\ge2(s-s'_1)\ge s
      \end{split}
  \end{equation*}
for $s\ge2s'_1$. The range of the integral above causes no problem, since with the order as in \eqref{restriction of recaling}, $z_1-\frac{\sigma_{n,N}}{\kappa_N}s\ge z_1-\frac{2\mu_{n,N}}{3\kappa_N}=\frac{z_1}{3}>0$ given $N$ is large.

Now we prove the claim. Since $z_1-\frac{\sigma_{n,N}}{\kappa_N}s>0$ from previous discussion, performing the Taylor expansion of $f$ around $z_1$ leads to: $f(z_1-\frac{\sigma_{n,N}}{\kappa_N}s)=f(z_1)+f'(z_1)(-\frac{\sigma_{n,N}}{\kappa_N}s)+\frac{f''(\theta)}{2}(\frac{\sigma_{n,N}}{\kappa_N}s)^2$ where $\theta$ is between $z_1-\frac{\sigma_{n,N}}{\kappa_N}s$ and $z_1$ and hence, positive. Therefore, $f''(\theta)>0$ according to Proposition \ref{f M}.
Using the fact that $f(z_1)=0$,
\begin{equation*}
    f\left(z_1-\frac{\sigma_{n,N}}{\kappa_N}s\right)\ge-f'(z_1)\frac{\sigma_{n,N}s}{\kappa_N}=\frac{z_2-z_1}{4z_1^2}\frac{\sigma_{n,N}}{\kappa_N}s.
\end{equation*}
To guarantee $f(z)\ge\frac{4}{\sigma^2_{n,N}}$, we only need to show $s\ge\frac{16\kappa_N}{\sigma^3_{n,N}}\frac{z_1^2}{z_2-z_1}$. Recalling that both two fractions on the right side have finite positive limits by our assumption that $\gamma\in(0,1)$, there exists $M$ such that for any $N\ge M$, the right side of the inequality above is bounded above by some positive constant $s'_1$ depending on $\gamma$. That is, for any $s\ge s'_1$, when $N\ge M$, we always have $s\ge\frac{16\kappa_N}{\sigma_{n,N}^3}\frac{z_1^2}{z_2-z_1}$. Recalling $\frac{2\tilde{\mu}_{n,N}}{3\tilde{\sigma}_{n,N}}\rightarrow\infty$ as $N\rightarrow\infty$, the inequality $s_1'<\frac{2\tilde{\mu}_{n,N}}{3\tilde{\sigma}_{n,N}}$ always holds for $N$ large.

Now let's consider the scaling $\tilde{x}_N(s)$. Since
\begin{equation*}
    \frac{2\kappa_{n-1,N}}{3}\zeta\left(\frac{\tilde{x}_N(s)}{\kappa_{n-1,N}}\right)=\kappa_{n-1,N}\int_{\frac{\tilde{x}_N(s)}{\kappa_{n-1,N}}}^{z_1}f^\frac{1}{2}(t)dt=\kappa_{n-1,N}\int_{z_1-\tilde{\epsilon}_{n-1,N}(s)}^{z_1}f^\frac{1}{2}(t)dt
\end{equation*}
and $\tilde{\epsilon}_{n-1,N}(t)-\tilde{\epsilon}_{n-1,N}(s)=\frac{\tilde{\sigma}_{n,N}}{\kappa_{n-1,N}}(t-s)$, we can similarly begin with the proof of $f^\frac{1}{2}(z)\ge\frac{2}{\tilde{\sigma}_{n,N}}$ for $s\in\left[s_1'',\frac{\tilde{2\mu}_{n,N}}{3\tilde{\sigma}_{n,N}}\right]$ and some $s_1''$. Noticing that
\begin{equation*}
    z_1-\tilde{\epsilon}_{n-1,N}(s)=\frac{1}{\kappa_{n-1,N}}\left(\tilde{\mu}_{n,N}-\tilde{\sigma}_{n,N}s\right)\ge\frac{1}{\kappa_{n-1,N}}\left(\tilde{\mu}_{n,N}-\frac{2\mu_{n-1,N}\tilde{\sigma}_{n,N}}{3\sigma_{n-1,N}}\right)>0,
\end{equation*}
our reasoning regarding $f''$ still works and hence,
\begin{equation*}
    f\left(z_1-\tilde{\epsilon}_{n-1,N}(s)\right)\ge\frac{z_2-z_1}{4z_1^2}\tilde{\epsilon}_{n-1,N}(s).
\end{equation*}
Consequently, to guarantee $f(z)\ge\frac{4}{\tilde{\sigma}^2_{n,N}}$, we only need
\begin{equation*}
    s\ge\frac{16\kappa_{n-1,N}}{\tilde{\sigma}^3_{n,N}}\frac{z_1^2}{z_2-z_1}+\frac{\tilde{\mu}_{n,N}-\mu_{n-1,N}}{\sigma_{n-1,N}}.
\end{equation*}
Again, similar to the case of $s_1'$, the positive limit of the right side indicates the existence of $s_1''$. Now we have
\begin{equation*}
\begin{split}
       \frac{2\kappa_{n-1,N}}{3}\zeta\left(\frac{\tilde{x}_N(s)}{\kappa_{n-1,N}}\right)\ge\kappa_{n-1,N}\int_{z_1-\tilde{\epsilon}_{n-1,N}(s)}^{z_1-\tilde{\epsilon}_{n-1,N}(s_1'')}f^\frac{1}{2}(t)dt\ge2(s-s_1'')\ge s
\end{split}
\end{equation*}
for $s\ge 2s_1''$. Similar computation for $\frac{2\kappa_{n,N-1}}{3}\zeta^\frac{3}{2}\left(\frac{\tilde{x}_N(s)}{\kappa_{n,N-1}}\right)$ leads to 
\begin{equation*}
    \frac{2\kappa_{n,N-1}}{3}\zeta^\frac{3}{2}\left(\frac{\tilde{x}_N(s)}{\kappa_{n,N-1}}\right)\ge 2(s-s_1''')\ge s
\end{equation*}
for some $s_1'''$. Define $s_1:=\max\left\{2s_1',2s_1'',2s_1'''\right\}$, then for any $s\in\left[s_1,\frac{2\tilde{\mu}_{n,N}}{3\tilde{\sigma}_{n,N}}\right]$, the required inequalities hold.
\end{proof}

\subsection{Proof of Lemma \ref{c and r}}\label{P4CandR}
Recall the L--G transform around $z_1$ is given in \eqref{LG1}. We first compute the explicit antiderivative of $f^\frac{1}{2}(t)=\frac{\sqrt{t^2-4t+\omega_N^2}}{2t}$ around $z_1$. It suffices to focus on $t\in(0,z_1]$ since the discussion is similar for $t>z_1$. Noticing that $\omega_N^2<4$ and $t^2-4t+\omega_N^2>0$ in our settings,
\begin{equation}\label{antiD1}
\begin{split}
       \int \frac{\sqrt{t^2-4t+\omega_N^2}}{2t}dt&=\frac{1}{4}\int\frac{2t-4}{\sqrt{t^2-4t+\omega_N^2}}dt-\int\frac{dt}{\sqrt{t^2-4t+\omega_N^2}}+\frac{\omega_N^2}{2}\int\frac{dt}{t\sqrt{t^2-4t+\omega_N^2}}\\
       &:=\frac{1}{2}\sqrt{t^2-4t+\omega_N^2}-I_1+\frac{\omega_N^2}{2}I_2.
\end{split}
\end{equation}
Using the fact that $\int\frac{1}{\sqrt{x^2-1}}dx=\log|x+\sqrt{x^2-1}|+C$ for $|x|>1$ and $C\in\mathbb{R}$, we see
\begin{equation*}
    \begin{split}
        \int\frac{dx}{\sqrt{(x-a)^2-b^2}}=\log|\sqrt{(x-a)^2-b^2}+(x-a)|+C
    \end{split}
\end{equation*}
for any $a,~b\in\mathbb{R}$ with $|x-a|>|b|$, and hence,
\begin{equation}\label{anti4D2}
    \begin{split}
        I_1=\log|\sqrt{t^2-4t+\omega_N^2}+(t-2)|+C.
    \end{split}
\end{equation}
Substituting $u$ by $\frac{1}{t}$ in $I_2$, since $dt=-\frac{1}{u^2}du$ and $t>0$ in our settings,
\begin{equation}\label{anti4D3}
    \begin{split}
        I_2=\frac{1}{\omega_N}\log t-\frac{1}{\omega_N}\log\left|\sqrt{\omega_N^2t^2-4\omega_N^2t+\omega_N^4}+\omega_N^2-2t\right|+C.
    \end{split}
\end{equation}
Plugging \eqref{anti4D2} and \eqref{anti4D3} into \eqref{antiD1}, the explicit antiderivative is given as:
\begin{equation}\label{anti4zLG}
    \begin{split}
        \int f^\frac{1}{4}(t)dt&=\frac{1}{2}\sqrt{t^2-4t+\omega_N^2}-\log|\sqrt{t^2-4t+\omega_N^2}+(t-2)|\\&~~+\frac{\omega_N}{2}\log t-\frac{\omega_N}{2}\log\left|\sqrt{\omega_N^2t^2-4\omega_N^2t+\omega_N^4}+\omega_N^2-2t\right|+C.
    \end{split}
\end{equation}
Since $z_1^2-4z_1+\omega_N^2=0$, performing the integral of \eqref{anti4zLG} from $z\in(0,z_1)$ to $z_1$ yields:
\begin{equation}\label{ex4zeta1}
    \begin{split}
        \frac{2}{3}\zeta^\frac{3}{2}(z)&=\int_z^{z_1}f^\frac{1}{2}(t)dt=\frac{\omega_N}{2}\log z_1-\log|z_1-2|-\frac{\omega_N}{2}\log|\omega_N^2-2z_1|\\
        &~~-\frac{1}{2}\sqrt{z^2-4z+\omega_N^2}+\log\left|\sqrt{z^2-4z+\omega_N^2}+z-2\right|-\frac{\omega_N}{2}\log z\\
        &~~+\frac{\omega_N}{2}\log\left|\sqrt{\omega_N^2z^2-4\omega_N^2z+\omega_N^4}+\omega_N^2-2z\right|\\&:=C_0(z)-\frac{a}{2\kappa_N}\log z.
    \end{split}
\end{equation}
In particular, define:
\begin{equation}\label{c_0}
    \begin{split}
       c_0&:=C_0(0)=\frac{\omega_N}{2}\log z_1-\log|z_1-2|-\frac{\omega_N}{2}\log|\omega_N^2-2z_1|-\frac{\omega_N}{2}\\&~~+\log|\omega_N-2|+\frac{\omega_N}{2}\log\left(2\omega_N^2\right)\\
       &=-\frac{\omega_N}{2}\log\left|z_2-2\right|+\log\left|\frac{\omega_N-2}{2-z_1}\right|-\frac{\omega_N}{2}+\frac{\omega_N}{2}\log2+\omega_N\log a-\omega_N\log\kappa_N\\&=\frac{a}{\kappa_N}\log a-\frac{a}{2\kappa_N}-\frac{a}{2\kappa_N}\log\kappa_N-\frac{a}{4\kappa_N}\log\left(N+\frac{1}{2}\right)\left(n+\frac{1}{2}\right)+\frac{1}{2}\log\left|\frac{N+\frac{1}{2}}{n+\frac{1}{2}}\right|,
    \end{split}
\end{equation}
we see $\frac{2}{3}\zeta^\frac{3}{2}\sim-\frac{a}{2\kappa_N}\log z\rightarrow\infty$ when $z\rightarrow0$ since $c_0$ is a finite number. Recall $w_N(x)=(x)^\frac{a+1}{2}\exp\left(-\frac{x}{2}\right)L_N^a(x)\sim x^\frac{a+1}{2}\frac{(N+a)!}{a!N!}$ when $x\rightarrow 0$, according to \eqref{w2} and \eqref{c_0},
\begin{equation}\label{conCN}
    \begin{split}
        c^{\{1\}}_N&=\lim_{z\rightarrow0}\frac{w_N(\kappa z)}{w_2(\kappa_N,z)}=\frac{(N+a)!}{a!N!}\lim_{z\rightarrow0}\frac{f^\frac{1}{4}(z)(\kappa_Nz)^\frac{a+1}{2}}{\zeta^\frac{1}{4}(z)Ai\left(\kappa_N^\frac{2}{3}\zeta\right)}\\
        &=\frac{2\sqrt{\pi}(N+a)!\kappa_N^\frac{3a+4}{6}}{a!N!}\lim_{z\rightarrow0}\frac{(z^2-4z+\omega_N^2)^\frac{1}{4}z^\frac{a+1}{2}\exp\left(\kappa_NC_0(z)\right)}{\sqrt{2z}z^{\frac{a}{2}}}\\
        &=\frac{\sqrt{2\pi a}(N+a)!\kappa_N^\frac{1}{6}}{a!N!}a^ae^{-\frac{a}{2}}\left(N+\frac{1}{2}\right)^{\frac{N+\frac{1}{2}}{2}}\left(n+\frac{1}{2}\right)^{-\frac{n+\frac{1}{2}}{2}}
    \end{split}
\end{equation}
where the large $x$ expansion: $Ai(x)\sim\frac{\exp\left(-\frac{2}{3}x^\frac{3}{2}\right)}{2\sqrt\pi x^\frac{1}{4}}$ has been used in the second line. Since $N!=\sqrt{2\pi}N^{N+\frac{1}{2}}e^{-N}(1+O(N^{-1}))$ for $N\in\mathbb{N}$ large by Stirling's formula, \eqref{conCN} further yields:
\begin{equation}\label{ConRN}
\begin{split}
       \left(r^{\{1\}}_N\right)^2&=\left(\frac{c^{\{1\}}_N}{\kappa_N^\frac{1}{6}}\right)^2\frac{N!}{(N+a)!}=\frac{2\pi a^{2a+1}}{e^{a}}\frac{\left(N+\frac{1}{2}\right)^{N+\frac{1}{2}}}{\left(n+\frac{1}{2}\right)^{n+\frac{1}{2}}}\frac{\sqrt{2\pi}n^{n+\frac{1}{2}}}{\sqrt{2\pi}N^{N+\frac{1}{2}}}\frac{e^{N-n+2a}}{2\pi a^{2a+1}}\left(1+O(N^{-1})\right)\\
       &=\frac{\left(1+\frac{1}{2N}\right)^{N+\frac{1}{2}}}{\left(1+\frac{1}{2n}\right)^{n+\frac{1}{2}}}(1+O(N^{-1}))=1+O(N^{-1})
\end{split}
\end{equation}
since $\frac{a}{N+a}\rightarrow1-\gamma$ and $\frac{N}{N+a}\rightarrow\gamma\in(0,1)$ by our assumption. Taking the square root completes the proof as $r^{\{1\}}_N\ge0$ by our construction. \qed

\subsection{Expansion for L--G Transform}\label{P4seq}
\begin{lemma}\label{seq}
    If $z=z_1-\epsilon$ for some suitable $\epsilon$ (say less or equal to $\frac{\mu_{n,N}}{2\kappa_{N}}$), then there exists a sequence $s_N$ with finite limit such that
    \begin{equation}\label{a4zeta}
    \zeta^{\frac{3}{2}}(z)=\frac{\sqrt{\epsilon^3(z_{2}-z_{1})}}{2z_{1}}\left(1+\frac{3}{5}s_{N}\epsilon+O(\epsilon^2)\right)
\end{equation}
and hence,
\begin{equation}\label{a4zeta2}
   \kappa_{N}^{\frac{2}{3}}\zeta(z)=\frac{\epsilon\kappa_{N}}{\sigma_{n,N}}\left(1+\frac{2\epsilon s_{N}}{5}+O(\epsilon^2)\right),
\end{equation}
\begin{equation}\label{a4inverse}
    \left(\frac{\kappa_{N}}{\sigma^3_{n,N}}\right)^{\frac{1}{6}}\tilde{f}^{-\frac{1}{4}}(z)=1-\frac{2}{5} s_{N}\epsilon+O(\epsilon^2).
\end{equation}
Here $z_1$, $z_2$, $\kappa$ and $s$ are all double-indexed sequences with respect to $(j,k)\in\mathbb{N}^2$. We will abbreviate them as in this lemma if $(j,k)=(n,N)$. Moreover, the L--G transform $\zeta$ and induced $\tilde{f}$ is always assumed to be around $z_1$ (with parameter $(n,N)$ here).
\end{lemma}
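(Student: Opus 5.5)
The plan is to Taylor expand the integrand $f^{1/2}$ near $z_1$ and integrate term by term. Set $t=z_1-u$ with $0\le u\le\epsilon$. From \eqref{LG function},
\[
f(z_1-u)=\frac{u\,(z_2-z_1+u)}{4(z_1-u)^2}.
\]
Hence
\[
f^{1/2}(z_1-u)=\frac{\sqrt{u(z_2-z_1)}}{2z_1}\Bigl(1+\frac{u}{z_2-z_1}\Bigr)^{1/2}\Bigl(1-\frac{u}{z_1}\Bigr)^{-1}
=\frac{\sqrt{u(z_2-z_1)}}{2z_1}\bigl(1+s_N u+O(u^2)\bigr),
\]
with
\[
s_N:=\frac{1}{2(z_2-z_1)}+\frac{1}{z_1}.
\]
This is a genuine convergent expansion as long as $u/z_1$ stays bounded away from $1$. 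The restriction $\epsilon\le\mu_{n,N}/(2\kappa_N)=z_1/2$ guarantees $u\le z_1/2$, so the $O(u^2)$ remainder is uniform, with constant depending only on $z_1,z_2$. Since $\omega_N\to2(1-\gamma)/(1+\gamma)\in(0,2)$, both $z_1$ and $z_2-z_1=2\sqrt{4-\omega_N^2}$ have positive finite limits. Therefore $s_N$ has a finite limit and all implied constants are uniform in $N$ (they depend on $\gamma$).

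Next, integrate from $z=z_1-\epsilon$ to $z_1$:
\[
\frac23\zeta^{3/2}=\int_0^\epsilon f^{1/2}(z_1-u)\,du=\frac{\sqrt{z_2-z_1}}{2z_1}\Bigl(\frac23\epsilon^{3/2}+\frac25 s_N\epsilon^{5/2}+O(\epsilon^{7/2})\Bigr).
\]
Multiplying by $3/2$ gives \eqref{a4zeta}. Raising to the power $2/3$ via $(1+x)^{2/3}=1+\frac23x+O(x^2)$ gives
\[
\zeta=\Bigl(\frac{z_2-z_1}{4z_1^2}\Bigr)^{1/3}\epsilon\Bigl(1+\frac25 s_N\epsilon+O(\epsilon^2)\Bigr).
\]
By Section \ref{sec: L-G method}, $|\zeta'(z_1)|=\bigl(\frac{z_2-z_1}{4z_1^2}\bigr)^{1/3}=\kappa_N^{1/3}/\sigma_{n,N}$, with the sign convention fixed so that $\sigma_{n,N}>0$. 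The one point to verify is that this identity holds with exactly these parameters; it amounts to rewriting $z_1,z_2$ in terms of $\sqrt{n+\frac12}\pm\sqrt{N+\frac12}$. Multiplying by $\kappa_N^{2/3}$ then yields \eqref{a4zeta2}.

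For \eqref{a4inverse}, use $\tilde f=f/\zeta$, so that $\tilde f^{-1/4}=(\zeta/f)^{1/4}$. We have $f=\frac{\epsilon(z_2-z_1)}{4z_1^2}\bigl(1+2s_N\epsilon+O(\epsilon^2)\bigr)$, by squaring the expansion of $f^{1/2}$, and $\zeta=|\zeta'(z_1)|\,\epsilon\bigl(1+\frac25 s_N\epsilon+O(\epsilon^2)\bigr)$. Their ratio is
\[
\frac{\zeta}{f}=|\zeta'(z_1)|^{-2}\Bigl(1-\frac85 s_N\epsilon+O(\epsilon^2)\Bigr).
\]
Taking fourth roots gives $\tilde f^{-1/4}=|\zeta'(z_1)|^{-1/2}\bigl(1-\frac25 s_N\epsilon+O(\epsilon^2)\bigr)$. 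Since $|\zeta'(z_1)|^{-1/2}=(\sigma_{n,N}^3/\kappa_N)^{1/6}$, multiplying by $(\kappa_N/\sigma_{n,N}^3)^{1/6}$ yields \eqref{a4inverse}.

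The main obstacle is uniformity rather than the algebra. The $O(\epsilon^2)$ terms must be bounded uniformly in both $N$ and $\epsilon\in(0,z_1/2]$, so that the lemma can later be applied with $\epsilon$ proportional to $\sigma_{n,N}s/\kappa_N$ over the whole range of $s$. I would handle this by writing the remainders through Taylor's theorem with explicit Lagrange remainders for $(1+x)^{1/2}$, $(1-x)^{-1}$, $(1+x)^{2/3}$ and $(1+x)^{1/4}$ on compact ranges of $x$. Those compact ranges are guaranteed by $\epsilon\le z_1/2$ together with the convergence of $z_1$ and $z_2$.
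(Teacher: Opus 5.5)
Your proposal is correct and follows essentially the paper's route. You expand $f^{1/2}$ near $z_1$ and integrate term by term to get the expansion of $\zeta^{3/2}$ with $s_N=\frac{1}{z_1}+\frac{1}{2(z_2-z_1)}$. You then use the exact identity $\kappa_N/\sigma_{n,N}^3=(z_2-z_1)/(4z_1^2)$, which does hold since $\kappa_N z_1=\mu_{n,N}$ and $\kappa_N(z_2-z_1)=4\sqrt{(n+\frac12)(N+\frac12)}$, together with $\tilde f^{-1/4}=(\zeta/f)^{1/4}$.

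The only difference is cosmetic. The paper rescales $u=\epsilon y$ to $[0,1]$ and invokes dominated convergence, whereas your explicit Taylor remainders on compact ranges give the uniform $O(\epsilon^2)$ control more directly.
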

\begin{proof}
   According to the definition, 
\begin{equation*}
        \frac{2}{3}\zeta^{\frac{3}{2}}(z)=\frac{2}{3}\zeta^{\frac{3}{2}}\left(\frac{x_N(s)}{\kappa_N}\right)=\int_{z_1-\epsilon}^{z_1}\frac{\sqrt{(t-z_1)(t-z_2)}}{2t}dt.
\end{equation*}
Recall $\frac{\kappa_N}{\sigma_{n,N}^3}=\frac{z_2-z_1}{4z_1^2}$ by our construction. Define $A_N:=z_2-z_1=2\sqrt{4-\omega_N^2}$ and substitute $t$ by $y:=\frac{z_1-t}{\epsilon}$, we have:
\begin{equation*}
          \frac{2}{3}\zeta^{\frac{3}{2}}(z)=\frac{\sqrt{\epsilon^3A_N}}{2z_1}\int_0^1\sqrt y\frac{\sqrt{1+\frac{\epsilon}{A_N}y}}{1-\frac{\epsilon}{z_1}y}dy.
\end{equation*}
Since both $A_N$ and $z_1$ have finite nonzero limits as $\gamma\in(0,1)$, $\frac{\epsilon y}{A_N}$ and $\frac{\epsilon y}{z_1}$ are both small for any $y\in[0,1]$ provided $\epsilon$ in our region. The dominated convergence theorem allows us to perform the Taylor expansion inside the integral as:
\begin{equation}\label{int4epsilon}
\begin{split}
    \int_0^1\sqrt y\frac{\sqrt{1+\frac{\epsilon}{A_N}y}}{1-\frac{\epsilon}{z_1}y}dt&=\int_0^1\sqrt{y}\left(1+\epsilon y(\frac{1}{z_1}+\frac{1}{2A_N})+O(\epsilon^2)\right)dy\\
    &=\frac{2}{3}+\frac{2\epsilon}{5}\left(\frac{1}{z_1}+\frac{1}{2A_N}\right)+O(\epsilon^2).
\end{split}
\end{equation}
In fact, since
\begin{equation*}
    \lim_{N\rightarrow\infty}z_1=2-\frac{4\sqrt\gamma}{1+\gamma}=\frac{2(1-\sqrt\gamma)^2}{1+\gamma}
\end{equation*}
and
\begin{equation*}
    \lim_{N\rightarrow\infty}\frac{\mu_{n,N}}{\kappa_N}=\lim_{N\rightarrow\infty}\frac{2(\sqrt n-\sqrt N)^2}{N+n+1}=\frac{2(1-\sqrt\gamma)^2}{1+\gamma},
\end{equation*}
 we know $\epsilon\le\frac{\mu_{n,N}}{2\kappa_N}<\frac{2z_1}{3}$ for $N$ large enough by our restriction for $\epsilon$ and hence, $1-\frac{\epsilon}{z_1}y\ge \frac{1}{3}$ for any $y\in[0,1]$. Therefore, for $N$ large, the integrand in \eqref{int4epsilon} is bounded by $3\sqrt y\sqrt{1+C(\gamma)y}$, which is in $L^1[0,1]$. That means the dominated convergence theorem applies.

Define $s_N:=\frac{1}{z_1}+\frac{1}{2A_N}$. We know $s_N$ has finite limit and
\begin{equation*}
    \zeta^{\frac{3}{2}}(z)=\frac{\sqrt{\epsilon^3A_N}}{2z_1}\left(1+\frac{3}{5}s_N\epsilon+O(\epsilon^2)\right),
\end{equation*}
which is exactly \eqref{a4zeta}.

Recall $\frac{A_N}{4z_1^2}=\zeta'(z_1)=\frac{\kappa_N}{\sigma_{n,N}^3}$ by our construction, from \eqref{a4zeta},
\begin{equation*}
    \begin{split}
        \kappa_N^{\frac{2}{3}}\zeta&=\kappa_N^{\frac{2}{3}}\epsilon\left(\frac{A_N}{4z_1^2}\right)^\frac{1}{3}\left(1+\frac{2s_N\epsilon}{5}+O(\epsilon^2)\right)\\
        &=\frac{\epsilon\kappa_N}{\sigma_{n,N}}\left(1+\frac{2s_N\epsilon}{5}+O(\epsilon^2)\right)
    \end{split}.
\end{equation*}
Now write $f$ as:
\begin{equation*}
    f(z_1-\epsilon)=\frac{\epsilon(\epsilon+(z_2-z_1))}{4z_1^2(1-\frac{\epsilon}{z_1})^2}=\frac{A_N\epsilon(1+\frac{\epsilon}{A_N})}{4z_1^2(1-\frac{\epsilon}{z_1})^2}.
\end{equation*}
Again, performing the Taylor expansion for small $\epsilon$ leads to
\begin{equation*}
    f^{-\frac{3}{2}}(z)=\left(\frac{A_N\epsilon}{4z_1^2}\right)^{-\frac{3}{2}}\left(1-3s_N\epsilon+O(\epsilon^2)\right).
\end{equation*}
Recall $\tilde{f}=\frac{f}{\zeta}$, combing the previous results yields:
\begin{equation*}
    \tilde{f}^{-\frac{1}{4}}(z)=\left(\zeta^{\frac{3}{2}}(z)f^{-\frac{3}{2}}(z)\right)^{\frac{1}{6}}=\left(\frac{4z_1^2}{A_N}\right)^{\frac{1}{6}}\left(1-\frac{2}{5}s_N\epsilon+O(\epsilon^2)\right)
\end{equation*}
and hence,
\begin{equation*}
    \left(\frac{\kappa_N}{\sigma^3_{n,N}}\right)^{\frac{1}{6}}\tilde{f}^{-\frac{1}{4}}(z)=\left(\frac{\kappa_N}{\sigma^3_{n,N}}\times\frac{4z_1^2}{A_N}\right)^{\frac{1}{6}}\left(1-\frac{2}{5}s_N\epsilon+O(\epsilon^2)\right)=1-\frac{2}{5}s_N\epsilon+O(\epsilon^2)
\end{equation*}
for $N$ large enough.\end{proof}

\subsection{Proof of Lemma \ref{keyI}}\label{P4I1}
    Let $s_1$ be as in Proposition \ref{B4z} and define $s_0':=\max\{s_1,1\}$. Now we split $\left[s_0,\frac{e_0\tilde\mu_{n,N}}{\tilde\sigma_{n,N}}\right]$ into $[s_0,s_0']$, $\left[s_0',N^{\frac{1}{6}}\right]$ and $\left[N^\frac{1}{6},\frac{e_0\tilde\mu_{n,N}}{\tilde\sigma_{n,N}}\right]$ for $N$ large enough.

    According to \eqref{w2} and \eqref{B4epsilon}, 
    \begin{equation}\label{spB}
    \begin{split}
         B_{n,N}(x_N(s))&\le r_N\left|\left(\frac{\kappa_N}{\sigma_{n,N}^3}\right)^{\frac{1}{6}}\tilde{f}^{-\frac{1}{4}}(z)
\left(\frac{\mu_{n,N}}{x_N(s)}\right)-1\right|\left|Ai\left(\kappa_N^{\frac{2}{3}}\zeta\right)\right|\\&~~+r_N\left(\frac{\kappa_N}{\sigma_{n,N}^3}\right)^{\frac{1}{6}}\tilde{f}^{-\frac{1}{4}}(z)
\left(\frac{\mu_{n,N}}{x_N(s)}\right)|\epsilon_2(\kappa_N,z)|.
    \end{split}
    \end{equation}
For any $s\in\left[s_0,\frac{e_0\tilde\mu_{n,N}}{\tilde\sigma_{n,N}}\right]$, since $\frac{e_0\tilde\mu_{n,N}}{\tilde\sigma_{n,N}}\le\frac{2\mu_{n,N}}{3\sigma_{n,N}}$ for large $N$ given the condition in \eqref{restriction of recaling}, Proposition \ref{ex1} and Proposition \ref{ex2} yield:
\begin{equation*}
    r_N\left(\frac{\kappa_N}{\sigma^3_{n,N}}\right)^\frac{1}{6}\tilde{f}^{-\frac{1}{4}}(z)\left(\frac{\mu_{n,N}}{x_N(s)}\right)|\epsilon_2(\kappa_N,z)|\le\frac{C(s_0)}{\kappa_N}e^{-\frac{s_0}{2}}
\end{equation*}
and hence, we only have to pay attention to the first term on the right of \eqref{spB} since $\kappa_N=\Theta\left(N\right)$.

    Recall $x_N(s)=\mu_{n,N}-\sigma_{n,N}s$, $\mu_{n,N}=\Theta(N)$ and $\sigma_{n,N}=\Theta\left(N^{\frac{1}{3}}\right)$. For any $s\in\left[s_0',N^{\frac{1}{6}}\right]$, since $\epsilon_N(s):=\frac{\sigma_{n,N}s}{\kappa_N}=O\left(N^{-\frac{1}{2}}\right)$, applying \eqref{a4inverse} implies:
\begin{equation}\label{seq1}
        \begin{split}
            \left(\frac{\kappa_N}{\sigma_{n,N}^3}\right)^{\frac{1}{6}}&\tilde{f}^{-\frac{1}{4}}(z)
\left(\frac{\mu_{n,N}}{x_N(s)}\right)=\left(1-\frac{2}{5}s_N\epsilon_N(s)+O\left(\epsilon_N^2(s)\right)\right)\frac{1}{1-\frac{\sigma_{n,N}s}{\mu_{n,N}}}\\
&=\left(1-\frac{2}{5}s_N\epsilon_N(s)+O\left(\epsilon_N^2(s)\right)\right)\left(1+\frac{\sigma_{n,N}s}{\mu_{n,N}}+O\left(\sigma^2_{n,N}s^2\mu_{n,N}^{-2}\right)\right)\\
&=1+\left(\frac{\sigma_{n,N}}{\mu_{n,N}}-\frac{2s_N\sigma_{n,N}}{5\kappa_N}\right)s+O\left(N^{-1}\right):=1+s_N's+O\left(N^{-1}\right)
        \end{split}
    \end{equation}
on this interval. Since $s_N'=\Theta\left(N^{-\frac{2}{3}}\right)$, we conclude that there exists a constant $C(\gamma)$ such that
\begin{equation*}
   N^{\frac{2}{3}} r_N\left|\left(\frac{\kappa_N}{\sigma_{n,N}^3}\right)^{\frac{1}{6}}\tilde{f}^{-\frac{1}{4}}(z)
\left(\frac{\mu_{n,N}}{x_N(s)}\right)-1\right|\le C(\gamma)s
\end{equation*}
for any $s\in\left[s_0',N^{\frac{1}{6}}\right]$ when $N$ large. As indicated in Proposition \ref{B4z}, for $N$ large, we have $\kappa_N^{\frac{2}{3}}\zeta(z)\ge \left(\frac{3}{2}s\right)^{\frac{2}{3}}\ge1$ and hence, by the bound of $Ai$ in \eqref{ExB4Airy},
     \begin{equation}\label{Aizeta}
         \left|Ai\left(\kappa_N^{\frac{2}{3}}\zeta\right)\right|\le C(\gamma)e^{-s}
     \end{equation}
     for some constant $C(\gamma)$ and any $s\in\left[s_0',N^{\frac{1}{6}}\right]$ by \eqref{ExB4Airy}. Combining previous results, when $N$ is large, the first term of $B_{n,N}$ is bounded as:
     \begin{equation}\label{1st4B}
         N^{\frac{2}{3}} r_N\left|\left(\frac{\kappa_N}{\sigma_{n,N}^3}\right)^{\frac{1}{6}}\tilde{f}^{-\frac{1}{4}}(z)
\left(\frac{\mu_{n,N}}{x_N(s)}\right)-1\right|\left|Ai\left(\kappa_N^{\frac{2}{3}}\zeta\right)\right|\le C(\gamma)(se^{-s})\le C(\gamma)e^{-\frac{s}{2}}
     \end{equation}
     for some constant $C(\gamma)$ and any $s\in\left[s_0',N^{\frac{1}{6}}\right]$. The inequality \eqref{Aizeta} simply indicates the required bound for $B_{n,N}$ in $\left[N^\frac{1}{6},\frac{e_0\tilde\mu_{n,N}}{\tilde\sigma_{n,N}}\right]$. As $s\ge N^\frac{1}{6}$ in this region, it follows that $N^\frac{2}{3}\le e^\frac{s}{2}$ and hence, $N^\frac{2}{3}e^{-s}\le e^{-\frac{s}{2}}$ when $N$ is large. Since $|Ai(x)|\le\left|\frac{\mathbf{M}(x)}{\mathbf{E}(x)}\right|$ in Proposition \ref{aux4AiandBi}, combining Proposition \ref{ex1} and \eqref{Aizeta} leads to:
     \begin{equation}\label{2nd4B}
         \begin{split}
             &N^\frac{2}{3}r_N\left|\left(\frac{\kappa_N}{\sigma_{n,N}^3}\right)^{\frac{1}{6}}\tilde{f}^{-\frac{1}{4}}(z)
\left(\frac{\mu_{n,N}}{x_N(s)}\right)-1\right|\left|Ai\left(\kappa_N^{\frac{2}{3}}\zeta\right)\right|\\
&\le N^\frac{2}{3}r_N\left|\left(\frac{\kappa_N}{\sigma_{n,N}^3}\right)^{\frac{1}{6}}\tilde{f}^{-\frac{1}{4}}(z)
\left(\frac{\mu_{n,N}}{x_N(s)}\right)Ai\left(\kappa_N^{\frac{2}{3}}\zeta\right)\right|+N^\frac{2}{3}r_N\left|Ai\left(\kappa_N^{\frac{2}{3}}\zeta\right)\right|\\
&\le N^\frac{2}{3}r_N\left|\left(\frac{\kappa_N}{\sigma_{n,N}^3}\right)^{\frac{1}{6}}\tilde{f}^{-\frac{1}{4}}(z)
\left(\frac{\mu_{n,N}}{x_N(s)}\right)\frac{\mathbf{M}\left(\kappa_N^{\frac{2}{3}}\zeta\right)}{\mathbf{E}\left(\kappa_N^{\frac{2}{3}}\zeta\right)}\right|+N^\frac{2}{3}r_N\left|Ai\left(\kappa_N^{\frac{2}{3}}\zeta\right)\right|\\
&\le N^\frac{2}{3}C(\gamma)e^{-s}\le C(\gamma)e^{-\frac{s}{2}}.
         \end{split}
     \end{equation}

     Now consider $s\in[s_0,s_0']$. As both $s_0$ and $s_0'$ are fixed finite numbers, $\epsilon_N(s)$ is very small when $N$ is large. Therefore, Lemma \ref{seq} still applies and similar argument leads to
     \begin{equation}\label{Tf4 fixed}
          N^{\frac{2}{3}} r_N\left|\left(\frac{\kappa_N}{\sigma_{n,N}^3}\right)^{\frac{1}{6}}\tilde{f}^{-\frac{1}{4}}(z)
\left(\frac{\mu_{n,N}}{x_N(s)}\right)-1\right|\le C(\gamma,s_0)\max\{|s|,1\}
     \end{equation}
     for any $s\in[s_0,s_0']$ and $N$ large. Recall $x_N(s)=\mu_{n,N}-\sigma_{n,N}s$, we know $x_N(s)>0$ for any $s\in[s_0,s_0']$ when $N$ is large. Hence, by the integral expression of $\zeta$, it is easy to see $\zeta(z)=\zeta\left(\frac{x_N(s)}{\kappa_N}\right)$ is an increasing function with respect to $s$ on the same interval. Combining two limits $\kappa_N^\frac{2}{3}\zeta\left(\frac{x_N(s_0)}{\kappa_N}\right)\rightarrow s_0$ and $\kappa_N^\frac{2}{3}\zeta\left(\frac{x_N(s'_0)}{\kappa_N}\right)\rightarrow s'_0$,
     \begin{equation*}
         -2|s_0|\le\kappa_N^{\frac{2}{3}}\zeta\left(\frac{x_N(s)}{\kappa_N}\right)\le2s_0'
     \end{equation*}
     for any $s\in[s_0,s_0']$ and large $N$. As a result, when $N$ is large, the analysis in \eqref{Tf4 fixed} implies:
     \begin{equation}\label{3rd4B}
         \begin{split}
             &N^{\frac{2}{3}} r_N\left|\left(\frac{\kappa_N}{\sigma_{n,N}^3}\right)^{\frac{1}{6}}\tilde{f}^{-\frac{1}{4}}(z)
\left(\frac{\mu_{n,N}}{x_N(s)}\right)-1\right|\left|Ai\left(\kappa_N^\frac{2}{3}\zeta\right)\right|\\
&\le C(\gamma,s_0)\max\{|s|,1\}\left|Ai\left(\kappa_N^\frac{2}{3}\zeta\right)\exp\left(\frac{s}{2}\right)\right|e^{-\frac{s}{2}}\\
&\le e^{-\frac{s}{2}}C(\gamma,s_0)\sup_{s\in[-2|s_0|,2|s_0'|]}|\max\{|s|,1\}Ai(s)e^{\frac{s}{2}}|\\&=C(\gamma,s_0)e^{-\frac{s}{2}}
         \end{split}
     \end{equation}
     for any $s\in[s_0,s_0']$. Combining \eqref{1st4B}, \eqref{2nd4B} and \eqref{3rd4B} altogether finishes the proof for $B_{n,N}$. 

     Next let's consider $B_{n-1,N}$. From the previous discussion, for $x_{n-1,N}(s)=\mu_{n-1,N}-\sigma_{n-1,N}s$, we already have $N^\frac{2}{3}B_{n-1,N}(x_{n-1,N}(x))\le C(s_0,\gamma)e^{-\frac{s}{2}}$. Now our aim is to show the inequality still holds if we replace $x_{n-1,N}(s)$ by $\tilde{x}_N(s)$. 
     
     Introduce notations similar to those in \cite{el2006rate}:
\begin{equation}\label{tildedeviation}
    \begin{array}{cc}
      \delta_{n-1,N}:=\frac{\mu_{n-1,N}-\tilde{\mu}_{n,N}}{\kappa_{n-1,N}};~~~
      \delta_{n,N-1}:=\frac{\mu_{n,N-1}-\tilde{\mu}_{n,N}}{\kappa_{n,N-1}};\\
      \\
      \tilde{\epsilon}_{n,N}(s):=\frac{\mu_{n,N}-\tilde{\mu}_{n,N}}{\kappa_{n,N}}+\frac{\tilde{\sigma}_{n,N}}{\kappa_{n,N}}s.
    \end{array}
\end{equation}
We first derive sequences as in Lemma \ref{P4seq} and then argue through the same philosophy for $B_{n,N}$. For any $s\in\left[s_0,\frac{\mu_{n-1,N}}{3\sigma_{n-1,N}}\right]$ and $N$ large, notice
\begin{equation*}
    \frac{\tilde{x}_N(s)}{\kappa_{n-1,N}}=\frac{\mu_{n-1,N}}{\kappa_{n-1,N}}-\left(\frac{\mu_{n-1,N}-\tilde{\mu}_{n,N}}{\kappa_{n-1,N}}+\frac{\tilde{\sigma}_{n,N}}{\kappa_{n-1,N}}s\right)=z_1(n-1,N)-\tilde{\epsilon}_{n-1,N}(s)
\end{equation*}
and
\begin{equation*}
    \tilde{\epsilon}_{n-1,N}(s)\le\frac{\mu_{n-1,N}-\tilde{\mu}_{n,N}}{\kappa_{n-1,N}}+\frac{\tilde{\sigma}_{n,N}}{3\sigma_{n-1,N}}\mu_{n-1,N}\le\frac{\mu_{n-1,N}}{2\sigma_{n-1,N}}.
\end{equation*}
Given $\tilde{\mu}_{n,N}$ and $\tilde{\sigma}_{n,N}$ restricted as in \eqref{restriction of recaling}, Lemma \ref{P4seq} applies. Thus, it follows that
\begin{equation}\label{seq'1}
    \begin{split}
        \kappa_{n-1,N}^\frac{2}{3}\zeta\left(\frac{\tilde{x}_N(s)}{\kappa_{n-1,N}}\right)&=\kappa_{n-1,N}^\frac{2}{3}\zeta\left(z_1-\tilde{\epsilon}_{n-1,N}\right)\\&=\frac{\tilde{\epsilon}_{n-1,N}\kappa_{n-1,N}}{\sigma_{n-1,N}}\left(1+\frac{2s_{n-1,N}}{5}\tilde{\epsilon}_{n-1,N}+O\left(\tilde{\epsilon}^2_{n-1,N}\right)\right)
    \end{split}
\end{equation}
and
\begin{equation}\label{seq'2}
        \left(\frac{\kappa_{n-1,N}}{\sigma^3_{n-1,N}}\right)^\frac{1}{6}\tilde{f}^{-\frac{1}{4}}\left(\frac{\tilde{x}_N(s)}{\kappa_{n-1,N}}\right)=1-\frac{2}{5}s_{n-1,N}\tilde{\epsilon}_{n-1,N}+O\left(\tilde{\epsilon}^2_{n-1,N}\right).
\end{equation}

     For $s\in[s_0,s_0']\cup\left[N^\frac{1}{6},\frac{e_0\tilde\mu_{n,N}}{\tilde\sigma_{n,N}}\right]$, our previous discussion trivially applies since we still have $\kappa_{n-1,N}^\frac{2}{3}\zeta\left(\frac{\tilde{x}_N(s_0)}{\kappa_N}\right)\rightarrow s_0$ and $\kappa_{n-1,N}^\frac{2}{3}\zeta\left(\frac{\tilde{x}_N(s'_0)}{\kappa_N}\right)\rightarrow s'_0$ on   
     $[s_0,s_0']$ and $\kappa_{n-1,N}^\frac{2}{3}\zeta\left(\frac{\tilde{x}_N(s)}{\kappa_{n-1,N}}\right)\ge s\ge1$ on $\left[N^\frac{1}{6},\frac{e_0\tilde\mu_{n,N}}{\tilde\sigma_{n,N}}\right]$ according to \eqref{seq'1}.
     Therefore, we only need to focus on the interval $\left[s_0',N^{\frac{1}{6}}\right]$. Recall $\tilde{\epsilon}_{n-1,N}(s):=\epsilon=O\left(N^{-\frac{1}{2}}\right)$, similar to \eqref{seq1}, \eqref{seq'2} yields
\begin{equation}\label{seq2}
\begin{split}
\left(\frac{\kappa_{n-1,N}}{\sigma_{n-1,N}^3}\right)^{\frac{1}{6}}\tilde{f}^{-\frac{1}{4}}(z)
\left(\frac{\mu_{n-1,N}}{\tilde{x}_N(s)}\right)&=\left(1-\frac{2}{5}s_{n-1,N}\epsilon+O\left(\epsilon^2\right)\right)\frac{\mu_{n-1,N}}{\mu_{n-1,N}-\kappa_{n-1,N}\epsilon}\\
&=1+\left(\frac{\tilde\sigma_{n,N}}{\mu_{n-1,N}}-\frac{2s_{n-1,N}\tilde\sigma_{n,N}}{5\kappa_{n-1,N}}\right)s+O\left(N^{-1}\right)\\&:=1+s_{n-1,N}'s+O\left(N^{-1}\right).
\end{split}
\end{equation}
Again, as $s_{n-1,N}'=\Theta\left(N^{-\frac{2}{3}}\right)$, we still have:
\begin{equation*}
   N^{\frac{2}{3}} r_{n-1,N}\left|\left(\frac{\kappa_{n-1,N}}{\sigma_{n-1,N}^3}\right)^{\frac{1}{6}}\tilde{f}^{-\frac{1}{4}}(z)
\left(\frac{\mu_{n-1,N}}{\tilde{x}_N(s)}\right)-1\right|\le C(\gamma)s
\end{equation*}
for any $s\in\left[s_0',N^{\frac{1}{6}}\right]$ and some constant $C(s_0,\gamma)$ when $N$ is large. The bound of $Ai\left(\kappa_{n-1,N}^\frac{2}{3}\zeta\right)$ similarly follows from \eqref{ExB4Airy} since $\kappa_{n-1,N}^\frac{2}{3}\zeta\ge s\ge 1$ according to Proposition \ref{B4z}.

So far, the proof regarding $B_{n-1,N}$ has been finished. The inequality for $B_{n,N-1}$ can be proved similarly.
\qed   

\subsection{Proof of Lemma \ref{keyI2}}\label{P4I2}
     The bound for $D_{n,N}$ is essentially the difference $\left|Ai\left(\kappa_N^\frac{2}{3}\zeta\right)-Ai(s)\right|$. Since $\kappa_N^\frac{2}{3}\zeta(x_N(s))\rightarrow s$ and $Ai(x)\in C^2(\mathbb{R})$ with $|Ai'(x)|$ decreasing on $\mathbb{R}^+$, we simply apply the mean value theorem to establish the bound for $D$.

     Now split $\left[s_0,\frac{e_0\tilde\mu_{n,N}}{\tilde\sigma_{n,N}}\right]$ into $[s_0,2s_0']$, $\left[2s_0',N^{\frac{1}{6}}\right]$ and $\left[N^\frac{1}{6},\frac{e_0\tilde\mu_{n,N}}{\tilde\sigma_{n,N}}\right]$ where $s_0'$ is defined as in Appendix \ref{P4I1}. For any $s\in[2s_0',N^{\frac{1}{6}}]$, it is easy to see that $\min\left\{\kappa_N^\frac{2}{3}\zeta,s\right\}\ge\frac{s}{2}\ge s_0'\ge1$ for $N$ large. Moreover, the expansion
     \begin{equation}\label{a4zeta3}
     \begin{split}
         \kappa_N^\frac{2}{3}\zeta(x_N(s))&=\frac{\epsilon_N(s)\kappa_N}{\sigma_{n,N}}\left(1+\frac{2}{5}\epsilon_N(s)s_N+O(\epsilon_N^2(s))\right)\\&=s+\frac{2\sigma_{n,N}s_N}{5\kappa_N}s^2+O\left(\frac{s^3\sigma_{n,N}^2}{\kappa_N^2}\right) 
     \end{split}
     \end{equation}
     follows from \eqref{a4zeta2} and $\epsilon_N(s)=O\left(N^{-\frac{1}{2}}\right)$. Using \eqref{a4zeta3} and the bound of $|Ai'|$ in \eqref{ExB4Airy'}, we conclude that for any $s\in\left[2s_0',N^{\frac{1}{6}}\right]$, there exists a $\delta$ between $\kappa_N^\frac{2}{3}\zeta$ and $s$ such that
     \begin{equation}\label{1st4D}
         \begin{split}
             \left|Ai\left(\kappa_N^\frac{2}{3}\zeta\right)-Ai(s)\right|&=|Ai'(\delta)|\left|\kappa_N^\frac{2}{3}\zeta-s\right|\le C(\gamma)\left|Ai'\left(\frac{s}{2}\right)\right|\frac{\sigma_{n,N}}{\kappa_N}s^2\\
             &\le\frac{C(\gamma)\sigma_{n,N}}{\mu_{n,N}}s^{\frac{9}{4}}\exp\left(-\frac{s^{\frac{3}{2}}}{3\sqrt2}\right)\\&\le C(\gamma)\frac{\sigma_{n,N}}{\mu_{n,N}}e^{-\frac{s}{2}}
         \end{split}
     \end{equation}
     for $N$ large.

     For $s\in[s_0,2s_0']\cup\left[N^\frac{1}{6},\frac{e_0\tilde\mu_{n,N}}{\tilde\sigma_{n,N}}\right]$, the reasoning is similar to it is for $B_{n,N}$. For $s\in[s_0,2s_0']$, when $N$ is large enough, $\kappa_N^\frac{2}{3}\zeta$ is bounded by $-2|s_0|$ and $4s_0'$. Adapting the first line of \eqref{1st4D} yields:
     \begin{equation}\label{2nd4D}
         \begin{split}
             N^{\frac{2}{3}}\left|Ai\left(\kappa_N^\frac{2}{3}\zeta\right)-Ai(s)\right|&\le N^\frac{2}{3}\left|\kappa_N^\frac{2}{3}\zeta-s\right|\sup_{-2|s_0|\le\delta\le4s_0'}|Ai'(\delta)|\le C(\gamma,s_0)s^2\\&\le e^{-\frac{s}{2}} C(\gamma,s_0)\sup_{s_0\le s\le2s_0'}s^2e^{\frac{s}{2}}\\&=C(\gamma,s_0)e^{-\frac{s}{2}}.
         \end{split}
     \end{equation}
     On $\left[N^\frac{1}{6},\frac{e_0\tilde\mu_{n,N}}{\tilde\sigma_{n,N}}\right]$, when $N$ is large enough, it follows that
     \begin{equation}\label{3rd4D}
         N^\frac{2}{3}\left|Ai\left(\kappa_N^\frac{2}{3}\zeta\right)-Ai(s)\right|\le N^\frac{2}{3}\left|Ai\left(\kappa_N^\frac{2}{3}\zeta\right)\right|+N^\frac{2}{3}\left|Ai(s)\right|\le C(\gamma)e^{-\frac{s}{2}}
     \end{equation}
     from \eqref{ExB4Airy} and \eqref{Aizeta}. Estimates \eqref{3rd4B}, \eqref{1st4D}) and \eqref{2nd4D} directly provide us with the bound of $D_{n,N}$ with the required order.\qed
     
\subsection{Proof of Lemma \ref{re4D}}\label{P4re4D}
For $s\in[s_0,2s_0']$, the expansion \eqref{seq'1} yields $\kappa_{n-1,N}^\frac{2}{3}\zeta\left(\frac{\tilde{x}_N(s_0)}{\kappa_{n-1N}}\right)\rightarrow s_0$ and $\kappa_{n-1,N}^\frac{2}{3}\zeta\left(\frac{\tilde{x}_N(s'_0)}{\kappa_{n-1N}}\right)\rightarrow s'_0$. Thus, the expansion \eqref{2nd4D} still holds if we substitute $x_{n-1,N}(s)$ by $\tilde{x}_N(s)$ in $D_{n-1,N}$. Similarly, for $s\in\left[N^\frac{1}{6},\frac{e_0\tilde\mu_{n,N}}{\tilde\sigma_{n,N}}\right]$, since $\kappa_{n-1,N}^\frac{2}{3}\left(\frac{\tilde{x}_N(s)}{\kappa_{n-1,N}}\right)\ge s$ by Proposition \ref{B4z}, the inequality \eqref{3rd4D} also holds for $D_{n-1,N}(\tilde{x}_N(s))$. Similar discussion also makes sense for $D_{n,N-1}(\tilde{x}_N(s))$. Therefore, we can restrict our focus to $\left|Ai\left(\kappa_N^\frac{2}{3}\zeta\right)-Ai(s)\right|$ when $s\in\left[2s_0',N^\frac{1}{6}\right]$. Further consider the differences:
\begin{equation*}
    \begin{array}{ll}
        d_{n-1,N}(s):=\kappa_{n-1,N}^\frac{2}{3}\zeta(\tilde{x}_N(s))-s,\\
        \\
        d_{n,N-1}(s):=\kappa_{n,N-1}^\frac{2}{3}\zeta(\tilde{x}_N(s))-s.
    \end{array}
\end{equation*}
Performing the Taylor expansion around $s$ for $Ai$ with the integral remainder yields:
\begin{equation}\label{Taylor4Ai}
    \begin{split}
         Ai\left(\kappa_{n-1,N}^\frac{2}{3}\zeta(\tilde{x}_N(s))\right)&=Ai(s)+d_{n-1,N}(s)Ai'(s)+\int_0^{d_{n-1,N}(s)}(d_{n-1,N}(s)-t)(s+t)Ai(s+t)dt\\
        &:=Ai(s)+d_{n-1,N}(s)Ai'(s)+R_{n-1,N}(s)
    \end{split}
\end{equation}
where the Airy equation: $Ai''(y)=yAi(y)$ has been used inside the integral. Consequently,
\begin{equation}\label{B4D}
\begin{split}
      D(\tilde{x}_N(s))&\le|\theta_{n-1,N}d_{n-1,N}(s)+\theta_{n,N-1}d_{n,N-1}(s)||Ai'(s)|\\
      &~~+\theta_{n-1,N}|R_{n-1,N}(s)|+\theta_{n,N-1}|R_{n,N-1}(s)|.
\end{split}
\end{equation}
The expansion \eqref{a4zeta2} indicates:
\begin{equation}\label{a4D}
    \begin{split}
        d_{n-1,N}(s)&=\frac{\kappa_{n-1,N}\tilde{\epsilon}_{n-1,N}}{\sigma_{n-1,N}}\left(1+\frac{2s_{n-1,N}\tilde{\epsilon}_{n-1,N}}{5}+O\left(\tilde{\epsilon}_{n-1,N}^2\right)\right)-s\\
        &=\frac{\kappa_{n-1,N}}{\sigma_{n-1,N}}\delta_{n-1,N}+\left(\frac{\tilde{\sigma}_{n,N}}{\sigma_{n-1,N}}-1\right)s+\frac{2s_{n-1,N}\kappa_{n-1,N}}{5\sigma_{n-1,N}}\tilde{\epsilon}^2_{n-1,N}+O\left(\frac{\tilde{\epsilon}^3_{n-1,N}\kappa_{n-1,N}}{\sigma_{n-1,N}}\right)\\
        &:=\frac{\kappa_{n-1,N}}{\sigma_{n-1,N}}\delta_{n-1,N}+\left(\frac{\tilde{\sigma}_{n,N}}{\sigma_{n-1,N}}-1\right)s+R'_{n-1,N}(s).
    \end{split}
\end{equation}
and similarly,
\begin{equation*}
    d_{n,N-1}(s):=\frac{\kappa_{n,N-1}}{\sigma_{n,N-1}}\delta_{n,N-1}+\left(\frac{\tilde{\sigma}_{n,N}}{\sigma_{n,N-1}}-1\right)s+R'_{n,N-1}(s).
\end{equation*}
Further defining
\begin{equation*}
    U_N:=\theta_{n-1,N}\frac{\kappa_{n-1,N}}{\sigma_{n-1,N}}\delta_{n-1,N}+\theta_{n,N-1}\frac{\kappa_{n,N-1}}{\sigma_{n,N-1}}\delta_{n,N-1}
\end{equation*}
and
\begin{equation*}
    V_N:=\theta_{n-1,N}\left(\frac{\tilde{\sigma}_{n,N}}{\sigma_{n-1,N}}-1\right)+\theta_{n,N-1}\left(\frac{\tilde{\sigma}_{n,N}}{\sigma_{n,N-1}}-1\right),
\end{equation*}
we can write \eqref{B4D} as:
\begin{equation*}
\begin{split}
      D(\tilde{x}_N(s))&\le
    \theta_{n-1,N}\left(|R_{n-1,N}(s)|+|Ai'(s)R'_{n-1,N}(s)|\right)\\&+\theta_{n,N-1}\left(|R_{n,N-1}(s)|+|Ai'(s)R'_{n,N-1}(s)|\right)+|U_N+V_Ns||Ai'(s)|.
\end{split}
\end{equation*}
As $\theta_{n-1,N}-1$ and $\theta_{n,N-1}-1$ are both of order $N^{-1}$, according to Proposition \ref{B4re} and Proposition \ref{error4d}, the remainders $R$ and $R'$ do not cause any problem for our rate. Through the same core idea as in \cite{el2006rate}, to achieve the promised rate of convergence, we only need to pick $\tilde{\mu}_{n,N}$ and $\tilde{\sigma}_{n,N}$ such that $U_N=V_N=0$ to totally offset the effect of $|Ai'(s)||U_N+V_Ns|$ in $D(\tilde{x}_N(s))$. Recalling that $\theta_{n-1,N}=\sqrt{Nn}\sigma_{n-1,N}^{1/2}\frac{\tilde{\sigma}_{n,N}}{\tilde{\mu}_{n,N}}$ and $\theta_{n,N-1}=\sqrt{Nn}\sigma_{n,N-1}^{1/2}\frac{\tilde{\sigma}_{n,N}}{\tilde{\mu}_{n,N}}$, we can further express $U_N$ and $U_N$ as:
\begin{equation*}
\begin{array}{cc}
      U_N=\tilde{\sigma}_{n,N}\sqrt{Nn}\left(\left(\frac{\tilde{\mu}_{n,N}}{\mu_{n-1,N}}-1\right)\frac{1}{\sigma_{n-1,N}^{1/2}}+\left(\frac{\tilde{\mu}_{n,N}}{\mu_{n,N-1}}-1\right)\frac{1}{\sigma_{n,N-1}^{1/2}}\right),\\
      \\
        V_N=\tilde{\sigma}_{n,N}\sqrt{Nn}\left(\left(\frac{\tilde{\sigma}_{n,N}}{\sigma_{n-1,N}}-1\right)\frac{\sigma_{n-1,N}^{1/2}}{\mu_{n-1,N}}+\left(\frac{\tilde{\sigma}_{n,N}}{\sigma_{n,N-1}}-1\right)\frac{\sigma_{n,N-1}^{1/2}}{\mu_{n,N-1}}\right).
\end{array}
\end{equation*}
Hence, picking $\tilde{\mu}_{n,N}$ and $\tilde{\sigma}_{n,N}$ as in \eqref{recenter} and \eqref{rescal} helps us totally kill $U_N$ and $V_N$.\qed

\subsection{Proof of Lemma \ref{inlarge}}\label{P4inlarge}
    We use the same $C(\gamma,r)$ to denote different constants in the proof below to simplify notation.

   Consider $G_{\tau}$ and $H_{\tau}$. In fact, noticing that $\tilde{\mu}_{n,N}-\tilde{\sigma}_{n,N}t\le0$ for any $t\in\left(\frac{\tilde{\mu}_{n,N}}{\tilde{\sigma}_{n,N}},+\infty\right)$ and hence, $\xi_{\tau'}(\tilde{x}_N(t))=\eta_{\tau'}(\tilde{x}_N(t))=0$ when $\tilde{x}_N:=\tilde{\mu}_{n,N}-\tilde{\sigma}_{n,N}t$ in this regime, we only draw attention to $\left[\frac{e_0\tilde\mu_{n,N}}{\tilde\sigma_{n,N}},\frac{\tilde{\mu}_{N,n}}{\tilde{\sigma}_{N,n}}\right]$. Recall $N$ is assumed to be odd,
\begin{equation*}
\begin{split}
    \xi_{\tau}(t)&=\xi_{\tau}(\tilde{x}_N(t))=\frac{-1}{\sqrt2}\left(\frac{\sqrt{Nn}\sigma_{n-1,N}^\frac{1}{2}\tilde{\sigma}_{n,N}}{\mu_{n-1,N}}\right)F_{n-1,N}\left(\tilde{x}_N\right)\left(\frac{\mu_{n-1,N}}{\tilde{x}_N}\right)\\
    &=\frac{-1}{\sqrt2}\left(\frac{\sigma^{\frac{1}{2}}_{n-1,N}\tilde{\sigma}_{n,N}}{\sigma_{n,N}^{\frac{1}{2}}}\right)\sqrt{\frac{Nn(N)!}{(n-1)!}}\tilde{x}_n^{\frac{n-N-2}{2}}L^{n-N-1}_N(\tilde{x}_N)e^{-\frac{\tilde{x}_N}{2}}
    \end{split}
\end{equation*}
The fact that $\xi_{\tau}=0$ for $\tilde{x}_N<0$ is obvious as argued above. Furthermore, as $n-N>2$ when $N$ is large by our assumption, we see the function $\xi_{\tau}$ is well-defined for any $\tilde{x}_N\ge 0$ (the power of $\tilde{x}_N$ is always positive). Recall  $1-e_0=\frac{1}{e}$, applying Lemma \ref{B4LagPoly} to $\xi_{\tau}$,
\begin{equation}\label{varsmall}
\begin{split}
       |\xi_{\tau'}(t)|&\le\frac{1}{\sqrt2}\left(\frac{\sigma^{\frac{1}{2}}_{n-1,N}\tilde{\sigma}_{n,N}}{\sigma_{n,N}^{\frac{1}{2}}}\right)\sqrt{\frac{Nn!}{N!}}\frac{1}{(n-N-1)!}\tilde{x}_N^{\frac{n-N-2}{2}}\\
       &:=K_{n,N}\tilde{x}_N^{\frac{n-N-2}{2}}\le K_{n,N}\left(\frac{\tilde{\mu}_{n,N}}{e}\right)^{\frac{n-N-2}{2}}:=\tilde{K}_{n,N}.
\end{split}.
\end{equation}
By Stirling's formula: $\log(m!)=\frac{1}{2}\log 2\pi+(m+\frac{1}{2})\log m-m+O\left(\frac{1}{m}\right)$, the logarithm of $\tilde{K}_{n,N}$ has the following large $N$ expansion:
\begin{equation}\label{logK'1}
    \begin{split}
       \log\tilde{K}_{n,N}&=\frac{\log (n)!+\log N
        -\log(N!)}{2}-\log((n-N-1)!)\\&~~+\frac{1}{2}\log\frac{\sigma_{n-1,N}}{\sigma_{n,N-1}}+\log\tilde{\sigma}_{n,N}+\frac{n-N-2}{2}\left(\log\tilde{\mu}_{n,N}\right)-\frac{n-N-2}{2}-\frac{1}{2}\log2\\
        &=\frac{n}{2}\log n-\frac{N}{2}\log N-\left(n-N-\frac{1}{2}\right)\log\left(n-N-1\right)\\
        &~~+\frac{1}{4}\left(\log n+\log N+2\log\sigma_{n-1,N}-2\log\sigma_{n,N}+4\log\tilde{\sigma}_{n,N}\right)\\&~~+\frac{n-N-2}{2}\log\tilde{\mu}_{n,N}-\log\sqrt{4\pi}+O\left(N^{-1}\right).
    \end{split}
\end{equation} 
From the order in \eqref{restriction of recaling}, it follows that $\tilde{\mu}_{n,N}=\mu_{n,N}+O(N^{-1})$, $\frac{\sigma_{n,N}}{\sigma_{n-1,N}}=1+O\left(N^{-1}\right)$, $\frac{\sigma_{n,N}}{\tilde\sigma_{n,N}}=1+O\left(N^{-1}\right)$ and hence,
\begin{equation}\label{logK'2}
\begin{split}
        &\frac{n-N-2}{2}\log\tilde{\mu}_{n,N}+\frac{1}{4}\left(\log n+\log N+2\log\sigma_{n-1,N}-2\log\sigma_{n,N}+4\log\tilde{\sigma}_{n,N}\right)\\
        &=\frac{n-N-2}{2}\log \mu_{n,N}+\frac{n-N-2}{2}\log\frac{\tilde{\mu}_{n,N}}{\mu_{n,N}}+\frac{1}{4}\log n+\frac{1}{4}\log N+\frac{1}{2}\log\sigma_{n,N}+O(N^{-1})\\
        &=\frac{n-N}{2}\log n+\frac{n-N}{2}\log F(n,N)+\frac{5}{12}\log n+\frac{1}{4}\log N+O(1)
\end{split}
\end{equation}
where 
\begin{equation*}
    \begin{array}{cc}
       F(n,N):=\left(\sqrt{1+\frac{1}{2n}}-\sqrt{\frac{N}{n}+\frac{1}{2n}}\right)^2
    \end{array}
\end{equation*}
is the function such that $nF(n,N)=\mu_{n,N}$. Plugging \eqref{logK'2} back into \eqref{logK'1} further leads to:
\begin{equation*}
    \begin{split}
     \log\tilde{K}_{n,N}&=-\frac{N}{2}\log\frac{N}{n}+\frac{n-N}{2}\left(\log F(n,N)-2\log\left(1-\frac{N}{n}-\frac{1}{n}\right)\right)+\Theta\left(\log(Nn)\right)=\frac{n}{2}\tilde{F}(n,N).
    \end{split}
\end{equation*}
The limit:
\begin{equation}\label{lim4K}
\begin{split}
  \lim_{N\rightarrow\infty}\tilde{F}(n,N)&=-\gamma\log\gamma+(1-\gamma)\left(-2\log(1-\gamma)+2\log(1-\sqrt\gamma)\right)\\
  &=-\gamma\log\gamma+2(\gamma-1)\log(1+\sqrt\gamma)
  \end{split}
\end{equation}
is immediate by the assumption that $\frac{N}{n}\rightarrow\gamma$.
Now focus on the function $h(t):=t^2\log t+(1-t^2)\log(1+t)$ with $t\in(0,1)$ for a moment. It is easy to see $h$ is a differentiable function with $h(0_+)=h(1_-)=0$. Furthermore, directly taking the derivative gives:
\begin{equation*}
    h'(t)=2t\log t-2t\log(1+t)+1=1-2t\log\left(1+\frac{1}{t}\right).
\end{equation*}
Using the fact that $\left(1+\frac{1}{t}\right)^t$ is a strictly increasing function on $\mathbb{R}^+$, we see $h'$ is a strictly decreasing function on $(0,1)$ with $h'(0_-)=1>0$ and $h'(1_+)=1-\log 4<0$. The intermediate value theorem yields the existence of a unique $t_0\in(0,1)$ such that $h'(t_0)=0$ and thereby, $h$ is strictly increasing on $(0,t_0)$ and strictly decreasing on $(t_0,1)$. As a result, $h(t)>0$ for any $t\in(0,1)$. Since $-2h(\sqrt\gamma)$ equals the limit of $\tilde{F}(n,N)$ in \eqref{lim4K}, for $N$ large enough, there exists a positive constant $C(\gamma)$ determined by $\gamma$ such that $\tilde{F}(n,N)\le -C(\gamma)$ and hence, $r\log N+\log\tilde{K}_{n,N}\le-C(\gamma,r)n$ for any fixed $r$. Plugging it back into \eqref{varsmall} when $N$ is large enough, we finally get
\begin{equation*}
\begin{split}
       N^r|\xi_{\tau}(t)|&=N^r|\xi_{\tau}(t)e^{\frac{t}{2}}e^{-\frac{t}{2}}|\le e^{-C(\gamma,r)n}\left(\sup_{t\in\left[0,\frac{\tilde{\mu}_{n,N}}{\tilde{\sigma}_{n,N}}\right)}e^{\frac{t}{2}}\right)e^{-\frac{t}{2}}\\
       &=\exp\left(-C(\gamma,r)n+\frac{\tilde{\mu}_{n,N}}{2\tilde{\sigma}_{n,N}}\right)e^{-\frac{t}{2}}\le C(\gamma,r)e^{-\frac{t}{2}}
\end{split}
\end{equation*}
where we used the fact that $\frac{\tilde{\mu}_{n,N}}{\tilde{\sigma}_{n,N}}=O\left(n^\frac{2}{3}\right)$ in the second line. The argument for $H_{\tau}$ follows from the similar argument.\qed

\subsection{Order of Errors}
In this part, we will analyze the error term $\epsilon_2$ in our L--G approximation. Following the approach in Olver's treatment of the largest turning point $z_2$ (see \cite[Chapter $11$]{olver1997asymptotics}), we now turn our attention to bounding $\epsilon_2$ around the turning point $z_1$. To this end, we make use of the following lemma, which is adapted from Theorem $6.10.2$ in \cite{olver1997asymptotics}.

\begin{lemma}\label{intBound}
    For any interval $I=(a,b)\subset\mathbb{R}$ where $a$ and $b$ are allowed to be infinite, consider the integral equation:
    \begin{equation}\label{I-equa1}
h(x)=\int_a^xK(x,t)\phi(t)\left[J(t)+h(t)\right]dt.
    \end{equation}
Assume the following conditions hold:
\begin{itemize}
    \item The functions $\phi$ and $J$ are continuous on $I$.
    \item The kernel $K(x,t)$ and its first two partial $x$ derivatives are all continuous for any $(x,t)\in I^2$. Additionally, $K(x,x)=0$ for any $x\in I$.
    \item There exist positive continuous functions $P_0,~P_1$ and nonnegative continuous $Q$ on the real line such that for any $(x,t)\in I^2$,
    \begin{equation}\label{QPres}
        |K(x,t)|\le P_0(x)Q(t),~~|\partial_1K(x,t)|\le P_1(x)Q(t).
    \end{equation}
    \item For any $x\in I$, $\Phi(x):=\int_a^x|\phi(t)|dt$ is convergent. Moreover, 
    \begin{equation}\label{Cres}
        \kappa:=\sup_{x\in I}|Q(x)J(x)|~~and~~\kappa_0:=\sup_{x\in I}|P_0(x)Q(x)|
    \end{equation} are all finite.
    \item If $a=-\infty$, then additionally there exist $C\ge0$ and $R>0$ such that $|Q(x)|\le C$ for any $x\le-R$.
\end{itemize}
Then there is a unique solution $h$ of \eqref{I-equa1} such that
\begin{equation}\label{expB4h}
    \frac{|h(x)|}{P_0(x)},~\frac{|h'(x)|}{P_1(x)}\le\frac{\kappa}{\kappa_0}\left(e^{\kappa_0\Phi(x)}-1\right).
\end{equation}
\end{lemma}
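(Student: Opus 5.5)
Since this is Olver's Theorem 6.10.2, I plan to prove it by the classical Liouville--Neumann successive-approximation scheme. Set $h_0\equiv 0$ and
\[
h_{k+1}(x)-h_k(x)=\int_a^x K(x,t)\phi(t)\bigl(h_k(t)-h_{k-1}(t)\bigr)\,dt,\qquad k\ge1,
\]
with first step $h_1(x)=\int_a^x K(x,t)\phi(t)J(t)\,dt$. The first task is to show $h_1$ is well defined. Using \eqref{QPres} and \eqref{Cres}, I would bound $|h_1(x)|\le P_0(x)\int_a^x |Q(t)J(t)||\phi(t)|\,dt\le \kappa P_0(x)\Phi(x)$. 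When $a=-\infty$, the convergence of $\Phi$ near $-\infty$ together with the extra hypothesis that $Q$ is bounded for $x\le -R$ guarantees absolute convergence of the improper integral, so the terms are continuous by dominated convergence.

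By induction on $k$ I would then aim for the estimate
\[
|h_{k+1}(x)-h_k(x)|\le \kappa\,P_0(x)\,\frac{\kappa_0^{k}\Phi(x)^{k+1}}{(k+1)!}.
\]
The inductive step runs as follows. Insert the bound for step $k$ into the recursion and use $|K(x,t)|\le P_0(x)Q(t)$ and $Q(t)P_0(t)\le\kappa_0$. This reduces the estimate to $\kappa P_0(x)\kappa_0^k\int_a^x|\phi(t)|\Phi(t)^k/k!\,dt$, and since $\Phi'=|\phi|$ that integral equals $\Phi(x)^{k+1}/(k+1)!$. Summing the series gives uniform convergence on compact subsets of $I$ to a continuous $h$, with
\[
\frac{|h(x)|}{P_0(x)}\le \frac{\kappa}{\kappa_0}\bigl(e^{\kappa_0\Phi(x)}-1\bigr).
\]
Passing to the limit under the integral, again by dominated convergence using these majorants, shows that $h$ solves \eqref{I-equa1}.

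For the derivative, I would differentiate \eqref{I-equa1} under the integral sign. Because $K(x,x)=0$, the boundary term vanishes and
\[
h'(x)=\int_a^x\partial_1K(x,t)\phi(t)\bigl(J(t)+h(t)\bigr)\,dt.
\]
Applying $|\partial_1K|\le P_1(x)Q(t)$ together with the already-established bound $|Q(t)h(t)|\le\kappa\bigl(e^{\kappa_0\Phi(t)}-1\bigr)$ gives
\[
|h'(x)|\le P_1(x)\int_a^x|\phi(t)|\,\kappa\,e^{\kappa_0\Phi(t)}\,dt=P_1(x)\frac{\kappa}{\kappa_0}\bigl(e^{\kappa_0\Phi(x)}-1\bigr).
\]
Differentiating under the integral is justified by the continuity of $\partial_1K$ and local uniform domination; the continuity of the second partial derivative is only needed if one wants $h''$, which is what recovers the differential equation later.

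Uniqueness goes by the same pattern. If $h$ and $\tilde h$ are two solutions with $(h-\tilde h)/P_0$ bounded by a multiple of $e^{\kappa_0\Phi}$, then iterating the homogeneous equation gives $|h-\tilde h|\le C P_0\,\kappa_0^k\Phi^k/k!\to0$. I expect the main obstacle to be the technical justification at an infinite endpoint $a=-\infty$: ensuring the iterated improper integrals converge, and that interchanging limits and differentiation is legitimate. I would handle this by working on $[a',x]$ with $a'$ finite, deriving the bounds uniformly in $a'$, and then letting $a'\to a$ using the monotone bound from $\Phi$ and the boundedness of $Q$ for $x\le-R$.
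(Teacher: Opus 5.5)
Your proposal is correct and is essentially the Liouville--Neumann successive-approximation proof of Olver's theorem, which the paper only cites and does not reprove. Two small remarks: the uniqueness you obtain holds within the class of solutions obeying the growth bound, which is how the lemma's ``unique solution \ldots such that'' should be read; and the extra hypothesis that $Q$ is bounded near $-\infty$ is never actually needed, since $|K(x,t)\phi(t)J(t)|\le \kappa P_0(x)|\phi(t)|$ already gives absolute convergence.
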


\begin{proposition}\label{ex1}
    Let $\textbf{M}$ and $\textbf{E}$ be the controlling functions introduced above. Then we have:
    \begin{equation*}
        \left|r_N\left(\frac{\kappa_N}{\sigma_{n,N}^3}\right)^{\frac{1}{6}}\frac{\mathbf{M}\left(\kappa_N^\frac{2}{3}\zeta\right)}{\tilde{f}^\frac{1}{4}(z)\mathbf{E}\left(\kappa_N^\frac{2}{3}\zeta\right)}\left(\frac{\mu_{n,N}}{x_N(s)}\right)\right|\le C(s_0)e^{-s}
    \end{equation*}
    for any $s\in\left[s_0,\frac{2\mu_{n,N}}{3\sigma_{n,N}}\right]$ when $N$ is large enough where $C(s_0)$ is a constant determined by $s_0$.
\end{proposition}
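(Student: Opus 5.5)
We need to bound $r_N(\kappa_N/\sigma_{n,N}^3)^{1/6}\tilde f^{-1/4}(z)\,\mathbf{M}/\mathbf{E}$ evaluated at $\kappa_N^{2/3}\zeta$, times $\mu_{n,N}/x_N(s)$, by $C(s_0)e^{-s}$ for $s\in[s_0,\frac{2\mu_{n,N}}{3\sigma_{n,N}}]$. The plan is to bound each of the three factors separately.

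First, the rational factor. On this range $x_N(s)=\mu_{n,N}-\sigma_{n,N}s\ge \mu_{n,N}/3$ as soon as $s\ge 0$. For negative $s\ge s_0$ we have $x_N(s)\ge\mu_{n,N}$. Hence $\mu_{n,N}/x_N(s)\le 3$ on the whole interval. Also $r_N=1+O(N^{-1})$ by Lemma \ref{c and r}, so $r_N\le 2$ for large $N$.

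Second, the amplitude factor $(\kappa_N/\sigma_{n,N}^3)^{1/6}\tilde f^{-1/4}(z)$ with $z=z_1-\epsilon$ and $\epsilon=\sigma_{n,N}s/\kappa_N\le \mu_{n,N}/(2\kappa_N)$. I would not rely on the expansion \eqref{a4inverse} over the full range, because $\epsilon$ is of order one there. Instead I would argue directly. Write $\tilde f^{-1/4}=(\zeta/f)^{1/4}$. For $z\in[z_1/3,z_1]$ (using $\mu_{n,N}/\kappa_N\to z_1$), the function $f(z)/(z_1-z)=(z_2-z)/(4z^2)$ is bounded above and below by positive constants depending only on $\gamma$. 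Since $\frac23\zeta^{3/2}=\int_z^{z_1}f^{1/2}$, it follows that $\zeta(z)/(z_1-z)$ is also bounded above and below. Therefore $\tilde f^{-1/4}$ is bounded by a constant $C(\gamma)$ on this region, and so is the normalizing prefactor, which has a finite limit. For $s\in[s_0,0]$, where $z$ lies slightly to the right of $z_1$, the same holds by the analytic continuation through the turning point; equivalently, use \eqref{a4inverse} with small $\epsilon$.

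Third, the factor $\mathbf M(\kappa_N^{2/3}\zeta)/\mathbf E(\kappa_N^{2/3}\zeta)$, which must supply the decay $e^{-s}$. For $s\ge s_1$, Proposition \ref{B4z} gives $\frac23\kappa_N\zeta^{3/2}\ge s$, i.e. $X:=\kappa_N^{2/3}\zeta\ge(\frac32 s)^{2/3}$. By Proposition \ref{aux4AiandBi}, $\mathbf M$ is bounded on $[0,\infty)$ and $\mathbf E(X)\ge c\exp(\frac23X^{3/2})$ for $X\ge 0$, since $\mathbf E$ is increasing, positive, and has the stated asymptotics. Hence $\mathbf M/\mathbf E\le C\exp(-\frac23X^{3/2})\le Ce^{-s}$. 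For $s\in[s_0,s_1]$, $X$ ranges in a compact set determined by $s_0$ and $s_1$: by monotonicity of $\zeta$ in $s$ and $\kappa_N^{2/3}\zeta(x_N(s))\to s$, we get $X\in[-2|s_0|,2s_1]$. There $\mathbf M/\mathbf E$ is continuous and bounded, and $e^{s}\le e^{s_1}$, so the bound $C(s_0)e^{-s}$ holds with a constant absorbing $\sup e^{s}$.

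Multiplying the three bounds gives the claim. The constants depend on $\gamma$ through $s_1$ and the ratios, which the statement absorbs into $C(s_0)$. The main obstacle is the uniform bound on the amplitude factor $\tilde f^{-1/4}$ far from the turning point, where $\epsilon$ is not small. I would handle it with the explicit two-sided comparison of $\zeta$ and $f$ with $z_1-z$, rather than with Taylor expansion.
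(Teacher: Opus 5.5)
Your proposal is correct and follows essentially the same route as the paper: bound $r_N$, $(\kappa_N/\sigma_{n,N}^3)^{1/6}$ and $\mu_{n,N}/x_N(s)\le 3$ by constants, extract $e^{-s}$ from the lower bound $\mathbf E(X)\ge c\,e^{\frac23 X^{3/2}}$ together with Proposition \ref{B4z} for $s\ge s_1$, and absorb $e^{s}$ into the constant on $[s_0,s_1]$. The only difference is in the amplitude factor for $s\ge s_1$. The paper pairs $\tilde f^{-1/4}$ with $\mathbf M(X)\le c_1X^{-1/4}$ to get $c_1(\kappa_N^{2/3}f)^{-1/4}$, and then uses the convexity bound $f(z)\ge -f'(z_1)(z_1-z)$; your two-sided comparison $\zeta\asymp z_1-z\asymp f$ on $[z_1/3,z_1]$ instead bounds $\tilde f^{-1/4}$ on its own. (One small slip: on this range $\epsilon\le 2z_1/3$, not $\mu_{n,N}/(2\kappa_N)$, but your comparison region $[z_1/3,z_1]$ is already the correct one.)
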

\begin{proof}
 From the explicit expression of parameters independent of $s$, we see
\begin{equation*}
     \left|r_N\left(\frac{\kappa_N}{\sigma_{n,N}^3}\right)^{\frac{1}{6}}\left(\frac{\mu_{n,N}}{x_N(s)}\right)\right|\le C
\end{equation*}
for some constant $C$. Actually, the boundedness of the first two terms is straightforward by their respective order. For the third term, as $1-\frac{\sigma_{n,N}}{\mu_{n,N}}s\ge\frac{1}{3}$ for any $s$ in the required regime, we have $x_N(s)=\mu_{n,N}-\sigma_{n,N}s\ge\frac{\mu_{n,N}}{3}$ and hence, $\left|\frac{\mu_{n,N}}{x_N(s)}\right|\le3$ for any $s$ in our setting.
As the controlling functions satisfy $\mathbf{E}(x)\ge c_0e^{\frac{2}{3}x^\frac{3}{2}}$ and $\mathbf{M}(x)\le c_1x^{-\frac{1}{4}}$ for any $x\ge 0$ and absolute constants $c_0$, $c_1$ (see \cite[Chapter $11$]{olver1997asymptotics}). Let $s_1$ be as in Proposition \ref{B4z}. For $s\in\left[s_1,\frac{2\mu_{n,N}}{3\sigma_{n,N}}\right]$, we have $\frac{1}{\mathbf{E}\left(\kappa_N^\frac{2}{3}\zeta\right)}\le c_0^{-1}e^{-\frac{2}{3}\kappa_N\zeta^{\frac{3}{2}}}\le Ce^{-s}$ according to the same proposition. Furthermore, as in the proof of this proposition again, the fact that $f(z)\ge\frac{z_2-z_1}{4z_1^2}\frac{\sigma_{n,N}}{\kappa_N}s$ leads to $\kappa_N^{\frac{2}{3}}f(z)\ge C'$ for some constant $C'$ when $N$ is large. These bounds above derive:
\begin{equation*}
    \tilde{f}^{-\frac{1}{4}}(z)\mathbf{M}(\kappa_N^{\frac{2}{3}}\zeta)\le c_1\frac{\zeta^\frac{1}{4}}{f^\frac{1}{4}}\kappa_N^{-\frac{1}{6}}\zeta^{-\frac{1}{4}}=c_1\left(\kappa_N^\frac{2}{3}f\right)^{-\frac{1}{4}}\le C''
\end{equation*}
for some positive constant $C''$ and hence, the required inequality is true for $s\in\left[s_1,\frac{2\mu_{n,N}}{3\sigma_{n,N}}\right]$.

For $s\in[s_0,s_1]$, we have $z=z_1+\Theta\left(N^{-\frac{2}{3}}\right)$. The limit $\lim\limits_{z\rightarrow z_1}\tilde{f}(z)=\dot{\zeta}^2(z_1)$ yields the fact that $\tilde{f}^{-\frac{1}{4}}(z)\le \frac{1}{2}\dot{\zeta}^{-\frac{1}{2}}({z_1})\le C$ for some constant $C$ and large $N$. Since $\mathbf{ M}\le1$ and $\mathbf{ E}\ge 1$, the left side of the required inequality is bounded by some positive constant $C$ on $[s_0,s_1]$ when $N$ is large. Define $L(s_0):=\sup\limits_{t\in[s_0,s_1]}e^{t}<\infty$, it follows that
\begin{equation*}
      \left|r_N\left(\frac{\kappa_N}{\sigma_{n,N}^3}\right)^{\frac{1}{6}}\frac{\mathbf{M}\left(\kappa_N^\frac{2}{3}\zeta\right)}{\tilde{f}^\frac{1}{4}(z)\mathbf{E}\left(\kappa_N^\frac{2}{3}\zeta\right)}\left(\frac{\mu_{n,N}}{x_N(s)}\right)\right|\le CL(s_0)
\end{equation*}
and hence, the required inequality is also true on $[s_0,s_1]$.\end{proof}

\begin{proposition}\label{Psi}
     Let $\Psi(\zeta)$ be the perturbation introduced in \eqref{W-equa2'} where $\zeta$ is the L--G transform around $z_1$ as in \eqref{LG1}. Then the following large $\zeta$ (small $z$) expansions hold:
     \begin{equation}\label{order of pert}
     \tilde{f}\sim \left(\frac{\kappa_N}{6a}\right)^\frac{2}{3}\frac{z_1z_2}{z^2\log^\frac{2}{3}\left(\frac{1}{z}\right)},~~\tilde{f'}\sim \left(\frac{\kappa_N}{12\sqrt 2a}\right)^\frac{2}{3}\frac{z_1z_2}{z^3\log^\frac{2}{3}\left(\frac{1}{z}\right)}~and~\Psi(\zeta)\sim\frac{5}{16\zeta^2}
     \end{equation}
     where the prime denotes the derivative with respect to $z$.
\end{proposition}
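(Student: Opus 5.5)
The statement concerns the behaviour of $\tilde f$, $\tilde f'$ and $\Psi$ as $z\to0^+$, which corresponds to $\zeta\to+\infty$. The plan is to read everything off the explicit formula \eqref{ex4zeta1}, namely $\frac{2}{3}\zeta^{3/2}(z)=C_0(z)-\frac{a}{2\kappa_N}\log z$ with $C_0(z)\to c_0$ finite. This gives the leading-order relation
\begin{equation*}
\zeta^{3/2}(z)\sim\frac{3a}{4\kappa_N}\log\frac{1}{z},\qquad\text{so}\qquad \zeta(z)\sim\left(\frac{3a}{4\kappa_N}\right)^{2/3}\log^{2/3}\frac{1}{z}.
\end{equation*}
Near $0$ we also have $f(z)=\frac{(z-z_1)(z-z_2)}{4z^2}\sim\frac{z_1z_2}{4z^2}$, because $z_1z_2=\omega_N^2$. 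Combining the two in $\tilde f=f/\zeta$ gives the first asymptotic, with the numerical constant collected from $\frac{1}{4}\big(\frac{4\kappa_N}{3a}\big)^{2/3}$. The constants in the stated expansion should be re-checked against this computation, since only the power-law/logarithmic shape is used later.

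For $\tilde f'$ I would avoid differentiating the asymptotic. Instead I would use the exact identities $\tilde f=(\zeta')^2$ and $\zeta'=-(f/\zeta)^{1/2}$, which give
\begin{equation*}
\tilde f'=\frac{f'}{\zeta}-\frac{f\zeta'}{\zeta^2}.
\end{equation*}
Here $f'(z)\sim-\frac{z_1z_2}{2z^3}$ from the formula for $f'$ in Proposition \ref{f M}. The second term is smaller by a factor $z\,\zeta^{-3/2}\cdot\mathrm{const}$, hence negligible. This yields $\tilde f'\asymp z^{-3}\log^{-2/3}(1/z)$, and again the constant is to be reconciled at the end.

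For $\Psi$, I would not use $\tilde f^{-1/4}\frac{d^2}{d\zeta^2}\tilde f^{1/4}$ directly. I would use the form obtained in the proof of Proposition \ref{L-G Trans}:
\begin{equation*}
\Psi=\frac{g}{\tilde f}+\frac{\zeta'''}{2(\zeta')^3}-\frac{3(\zeta'')^2}{4(\zeta')^4}.
\end{equation*}
The standard rewriting in terms of $f$ is
\begin{equation*}
\Psi=\frac{5}{16\zeta^2}+\zeta\left(\frac{4ff''-5f'^2}{16f^3}+\frac{g}{f}\right).
\end{equation*}
This follows from $\zeta(\zeta')^2=f$ by the Schwarzian computation in Olver, Chapter 11. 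With $f\sim\frac{c}{z^2}$, $f'\sim-\frac{2c}{z^3}$, $f''\sim\frac{6c}{z^4}$ and $g=-\frac{1}{4z^2}$, the bracket is
\begin{equation*}
\frac{24c^2-20c^2}{16c^3}z^2-\frac{z^2}{4c}=0
\end{equation*}
at leading order. Hence the bracket is $O(z^3)$, the first correction coming from the $z_1+z_2$ term in $f$. Multiplying by $\zeta\asymp\log^{2/3}(1/z)$, the second summand tends to $0$ much faster than $\zeta^{-2}\asymp\log^{-4/3}(1/z)$. Therefore $\Psi\sim\frac{5}{16\zeta^2}$.

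The main obstacle is this exact cancellation of the $z^2$ terms in the bracket. The conclusion depends on the coefficient of $g$ being precisely $-\frac14$. I will verify it carefully, tracking the next-order terms of $f$, $f'$ and $f''$ in $z$ to confirm that the remainder is $O(z^3\log^{2/3}(1/z))=o(\zeta^{-2})$.
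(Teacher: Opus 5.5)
This is essentially the paper's proof. Both use $\zeta^{3/2}\sim\frac{3a}{4\kappa_N}\log\frac1z$ from \eqref{ex4zeta1}, the exact identity $\tilde f'=f'/\zeta+f^{3/2}/\zeta^{5/2}$ with the first term dominant, and Olver's formula for $\Psi$ with the leading-order cancellation in the bracket. Your argument is correct up to two harmless slips:
\begin{itemize}
\item The two bracket terms are each $\pm\frac{1}{4c}$ with no factor $z^2$. So the remainder is $O(z)$, in fact $\sim -4z/(z_1z_2)^2$, rather than $O(z^3)$. This still suffices, since $z\log^{2/3}(1/z)=o(\log^{-4/3}(1/z))$.
\item The second term of $\tilde f'$ is smaller than the first by a factor $\asymp\zeta^{-3/2}$, not $z\zeta^{-3/2}$. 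This still suffices, since $\zeta^{-3/2}\to0$.
\end{itemize}

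Your caution about the constants is justified. Carried out correctly, your argument gives $\tilde f'\sim-2\bigl(\frac{\kappa_N}{6a}\bigr)^{2/3}\frac{z_1z_2}{z^3\log^{2/3}(1/z)}$. This differs from the stated constant in sign and by a factor $4$. The paper's proof writes $-\frac{z_1z_2}{8z^3\zeta}$ where $f'/\zeta$ actually contributes $-\frac{z_1z_2}{2z^3\zeta}$, and the statement also drops the minus sign.
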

\begin{proof}
    From the expression in \eqref{LG1}, we see $\zeta\rightarrow\infty$ when $z\rightarrow0$ and thus, the large $\zeta$ expansion is equivalent to the small $z$ expansion for all functions above. In the following argument, we mainly focus on the small $z$ expansion for $\tilde{f}$ and large $\zeta$ expansion for $\Psi$ due to the convenience of notations. Since $\zeta^\frac{3}{2}\sim\frac{3a}{4\kappa_N}\log\frac{1}{z}$ according to \eqref{ex4zeta1}, it is clear that 
    \begin{equation}\label{B4tildef}
        \tilde{f}(z)=\frac{f}{\zeta}(z)=\frac{z^2-4z+z_1z_2}{4z^2\zeta(z)}\sim\left(\frac{4\kappa_N}{3a}\right)^\frac{2}{3}\frac{z_1z_2}{4}\frac{1}{z^2\log^\frac{2}{3}\left(\frac{1}{z}\right)}.
    \end{equation}
Moreover, since $\zeta'=-\frac{f^\frac{1}{2}}{\zeta^\frac{1}{2}}$ by \eqref{LG1},
\begin{equation}\label{B4tildef'}
\begin{split}
       \tilde{f}'(z)&=\frac{f'\zeta-\zeta'f}{\zeta^2}(z)=\frac{f'\zeta^\frac{3}{2}+f^\frac{3}{2}}{\zeta^\frac{5}{2}}(z)=\frac{1}{z^2\zeta}-\frac{z_1z_2}{8z^3\zeta}+\frac{\left(z^2-4z+z_1z_2\right)^\frac{3}{2}}{8z^3\zeta^\frac{5}{2}}\\
       &\sim-\left(\frac{4\kappa_N}{3a}\right)^\frac{2}{3}\frac{z_1z_2}{8}\frac{1}{z^3\log^\frac{2}{3}\left(\frac{1}{z}\right)}.
\end{split}
\end{equation}
Next, let's express $\Psi$ in \eqref{W-equa2'} in terms of $\zeta$ and derivatives of $f$ to capture the main order when $\zeta$ is large. Noticing that
\begin{equation*}
    \frac{d^2}{dz^2}\left(\tilde{f}^{-\frac{1}{4}}\right)=\frac{d^2}{dz^2}\left(\frac{1}{\sqrt{-\zeta'}}\right)=\frac{d}{dz}\left(\frac{\zeta''}{2\left(-\zeta'\right)^\frac{3}{2}}\right)=\frac{\zeta'''}{2\left(-\zeta'\right)^\frac{3}{2}}+\frac{3(\zeta'')^2}{4\left(-\zeta'\right)^\frac{5}{2}},
\end{equation*}
we see
\begin{equation*}
    -\tilde{f}^{-\frac{3}{4}}\frac{d^2}{dz^2}\left(\tilde{f}^{-\frac{1}{4}}\right)+\frac{g}{\tilde{f}}=\frac{\zeta'''}{2(\zeta')^3}-\frac{3(\zeta'')^2}{4(\zeta')^4}+\frac{g}{\tilde{f}}=\Psi(\zeta).
\end{equation*}
Again, as $\zeta'=-\frac{f^\frac{1}{2}}{\zeta^\frac{1}{2}}$, by \eqref{B4tildef'}, it follows that
\begin{equation*}
    \begin{split}
        -\tilde{f}^{-\frac{3}{4}}\frac{d^2}{dz^2}\left(\tilde{f}^{-\frac{1}{4}}\right)&=-\tilde{f}^{-\frac{3}{4}}\frac{d}{dz}\left(-\frac{1}{4}\tilde{f}^{-\frac{5}{4}}\tilde{f}'\right)=-\frac{5}{16}\tilde{f}^{-3}\left(\tilde{f}'\right)^2+\frac{1}{4}\tilde{f}^{-2}\tilde{f}''\\
        &=\frac{5}{16\zeta^2}-\frac{5\zeta(f')^2}{16f^3}+\frac{f''\zeta}{4f^2}=\frac{5}{16\zeta^2}+\frac{\zeta}{16f^3}\left(4ff''-5(f')^2\right)
    \end{split}
\end{equation*}
and hence,
\begin{equation}\label{explicit4Psi}
    \begin{split}
        \Psi(\zeta)&=\frac{5}{16\zeta^2}+\frac{4\zeta z^6}{(z^2-4z+z_1z_2)^3}\left(-\frac{2}{z^3}+\frac{6+3z_1z_2}{2z^4}-\frac{3z_1z_2}{z^5}+\frac{(z_1z_2)^2}{4z^6}\right)+\frac{g\zeta}{f}\\
        &=\frac{5}{16\zeta^2}-\frac{\zeta}{(z^2-4z+z_1z_2)^3}\left(z^4+(4-z_1z_2)z^2+4z_1z_2z\right)\sim\frac{5}{16\zeta^2}
    \end{split}
\end{equation}
since $\zeta^\frac{3}{2}\sim\frac{3a}{4\kappa_N}\log\frac{1}{z}$ when $z\rightarrow0$.
\end{proof}

With the estimations in Proposition \ref{Psi} and useful auxiliary functions introduced in Proposition \ref{aux4AiandBi}, applying Lemma \ref{intBound}, we have the following bound regarding $\epsilon_2$.

\begin{proposition}\label{epsilon_2}
    Let $\epsilon_2(\kappa_N,z)$ be the function of $\zeta$ in \eqref{w2}, then
      \begin{equation}\label{B4e_2}
        |\epsilon_2(\kappa_N,z)|\le \frac{\mathbf{M}(\kappa_N^{2/3}\zeta)}{\mathbf{E}(\kappa_N^{2/3}\zeta)}\left[\exp\left(\frac{\lambda_0 \mathcal{V}(\zeta)}{\kappa_N}\right)-1\right].
    \end{equation}
The function $\mathcal{V}$ is defined as $\mathcal{V}(\zeta):=\int_\zeta^\infty|\Psi(t)|t|^{-\frac{1}{2}}dt$ and the constant $\lambda_0:=\sup\limits_{x\in \mathbb{R}}\left\{\pi|x|^\frac{1}{2}\mathbf{M}^2(x)\right\}<\infty$.
\end{proposition}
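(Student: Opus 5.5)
The plan is to rewrite the equation for $\epsilon_2$ as a Volterra integral equation in the Airy variable and then apply Lemma \ref{intBound} with Olver's weight and modulus functions. Set $t=\kappa_N^{2/3}\zeta$. The recessive solution $W_2$ of \eqref{W-equa2} satisfies
\begin{equation*}
\frac{d^2W_2}{dt^2}=\left(t+\kappa_N^{-4/3}\Psi(\zeta)\right)W_2 .
\end{equation*}
Treat $\kappa_N^{-4/3}\Psi W_2$ as a forcing term. Variation of parameters, with the Wronskian $\mathcal{W}\{Ai,Bi\}=\pi^{-1}$, then gives
\begin{equation*}
\epsilon_2(\zeta)=\frac{1}{\kappa_N}\int_\zeta^{\infty}\pi\left[Ai(\kappa_N^{2/3}\zeta)Bi(\kappa_N^{2/3}v)-Bi(\kappa_N^{2/3}\zeta)Ai(\kappa_N^{2/3}v)\right]\Psi(v)\left[Ai(\kappa_N^{2/3}v)+\epsilon_2(v)\right]dv .
\end{equation*}
The integration runs from $\zeta$ to $+\infty$, which corresponds to $z\to 0^+$ on $I_1$. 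This choice is what makes $\epsilon_2$ recessive, matching the choice that excluded $Bi$ in \eqref{w2}. After reflecting the variable, so that the integral runs from the endpoint $a$ in Lemma \ref{intBound}, the equation has exactly the form \eqref{I-equa1}. We take
\begin{equation*}
J=Ai,\qquad \phi(v)=\kappa_N^{-1}\Psi(v),
\end{equation*}
and let $K$ be the bracketed kernel. Then $K(x,x)=0$, and $K$ is smooth.

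The next step is to verify the hypotheses \eqref{QPres} and \eqref{Cres}, using \eqref{MandEzero}. Write $\mathbf{M}_t=\mathbf{M}(\kappa_N^{2/3}t)$ and $\mathbf{E}_t=\mathbf{E}(\kappa_N^{2/3}t)$. For $v\ge\zeta$, monotonicity of $\mathbf{E}$ gives
\begin{equation*}
|Ai(\kappa_N^{2/3}\zeta)Bi(\kappa_N^{2/3}v)|\le \mathbf{M}_\zeta\mathbf{E}_\zeta^{-1}\,\mathbf{M}_v\mathbf{E}_v ,
\end{equation*}
and the product with $Bi(\kappa_N^{2/3}\zeta)Ai(\kappa_N^{2/3}v)$ satisfies the same bound, since $\mathbf{E}_\zeta\mathbf{E}_v^{-1}\le\mathbf{E}_v\mathbf{E}_\zeta^{-1}$. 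The constant factor $\pi$ will be put into $\phi$ below. So we may take
\begin{equation*}
P_0(\zeta)=\frac{\mathbf{M}_\zeta}{\mathbf{E}_\zeta},\qquad Q(v)=\mathbf{M}_v\mathbf{E}_v .
\end{equation*}
For the derivative bound with $P_1$, the analogous choice uses $\mathbf{N}$ from \eqref{NandEfirst}.

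With these choices, $\sup|QJ|\le\sup\mathbf{M}^2$ and $\sup|P_0Q|\le\sup\mathbf{M}^2$ are both finite. To get the constant in the form of \eqref{B4e_2}, redistribute the weights, moving a factor $|v|^{1/2}$ from $Q$ to $\phi$. Then $\kappa_0$ and $\kappa$ are controlled by
\begin{equation*}
\lambda_0=\sup_x \pi|x|^{1/2}\mathbf{M}^2(x),
\end{equation*}
and $\Phi$ becomes $\kappa_N^{-1}\mathcal{V}(\zeta)$, with the powers of $\kappa_N$ from the rescaling $t=\kappa_N^{2/3}v$ cancelling. Lemma \ref{intBound} with $\kappa/\kappa_0\le1$ then yields \eqref{B4e_2}.

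The main obstacle is showing that $\Phi$ is finite, that is, $\mathcal{V}(\zeta)<\infty$, including the behaviour as $\zeta\to\infty$ (the limit $z\to0$) and near the turning point. At the infinite end, Proposition \ref{Psi} gives $\Psi(\zeta)\sim\frac{5}{16\zeta^2}$, so $|\Psi(t)||t|^{-1/2}\sim t^{-5/2}$ is integrable. Near $\zeta=0$ (the point $z_1$), $\Psi$ is continuous because $\tilde f$ is analytic and positive there, with $\tilde f(z_1)=\zeta'(z_1)^2$; the integrand then behaves like $|t|^{-1/2}$, which is integrable. For negative $\zeta$ up to the end of $I_1$, continuity of $\Psi$ on the compact range suffices. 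Finally, I would check the extra hypothesis of Lemma \ref{intBound} for the reflected infinite endpoint. It reduces to boundedness of $\mathbf{M}\mathbf{E}$ weighted appropriately, which I expect requires the asymptotics $\mathbf{E}\sim2^{-1/2}e^{\frac23x^{3/2}}$ and $\mathbf{M}\sim\pi^{-1/2}x^{-1/4}$ from Proposition \ref{aux4AiandBi}.
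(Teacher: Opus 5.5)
Your route is the paper's: variation of parameters with $\mathcal{W}\{Ai,Bi\}=\pi^{-1}$, Olver's weights $P_0=\mathbf{M}/\mathbf{E}$ and $Q\propto|v|^{1/2}\mathbf{M}\mathbf{E}$, the factor $|v|^{-1/2}$ absorbed into $\phi$, and Lemma \ref{intBound} with $\kappa/\kappa_0\le1$. The gap is in the kernel estimate, which as you set it up does not give the constant in \eqref{B4e_2}. Write $u=\kappa_N^{2/3}\zeta$ and let $v$ be $\kappa_N^{2/3}$ times the integration variable, so $u\le v$. You bound $Ai(u)Bi(v)$ and $Bi(u)Ai(v)$ separately by $\frac{\mathbf{M}_u}{\mathbf{E}_u}\mathbf{M}_v\mathbf{E}_v$, so the triangle inequality only gives $|K|\le 2P_0Q$. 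Carried through Lemma \ref{intBound}, this doubles $\kappa_0$ and yields $\exp(2\lambda_0\mathcal{V}/\kappa_N)-1$, not $\exp(\lambda_0\mathcal{V}/\kappa_N)-1$. The missing step, which the paper takes from Olver, is to estimate the difference as a whole through the phase representation \eqref{MandEzero}. With $\rho=\mathbf{E}^2(u)/\mathbf{E}^2(v)\in(0,1]$,
\begin{equation*}
|Ai(u)Bi(v)-Bi(u)Ai(v)|=\mathbf{M}(u)\mathbf{M}(v)\frac{\mathbf{E}(v)}{\mathbf{E}(u)}\left|\rho\sin(\theta_u-\theta_v)+(1-\rho)\sin\theta_u\cos\theta_v\right|\le\mathbf{M}(u)\mathbf{M}(v)\frac{\mathbf{E}(v)}{\mathbf{E}(u)}.
\end{equation*}
The same argument with $\mathbf{N},\omega$ from \eqref{NandEfirst} handles $\partial_1K$. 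Monotonicity of $\mathbf{E}$ is used exactly to get $\rho\le1$.

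There are also some smaller slips to fix.
\begin{itemize}
\item The Wronskian of $Ai(\kappa_N^{2/3}\zeta)$ and $Bi(\kappa_N^{2/3}\zeta)$ in the variable $\zeta$ is $\kappa_N^{2/3}/\pi$. So the prefactor in your integral equation is $\pi\kappa_N^{-2/3}$, not $\pi/\kappa_N$.
\item The $\kappa_N^{-1}$ in the exponent comes from $\kappa_0=\sup_t\pi\kappa_N^{-2/3}|t|^{1/2}\mathbf{M}^2(\kappa_N^{2/3}t)=\lambda_0/\kappa_N$. Accordingly $\Phi=\mathcal{V}$, not $\kappa_N^{-1}\mathcal{V}$.
\item The final check you propose would fail. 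Since $|x|^{1/2}\mathbf{M}(x)\mathbf{E}(x)\sim(2\pi)^{-1/2}x^{1/4}e^{\frac{2}{3}x^{3/2}}$, $Q$ is unbounded at the reflected endpoint $\zeta\to+\infty$. That extra hypothesis of Lemma \ref{intBound} cannot hold here, and the paper does not verify it. It is also not needed: the Picard iteration behind \eqref{expB4h} uses only $\kappa$, $\kappa_0$ and $\Phi$.
\item Recessiveness alone does not identify the $\epsilon_2$ of \eqref{w2} with the Volterra solution $h$. You also need $h/Ai(\kappa_N^{2/3}\zeta)\to0$ as $\zeta\to\infty$, which follows from the bound itself since $\mathcal{V}(\zeta)\to0$. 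Then $w_N(\kappa_Nz)$ and $\tilde f^{-1/4}(Ai+h)$ are both recessive at $z=0$, and their proportionality constant is exactly the $c_N$ of \eqref{conCN}.
\end{itemize}
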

\begin{proof}
    Recalling that $W_2=Ai\left(\kappa_N^\frac{2}{3}\zeta\right)+\epsilon_2(\zeta)$ is a solution of \eqref{W-equa2}, then $\epsilon_2$ satisfies
    \begin{equation}\label{W-epsilon1}
        \frac{d^2\epsilon_2(\zeta)}{d\zeta^2}-\kappa_N^2\zeta\epsilon_2(\zeta)=\Psi(\zeta)\left(Ai\left(\kappa_N^\frac{2}{3}\zeta\right)+\epsilon_2(\zeta)\right)
    \end{equation}
    since $Ai\left(\kappa_N^\frac{2}{3}\zeta\right)$ is a solution of $\frac{d^2W(\zeta)}{d\zeta^2}=\kappa_N^2\zeta W(\zeta)$.
    
    To give a concrete expression of $\epsilon_2$, let's recall some basic results on second order differential equations. Consider a second order linear differential equation with the standard form:
    \begin{equation}\label{second order}
        y''(x)+p(x)y'(x)+q(x)y(x)=g(x).
    \end{equation}
    Suppose $p,~q$ and $g$ are all continuous functions. Then the solution $y$ of \eqref{second order}, if it exists, is unique given some initial conditions: $y(x_0)=y_0$ and $y'(x_0)=y_1$. Now let's check the initial condition of $\epsilon_2$. In fact, according to \eqref{w2}, $\tilde{f}^\frac{1}{4}(z)w_N(\kappa_Nz)=c_NAi\left(\kappa_N^\frac{2}{3}\zeta\right)+c_N\epsilon_2(\zeta)$. Since the leading term of the left side is of order $\frac{z^\frac{a+1}{2}}{z^\frac{1}{2}\log^\frac{1}{6}\log\left(\frac{1}{z}\right)}$ from \eqref{B4tildef} for small $z$, which vanishes as $z\rightarrow0$, we see $\epsilon_2(\zeta)$ also converges to $0$ because $Ai\left(\kappa_N^\frac{2}{3}\zeta\right)\rightarrow0$ when $z\rightarrow0$. Further condition can be obtained by taking the derivative:
    \begin{equation*}
        \frac{1}{4\tilde{f}^\frac{3}{4}}\tilde{f}'w_N(\kappa_Nz)+\tilde{f}^\frac{1}{4}w_N'(\kappa_Nz)\kappa_N-c_NAi'\left(\kappa_N^\frac{2}{3}\zeta\right)\kappa_N^\frac{2}{3}\zeta'=c_N\epsilon_2'(\zeta)\zeta'
    \end{equation*}
    and hence,
        \begin{equation*}
        \frac{1}{4\tilde{f}^\frac{3}{4}}\tilde{f}'w_N(\kappa_Nz)\frac{1}{\zeta'}+\tilde{f}^\frac{1}{4}w_N'(\kappa_Nz)\frac{\kappa_N}{\zeta'}-c_NAi'\left(\kappa_N^\frac{2}{3}\zeta\right)\kappa_N^\frac{2}{3}=c_N\epsilon_2'(\zeta).
    \end{equation*}
    Similarly, according to \eqref{B4tildef}, \eqref{B4tildef'}, the leading orders of the first two terms on the left side are $z^\frac{a}{2}$ and $z^\frac{a}{2}\log^{-\frac{5}{6}}\left(\frac{1}{z}\right)$ respectively since $\zeta'\sim\frac{1}{z\log\frac{1}{z}}$. As a result, they both vanish as $z\rightarrow0$. Combining the fact that $Ai'(x)$ vanishes when $x\rightarrow\infty$, it follows that $\epsilon'_2(\zeta)\rightarrow0$ as $z\rightarrow0$. Consequently, $\epsilon_2$ is the unique solution of \eqref{W-epsilon1} satisfying the initial condition: $\epsilon_2(\zeta)|_{z=0}=\epsilon_2'(\zeta)|_{z=0}=0$.
    
    Assume $y_1$ and $y_2$ are two linearly independent homogeneous solutions of \eqref{second order} (solutions with respect to $g=0$). Then by the Wronskian method, a particular solution of \eqref{second order} is given as:
    \begin{equation}\label{particular solution}
        y_p(x)=y_2(x)\int\frac{y_1(x)g(x)}{\mathcal{W}_{y_1,y_2}(x)}dx-y_1(x)\int\frac{y_2(x)g(x)}{\mathcal{W}_{y_1,y_2}(x)}dx
    \end{equation}
    where $\mathcal{W}_{y_1,y_2}(x):=y_1(x)y'_2(x)-y_2(x)y_1'(x)$ is the Wronskian of $y_1$ and $y_2$. Furthermore, since $y_1$ and $y_2$ are solutions of the homogeneous equation, it is easy to verify $\mathcal{W}'_{y_1,y_2}(x)+p(x)\mathcal{W}_{y_1,y_2}(x)=0$ and hence, $\mathcal{W}_{y_1,y_2}(x)=C\exp\left(-\int p(x)dx\right)$ for $C\in\mathbb{R}$. Regarding $\eqref{W-epsilon1}$ as a standard form with $p(\zeta)=0$, $q(\zeta)=\kappa_N^2\zeta$ and $g(\zeta)=\Psi(\zeta)\left(Ai\left(\kappa_N^\frac{2}{3}\zeta\right)+\epsilon_2(\zeta)\right)$, then clearly $Ai\left(\kappa_N^\frac{2}{3}\zeta\right)$ and $Bi\left(\kappa_N^\frac{2}{3}\zeta\right)$ are two linearly independent homogeneous solutions from our previous reasoning. Moreover, since $p(\zeta)=0$ in \eqref{W-epsilon1}, the Wronskian $\mathcal{W}_{Ai,Bi}(x)$ must be a constant function. According to special values of $Ai$ and $Bi$ at $0$ (see \cite[Equation $11.1.03\&11.1.11$]{olver1997asymptotics}),
    \begin{equation}\label{W4AiBi}
              \mathcal{W}_{Ai,Bi}(x)=Ai(0)Bi'(0)-Ai'(0)Bi(0)=\frac{2}{\sqrt3}\frac{1}{\Gamma\left(\frac{2}{3}\right)\Gamma\left(\frac{1}{3}\right)}=\frac{2}{\sqrt3}\frac{\sin\frac{\pi}{3}}{\pi}=\frac{1}{\pi}
    \end{equation}
    where Euler's reflection formula: $\Gamma(z)\Gamma(1-z)=\frac{\pi}{\sin(z\pi)}$ for $z\notin\mathbb{Z}$ has been used. According to \eqref{particular solution}, as a particular solution of \eqref{W-epsilon1} vanishes at $\infty$, $\epsilon_2$ can be taken as:
    \begin{equation}\label{W-epsilon2}
        \begin{split}
            \epsilon_2(\zeta)=\int^\infty_\zeta K(\zeta,t)\tilde{\Psi}(t) \left(Ai\left(\kappa_N^\frac{2}{3}t\right)+\epsilon_2(t)\right)dt
        \end{split}
    \end{equation}
    where
    \begin{equation*}
    \begin{split}
           &K(\zeta,t)=\pi\kappa_N^{-\frac{2}{3}}|t|^\frac{1}{2} \left(Ai\left(\kappa_N^\frac{2}{3}\zeta\right)Bi\left(\kappa_N^\frac{2}{3} t\right)-Bi\left(\kappa_N^\frac{2}{3}\zeta\right)Ai\left(\kappa_N^\frac{2}{3} t\right)\right):=\pi\kappa_N^{-\frac{2}{3}}|t|^\frac{1}{2}\tilde{K}(\zeta,t)
    \end{split}
    \end{equation*}
    and $\tilde{\Psi}(t)=\Psi(t)|t|^{-\frac{1}{2}}$. The term $|t|^\frac{1}{2}$ has been pulled out to guarantee the finiteness of controlling constants as in Lemma \ref{intBound}. Using the phase function in \eqref{MandEzero}, for any $t\ge\zeta$, let $u:=\kappa_N^\frac{2}{3}\zeta$ and $v:=\kappa_N^\frac{2}{3}t$, then
    \begin{equation}\label{B4K1}
        \begin{split}
            |\tilde{K}(\zeta,t)|&=\mathbf{M}\left(u\right)\mathbf{M}\left(v\right)\frac{\mathbf{E}\left(v\right)}{\mathbf{E}\left(u\right)}\left|\sin\theta_u\cos\theta_v-\frac{\mathbf{E}^2\left(u\right)}{\mathbf{E}^2\left(v\right)}\sin\theta_v\cos_u\right|\\
&\le\left(\mathbf{M}\left(u\right)\mathbf{M}\left(v\right)\frac{\mathbf{E}\left(v\right)}{\mathbf{E}\left(u\right)}\right)\left|\frac{\mathbf{E}^2(u)}{\mathbf{E}^2(v)}\sin\left(\theta_u-\theta_v\right)+\left(1-\frac{\mathbf{E}^2(u)}{\mathbf{E}^2(v)}\right)\sin\theta_u\cos\theta_v\right|\\
&\le\mathbf{M}\left(u\right)\mathbf{M}\left(v\right)\frac{\mathbf{E}\left(v\right)}{\mathbf{E}\left(u\right)}
        \end{split}
    \end{equation}
    since $\frac{\mathbf{E}^2(u)}{\mathbf{E}^2(v)}\in(0,1]$ by the monotonicity of $\mathbf{E}$ on $\mathbb{R}^+$. Similarly, the phase function in \eqref{NandEfirst} yields:
    \begin{equation}\label{B4K2}
        \begin{split}
            |\partial_1\tilde{K}(\zeta,t)|&=\kappa_N^\frac{2}{3}\left|Ai'(u)Bi(v)-Ai(v)Bi'(u)\right|\\
            &=\kappa_N^\frac{2}{3}\mathbf{N}(u)\mathbf{M}(v)\frac{\mathbf{E}(v)}{\mathbf{E}(u)}\left|\sin\omega_u\cos\theta_v-\frac{\mathbf{E}^2(u)}{\mathbf{E}^2(v)}\sin\theta_v\cos\omega_u\right|\\&\le\kappa_N^\frac{2}{3}\mathbf{N}(u)\mathbf{M}(v)\frac{\mathbf{E}(v)}{\mathbf{E}(u)}.
        \end{split}
    \end{equation}
    Define $Q(t):=\pi\kappa_N^{-\frac{2}{3}}|t|^\frac{1}{2}\mathbf{M}\left(\kappa_N^\frac{2}{3}t\right)\mathbf{E}\left(\kappa_N^\frac{2}{3}t\right)$, $P_0(\zeta):=\frac{\mathbf{M}\left(\kappa_N^\frac{2}{3}\zeta\right)}{\mathbf{E}\left(\kappa_N^\frac{2}{3}\zeta\right)}$ and $P_1(\zeta):=\frac{\mathbf{N}\left(\kappa_N^\frac{2}{3}\zeta\right)}{\mathbf{E}\left(\kappa_N^\frac{2}{3}\zeta\right)}$, it follows that
    \begin{equation*}
        |K(\zeta,t)|\le P_0(\zeta)Q(t)~~and~~|\partial_1K(\zeta,t)|\le P_1(\zeta)Q(t)
    \end{equation*}
    from \eqref{B4K1} and \eqref{B4K2}. Moreover, according to \cite[Equation $(11.2.15)$]{olver1997asymptotics}, $\lambda_0:=\sup\limits_{x\in\mathbb{R}}\left\{\pi\left|x\right|^\frac{1}{2}\mathbf{M}^2(x)\right\}$ is a finite number which is slightly greater than $1$ and $\lambda_1:=\sup\limits_{x\in\mathbb{R}}\left\{\pi\mathbf{E}(x)\mathbf{M}(x)\left|x\right|^\frac{1}{2}|Ai(x)|\right\}=1$. Now going back to the controlling constants as in \eqref{Cres}, since the function $J(t)$ is taken as $Ai\left(\kappa_N^\frac{2}{3}t\right)$ in our setting,
    \begin{equation*}
 \begin{array}{cc}
\kappa:=\sup\limits_{t\in\mathbb{R}}|Q(t)J(t)|=\sup\limits_{t\in\mathbb{R}}\left|\pi\kappa_N^{-\frac{2}{3}}|t|^\frac{1}{2}\mathbf{M}\left(\kappa_N^\frac{2}{3}t\right)\mathbf{E}\left(\kappa_N^\frac{2}{3}t\right)Ai\left(\kappa_N^\frac{2}{3}t\right)\right|=\frac{1}{\kappa_N},\\
\\
\kappa_0:=\sup\limits_{t\in\mathbb{R}}|Q(t)P_0(t)|=\sup\limits_{t\in\mathbb{R}}\left|\pi\kappa_N^{-\frac{2}{3}}|t|^\frac{1}{2}\mathbf{M}^2\left(\kappa_N^\frac{2}{3}t\right)\right|=\frac{\lambda_0}{\kappa_N}.
\end{array}
\end{equation*}
The convergence of $\Phi(z)$, or equally, $\mathcal{V}(\zeta)$, is straightforward by the expansion of $\Psi(\zeta)$ in Proposition \ref{Psi}, which will be discussed later. Consequently, Lemma \ref{intBound} applies and hence,
\begin{equation*}
    |\epsilon_2(\kappa_N,z)|\le P_0(\zeta)\frac{\kappa}{\kappa_0}\left(e^{\kappa_0\Phi(z)}-1\right)\le\frac{\mathbf{M}(\kappa_N^{2/3}\zeta)}{\mathbf{E}(\kappa_N^{2/3}\zeta)}\left[\exp\left(\frac{\lambda_0 \mathcal{V}(\zeta)}{\kappa_N}\right)-1\right]
\end{equation*}
since $\lambda_0>1$.
\end{proof}

\begin{proposition}\label{ex2}
    Let $\epsilon_2(\kappa_N,z)$ be the error term and $\mathcal{V}(\omega_N,\zeta)$ be the controlling function introduced before. Then for any $\delta\in(0,1)$, we have $\mathcal{V}(\omega,\zeta)$ is bounded for any $\omega\in[\delta,2-\delta]$ and $s\in\left[s_0,\frac{2\mu_{n,N}}{3\sigma_{n,N}}\right]$ when $N$ is large. Hence, the error is controlled by
    \begin{equation}\label{B4epsilon}
        |\epsilon_2(\kappa_N,z)|\le C(\delta)\frac{\mathbf{M}\left(\kappa_N^\frac{2}{3}\zeta\right)}{\kappa_N\mathbf{E}\left(\kappa_N^\frac{2}{3}\zeta\right)}
    \end{equation}
    for some constant $C(\delta)$ determined by $\delta$ when $N$ is large under our assumption.
\end{proposition}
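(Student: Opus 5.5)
Given Proposition \ref{epsilon_2}, the bound \eqref{B4epsilon} follows once $\mathcal{V}(\zeta)=\int_\zeta^\infty|\Psi(t)||t|^{-1/2}dt$ is bounded uniformly in $N$ and in $\omega_N\in[\delta,2-\delta]$ for all $\zeta$ in the relevant range. Indeed, if $\mathcal{V}\le V(\delta)$, then $e^{\lambda_0\mathcal{V}/\kappa_N}-1\le \lambda_0 V(\delta)e^{\lambda_0V(\delta)}\kappa_N^{-1}$ for $\kappa_N\ge1$, giving \eqref{B4epsilon} with $C(\delta)=\lambda_0V(\delta)e^{\lambda_0V(\delta)}$. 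Since $\mathcal{V}$ is nonincreasing in $\zeta$, it suffices to bound $\mathcal{V}(\zeta_{\min})$, where $\zeta_{\min}$ is the value of $\zeta$ at $s=s_0$ (possibly slightly negative, since $z$ may sit to the right of $z_1$ by $O(N^{-2/3})$). In fact I would bound the full integral $\int_{\zeta(b)}^\infty$ for a fixed $b\in(z_1,2)$ depending only on $\delta$, which covers every $s\ge s_0$ once $N$ is large.

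I would split the $\zeta$-range into three pieces. First, a compact neighbourhood $|\zeta|\le \zeta^*$ of the turning point, corresponding to $z\in[z_1-\eta,b]$. Here $f$ has a simple zero at $z_1$ with $f'(z_1)=(z_1^2-z_1z_2)/(4z_1^3)\neq0$, and $z_1$ stays in a compact subset of $(0,2)$ when $\omega\in[\delta,2-\delta]$. So $\zeta$ is an analytic function of $z$ with $\zeta'(z_1)\ne0$ (Olver's standard argument), and $\tilde f$ is analytic and positive. Hence $\Psi$, as given by the formula after \eqref{W-equa2}, is continuous in $(\zeta,\omega)$ on a compact set and is bounded. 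The weight $|t|^{-1/2}$ is integrable near $0$, so this piece is bounded by $C(\delta)$.

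Second, the intermediate range $\zeta\in[\zeta^*,Z]$, corresponding to $z\in[\epsilon_0,z_1-\eta]$. This is again compact in $(z,\omega)$, $f>0$ and $f,g$ are smooth there, so the integral is bounded by continuity and compactness.

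Third, the tail $\zeta\ge Z$, i.e. small $z$. By Proposition \ref{Psi}, $\Psi(\zeta)\sim \frac{5}{16\zeta^2}$. The second term in \eqref{explicit4Psi} is $O(\zeta z)$, and $z$ decays like $\exp(-c\kappa_N\zeta^{3/2}/a)$ by \eqref{ex4zeta1}, so it is negligible. Thus $|\Psi(t)||t|^{-1/2}\le C t^{-5/2}$, which is integrable at infinity.

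The main obstacle is \emph{uniformity}. Because $\omega_N=a/\kappa_N$ varies with $N$, the relation $\zeta^{3/2}\sim\frac{3a}{4\kappa_N}\log\frac1z$ has a coefficient $\frac{3\omega_N}{8}$, which is bounded above and below for $\omega\in[\delta,2-\delta]$. I must also check that the remainder in Proposition \ref{Psi} is uniform in $\omega$. I would do this directly from the explicit formula \eqref{explicit4Psi}: the denominator $(z^2-4z+\omega^2)^3\ge c(\delta)>0$ for $z\le\epsilon_0$, the numerator is $O(z)$ uniformly, and $\zeta\le C(\delta)\log^{2/3}(1/z)$. Then $|\Psi-\frac{5}{16\zeta^2}|\le C(\delta)z\log^{2/3}(1/z)\le C(\delta)\zeta^{-2}$ on this range. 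Finally, I would check that the choice of $Z$ (equivalently $\epsilon_0$) can be made independent of $N$, which holds because $\frac{2}{3}\zeta^{3/2}=C_0(z)-\frac{\omega}{2}\log z$ with $C_0$ continuous in $(z,\omega)$ up to $z=0$.
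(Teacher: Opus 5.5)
Your proposal is correct and follows essentially the same route as the paper. You bound $\Psi$ by continuity on a compact $(z,\omega)$-region containing the turning point (the paper uses Olver's representation $\tilde f=p^2(\tfrac32 q)^{-2/3}$, where you invoke analyticity of $\zeta$ at a simple turning point), control the tail $z\to0$ by $|\Psi(t)||t|^{-1/2}\le Ct^{-5/2}$ from Proposition \ref{Psi}, take the supremum over $\omega\in[\delta,2-\delta]$, and feed the resulting bound on $\mathcal{V}$ into Proposition \ref{epsilon_2}, exactly as the paper does; your explicit check via \eqref{explicit4Psi} that the tail remainder and the cutoff $\epsilon_0$ can be chosen uniformly in $\omega$ fills in a step the paper only asserts (its $z_0$ is chosen for each $\omega$ but then used as a single cutoff).
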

\begin{proof}
    Adapting the method in \cite{el2006rate} of the discussion around $z_2$, the bound around $z_1$ can be obtained similarly. Since $z=\frac{x_N(s)}{\kappa_N}$, we see $0<z<2$ for any $s\in\left[s_0,\frac{2\mu_{n,N}}{3\sigma_{n,N}}\right]$ when $N$ is large. Hence, \cite[Lemma $11.3.1$]{olver1997asymptotics} applies if we replace $(a,b)$ by $(0,2]$ and $x_0$ by $z_1$. Recalling $\zeta\rightarrow\infty$ as $z\rightarrow0$, we first split $(0,2]$ into two pieces for the convenience of argument. In fact, according to Proposition \ref{Psi}, $\Psi(\zeta)\sim\frac{5}{16\zeta^2}$ when $\zeta$ is large and hence, there exists $\zeta_0$ large enough such that $\Psi(\zeta)\le\frac{1}{\zeta^2}$ for any $\zeta>\zeta_0$. Let $z_0$ be the corresponding value such that $\zeta(z_0)=\zeta_0$, then split $(0,2]$ into $(0,z_0)\cup[z_0,2]$. For the second piece, recalling that
    \begin{equation*}
        \frac{2}{3}\zeta^\frac{3}{2}(z)=\int_{z}^{z_1}f^{1/2}(\omega_N,t)dt
    \end{equation*}
    for $z\le z_1$ and any fixed $\omega_N$, we have
    \begin{equation*}
        \tilde{f}(z)=p^2(\omega_N,z)\left(\frac{3}{2}q(\omega_N,z)\right)^{-\frac{2}{3}}
    \end{equation*}
    where
    \begin{equation*}
        p(\omega_N,z)=\frac{(z_2-z)^{1/2}}{2z},~and~~q(\omega_N,z)=\frac{1}{(z_1-z)^{3/2}}\int_{z}^{z_1}(z_1-t)^{1/2}p(\omega_N,t)dt.
    \end{equation*}
    As indicated by Olver in \cite{olver1997asymptotics}, $q(\omega_N,z)\rightarrow\frac{2}{3}p(\omega_N,z_1)>0$ as $z\rightarrow z_1$, we see both $p(\omega,z)$ and $q(\omega,z)$ are bounded away from $0$ on $[\delta,2-\delta]\times[z_0,2]$. This fact makes both $\tilde{f}(\omega,z)$ and $\tilde{f}^{-1}(\omega,z)$ well-behaved functions on $[\delta,2-\delta]\times[z_0,2]$ and hence,
    \begin{equation*}
        \Psi(\zeta)=\tilde{f}^{-\frac{1}{4}}\frac{d^2\left(\tilde{f}^{\frac{1}{4}}\right)}{d\zeta^2}+\frac{g}{\tilde{f}}
    \end{equation*}
    a continuous function on the same domain. According the the explicit L--G transform in \eqref{LG1}, it is easy to see $\zeta$ is a nonincreasing function with respect to $z$. So for any $\omega\in[\delta,2-\delta]$, we conclude $\zeta_2(\omega)\le\zeta(\omega):=\zeta(\omega,z)\le\zeta_0(\omega)$ where $\zeta_2$ equals the value of $\zeta$ corresponding to $z=2$.
Combining the explicit expression of $\mathcal{V}$ defined in Proposition \ref{epsilon_2}, for any $\omega\in[\delta,2-\delta]$ and $N$ large enough,
    \begin{equation*}
    \begin{split}
\mathcal{V}(\omega,\zeta)&=\int_{\zeta(\omega)}^\infty|y|^{-\frac{1}{2}}|\Psi(\omega,y)|dy\le\int_{\{z\in[z_0,2]\}}|y|^{-\frac{1}{2}}|\Psi(\omega,y)|dy+\int_{\{z\in(0,z_0)\}}|y|^{-\frac{1}{2}}|\Psi(\omega,y)|dy\\
&\le\int_{\zeta_2(\omega)}^0|y|^{-1/2}|\Psi(\omega,y)|dy+\int_0^{\zeta_0(\omega)}|y|^{-1/2}|\Psi(\omega,y)|dy+\int_{\zeta_0(\omega)}^\infty|y|^{-1/2}|\Psi(\omega,y)|dy\\
&\le\int_{I_2}^0|y|^{-1/2}|\Psi(\omega,y)|dy+\int_0^{S_{0}}|y|^{-1/2}|\Psi(\omega,y)|dy+\int_{I_0}^\infty|y|^{-\frac{5}{2}}dy\\
&:= C(\delta,\omega)\le\sup\limits_{\omega\in[\delta,2-\delta]}C(\delta,\omega):=C(\delta)
    \end{split}
    \end{equation*}
    where $I_r:=\inf_{\omega\in[\delta,2-\delta]}\zeta_r(\omega)$, $S_r:=\sup_{\omega\in[\delta,2-\delta]}\zeta_r(\omega)$ and $C(\delta)$ is some constant determined by $\delta$ only. Here we used the fact $z_0$ can be small such that $\zeta_0$ and $I_0$ are both very large and hence, the convergence of the last integral in the fifth line always holds.
Given the uniform boundedness of $\mathcal{V}$ for $\omega\in[\delta,2-\delta]$, \eqref{B4epsilon} directly follows from the bound in Proposition \ref{epsilon_2}.
\end{proof}

\begin{proposition}\label{B4re}
Let $R_{n-1,N}(s)$ and $R_{n,N-1}(s)$ be the integral remainders defined in \eqref{B4D}. For $s\in\left[s_0,N^{\frac{1}{6}}\right]$, we have
\begin{equation*}
    N^{\frac{2}{3}}R_{n-1,N}(s),~N^\frac{2}{3}R_{n,N-1}(s)\le C(s_0,\gamma)e^{-\frac{s}{2}}
\end{equation*}
when $N$ is large for some constant $C(s_0,\gamma)$.
\end{proposition}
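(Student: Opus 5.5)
Write $d:=d_{n-1,N}(s)$; the case of $R_{n,N-1}$ is identical. The remainder is
\[
R_{n-1,N}(s)=\int_0^{d}(d-t)(s+t)Ai(s+t)\,dt .
\]
Since $|d-t|\le|d|$ on the range of integration, we have the crude bound
\[
|R_{n-1,N}(s)|\le \tfrac12 d^2\sup_{|t|\le|d|}|(s+t)Ai(s+t)| .
\]
So the plan has two steps. First, show that $|d|$ is of order $N^{-1/3}$ with at most polynomial growth in $s$ on $[s_0,N^{1/6}]$. Second, show that the supremum of $|xAi(x)|$ over an interval around $s$ decays exponentially in $s$. Squaring $d$ then gives the order $N^{-2/3}$.

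\textbf{Step 1: size of $d$.} Use the expansion \eqref{a4D}:
\[
d_{n-1,N}(s)=\frac{\kappa_{n-1,N}}{\sigma_{n-1,N}}\delta_{n-1,N}+\Big(\frac{\tilde\sigma_{n,N}}{\sigma_{n-1,N}}-1\Big)s+R'_{n-1,N}(s).
\]
\begin{itemize}
\item By \eqref{restriction of recaling}, $\mu_{n-1,N}-\tilde\mu_{n,N}=O(1)$, so $\delta_{n-1,N}=O(N^{-1})$. With $\kappa/\sigma=\Theta(N^{2/3})$, the first term is $O(N^{-1/3})$.
\item The second term is $O(N^{-1})s$.
\item For the third term, $\tilde\epsilon_{n-1,N}(s)=O(N^{-1})+O(N^{-2/3})s$. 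Hence $\frac{\kappa}{\sigma}\tilde\epsilon^2=O(N^{-2/3})(1+s)^2$, and the cubic error is smaller still, since $s\le N^{1/6}$ gives $\tilde\epsilon=O(N^{-1/2})$.
\end{itemize}
Altogether, $|d|\le C(\gamma)N^{-1/3}(1+|s|)^2$ on $[s_0,N^{1/6}]$. In particular $|d|\le C N^{-1/3}N^{1/3}=C$, and in fact $|d|=o(s)$ for large $s$. I also need uniformity in $s$ of the $O(\cdot)$ terms in Lemma \ref{seq}; it holds because $\tilde\epsilon\le\mu_{n-1,N}/(2\kappa_{n-1,N})$ on this range. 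Then $d^2\le C(\gamma)N^{-2/3}(1+|s|)^4$.

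\textbf{Step 2: exponential decay of the Airy factor.} Split the range of $s$ as in Appendix \ref{P4I1}.
\begin{itemize}
\item For $s\in[s_0,2s_0']$: $|d|\le C(s_0,\gamma)N^{-1/3}\le1$ for $N$ large. So $s+t$ ranges over the fixed compact set $[s_0-1,2s_0'+1]$, where $|xAi(x)|$ is bounded. Then $N^{2/3}|R|\le C(s_0,\gamma)\le C(s_0,\gamma)e^{-s/2}\sup_{[s_0,2s_0']}e^{s/2}$.
\item For $s\in[2s_0',N^{1/6}]$: since $|d|\le CN^{-1/3}s^2\le CN^{-1/3}N^{1/6}s\le s/2$ for $N$ large, we have $s+t\ge s/2\ge1$. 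By \eqref{ExB4Airy}, $|(s+t)Ai(s+t)|\le(3s/2)^{3/4}\exp(-\tfrac23(s/2)^{3/2})$. Therefore
\[
N^{2/3}|R|\le C(\gamma)(1+s)^{4}s^{3/4}\exp\big(-\tfrac{1}{3\sqrt2}s^{3/2}\big)\le C(\gamma)e^{-s/2},
\]
because the superlinear exponential absorbs both the polynomial and $e^{s/2}$.
\end{itemize}
Taking the larger of the two constants gives the claim, and the constant is nonincreasing in $s_0$.

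\textbf{Main obstacle.} The delicate point is Step 1: obtaining an explicit bound $|d|\lesssim N^{-1/3}(1+s)^2$ that holds uniformly up to $s=N^{1/6}$. This requires that the implicit constants in the $O(\tilde\epsilon^2)$ remainder of Lemma \ref{seq} depend only on $\gamma$. I would handle it by bounding the Taylor remainder of the integrand in \eqref{int4epsilon} explicitly, using $1-\epsilon y/z_1\ge 1/3$ and the fact that $A_N$ and $z_1$ converge to positive limits.
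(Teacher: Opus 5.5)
Your proposal is correct and follows essentially the same route as the paper. Both split $[s_0,N^{1/6}]$ at $2s_0'$, bound $|d_{n-1,N}(s)|$ by $N^{-1/3}$ times a polynomial in $s$ using \eqref{a4D}, use boundedness of $xAi(x)$ on a compact set for the small-$s$ piece, and use the decay of $Ai$ at arguments $\ge s/2$ for the large-$s$ piece. The only differences are cosmetic: you bound the integrand by its supremum times $d^2/2$ where the paper pulls out $Ai(s/2)$ and integrates $(d-t)(s+t)$ exactly, and you check $|d|\le s/2$ directly instead of citing the earlier argument for $D_{n,N}$.
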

\begin{proof}
    Split $\left[s_0,N^{\frac{1}{6}}\right]$ into $[s_0,2s_0']$ and $\left[2s_0',N^{\frac{1}{6}}\right]$ as in Lemma \ref{keyI2}. For any $s\in[s_0,2s_0']$, we see $-2|s_0|\le\kappa_{n-1,N}^\frac{2}{3}\zeta(\tilde{x}_N(s))=s+d_{n-1,N}(s)\le 4s_0'$ when $N$ is large. Therefore, for any $(s,t)\in[s_0,2s_0']\times[0,d_{n-1,N}(s)]$, $|(s+t)Ai(s+t)|\le M(s_0)$ for some constant $M$ determined by $s_0$ and hence,
    \begin{equation}\label{smallR}
        \begin{split}
            R_{n-1,N}(s)&=\left|\int_0^{d_{n-1,N}(s)}(d_{n-1,N}(s)-t)(s+t)Ai(s+t)dt\right|\\
            &\le M(s_0)\left|\int_0^{d_{n-1,N}(s)}(d_{n-1,N}(s)-t)dt\right|\\&\le C(s_0)(d_{n-1,N}(s))^2
        \end{split}
    \end{equation}
    for $N$ large enough. As $d_{n-1,N}(s)=\Theta\left(N^{-\frac{1}{3}}\right)$ according to \eqref{a4D} in this regime, the inequality:
    \begin{equation}\label{smallR2}
        N^{\frac{2}{3}}R_{n-1,N}(s)\le \left(\sup_{s_0\le s\le 2s_0'}C(s_0,\gamma)e^{\frac{s}{2}}\right)e^{-\frac{s}{2}}=C(s_0,\gamma)e^{-\frac{s}{2}}
    \end{equation}
    follows immediately from \eqref{smallR}.

    For $s\in\left[2s_0',N^{\frac{1}{6}}\right]$, the fact that $\min\{s+d_{n-1,N}(s),s\}\ge\frac{s}{2}\ge1$ when is $N$ is large, has already been proven in our previous discussion for $D_{n,N}$. Combining the property that the Airy function is nonincreasing and positive on this interval, for $N$ large, we get
    \begin{equation}\label{largeR}
        \begin{split}
            R_{n-1,N}(s)&=\left|\int_0^{d_{n-1,N}(s)}(d_{n-1,N}(s)-t)(s+t)Ai(s+t)dt\right|\\&\le Ai\left(\frac{s}{2}\right)\left|\int_0^{d_{n-1,N}(s)}(d_{n-1,N}(s)-t)(s+t)dt\right|\\
            &=Ai\left(\frac{s}{2}\right)\frac{\left|d_{n-1,N}^3(s)+2d^2_{n-1,N}(s)s\right|}{6}\le Ai\left(\frac{s}{2}\right)d_{n-1,N}^2(s)\left(|d_{n-1,N}(s)|+2|s|\right)\\
            &\le  C(s_0,\gamma)Ai\left(\frac{s}{2}\right)d_{n-1,N}^2(s)\left(|s|^3+2|s|\right)\le C(s_0,\gamma)|s|^3Ai\left(\frac{s}{2}\right)d_{n-1,N}^2(s)\\
            &\le C(s_0,\gamma)|s|^9Ai\left(\frac{s}{2}\right)N^{-\frac{2}{3}}
        \end{split}
    \end{equation}
    where we used that fact that $d_{n-1,N}(s)=O\left(s^3N^{-\frac{1}{3}}\right)$ from \eqref{a4D} and $s\ge2$. Using the bound \eqref{ExB4Airy} for the Airy function and the fact that $s\ge2$ when $N$ is large, we further get
    \begin{equation}\label{largeR2}
N^\frac{2}{3}R_{n-1,N}\le C(s_0,\gamma)e^{-\frac{s}{2}}.
    \end{equation}
    The estimations \eqref{smallR2} and \eqref{largeR2} together give us the bound for $R_{n-1,N}$. The promised result for $R_{n,N-1}$ can be obtained by the same argument.
\end{proof}

\begin{proposition}\label{error4d}
    Let $R'_{n-1,N}(s)$ and $R'_{n,N-1}(s)$ be the error terms given in \eqref{a4D}, then
    \begin{equation*}
        N^{\frac{2}{3}}|R'_{n-1,N}(s)||Ai'(s)|,~N^{\frac{2}{3}}|R'_{n,N-1}(s)||Ai'(s)|\le C(s_0,\gamma)e^{-\frac{s}{2}}
    \end{equation*}
    for any $s\in\left[s_0,N^\frac{1}{6}\right]$ when $N$ is large.
\end{proposition}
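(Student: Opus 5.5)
The plan is to bound $R'_{n-1,N}$ explicitly from its definition in \eqref{a4D} and then multiply by the decay of $|Ai'(s)|$ from Proposition \ref{bound for Airy function}. I only treat $R'_{n-1,N}$; the case $R'_{n,N-1}$ is identical after swapping indices. By \eqref{a4D},
\begin{equation*}
R'_{n-1,N}(s)=\frac{2s_{n-1,N}\kappa_{n-1,N}}{5\sigma_{n-1,N}}\tilde{\epsilon}^2_{n-1,N}(s)+O\left(\frac{\tilde{\epsilon}^3_{n-1,N}(s)\kappa_{n-1,N}}{\sigma_{n-1,N}}\right),
\end{equation*}
with $\tilde{\epsilon}_{n-1,N}(s)=\delta_{n-1,N}+\frac{\tilde\sigma_{n,N}}{\kappa_{n-1,N}}s$ as in \eqref{tildedeviation}. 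The implicit constant in the cubic term comes from Lemma \ref{seq}. That constant is uniform in $N$ because $\tilde\epsilon_{n-1,N}(s)=O(N^{-1/2})$ on $[s_0,N^{1/6}]$, and because $A_N$ and $z_1$ have positive limits depending on $\gamma$.

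First I collect orders. The restriction \eqref{restriction of recaling} gives $\delta_{n-1,N}=O(N^{-1})$. We also have $\tilde\sigma_{n,N}/\kappa_{n-1,N}=\Theta(N^{-2/3})$ and $\kappa_{n-1,N}/\sigma_{n-1,N}=\Theta(N^{2/3})$, while $s_{n-1,N}$ converges to a finite limit depending on $\gamma$. Hence
\begin{equation*}
|\tilde\epsilon_{n-1,N}(s)|\le C(s_0,\gamma)N^{-2/3}\max\{|s|,1\}.
\end{equation*}
Substituting this gives the quadratic term bounded by $C N^{2/3}\cdot N^{-4/3}\max\{|s|,1\}^2=CN^{-2/3}\max\{|s|,1\}^2$. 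The cubic term is bounded by $CN^{2/3}N^{-2}\max\{|s|,1\}^3$, which is at most $CN^{-2/3}\max\{|s|,1\}^2$ since $|s|\le N^{1/6}$. Therefore
\begin{equation*}
N^{2/3}|R'_{n-1,N}(s)|\le C(s_0,\gamma)\max\{|s|,1\}^2 \quad\text{on } [s_0,N^{1/6}].
\end{equation*}

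Next I absorb the polynomial factor using $Ai'$. I split as before into $[s_0,\max\{s_0',1\}]$ and $[\max\{s_0',1\},N^{1/6}]$. On the compact piece, $|Ai'|$ is bounded, so $\max\{|s|,1\}^2|Ai'(s)|e^{s/2}$ is bounded by a constant $C(s_0)$. On the remaining piece, $s\ge1$, so \eqref{ExB4Airy'} gives $|Ai'(s)|\le \frac{s^{1/4}}{2\sqrt\pi}e^{-\frac23 s^{3/2}}\le C s^{1/4}e^{-s}$. Then $s^{9/4}e^{-s}\le Ce^{-s/2}$, and combining the two pieces gives $N^{2/3}|R'_{n-1,N}(s)||Ai'(s)|\le C(s_0,\gamma)e^{-s/2}$. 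Since every constant is a supremum over $[s_0,\cdot]$ of a continuous function, the constant is non-increasing in $s_0$.

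The main obstacle is making the $O(\cdot)$ term from Lemma \ref{seq} uniform in both $N$ and $s$. Lemma \ref{seq} is stated with fixed parameters, so I would revisit its proof. The integrand $\sqrt y\,\sqrt{1+\epsilon y/A_N}/(1-\epsilon y/z_1)$ is analytic in $\epsilon$ for $|\epsilon|<\frac{2}{3}\min(z_1,A_N)$. Taylor's theorem with remainder therefore gives an explicit cubic-order bound, depending only on lower bounds for $z_1$ and $A_N$, and those are controlled by $\gamma$ for large $N$. The same analysis is needed when converting $\zeta^{3/2}$ to $\zeta$ via the power $2/3$. This conversion is harmless because the bracketed factor stays in $[1/2,2]$ for small $\epsilon$.
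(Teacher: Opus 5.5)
Your proposal is correct and follows the paper's proof. Like the paper, you bound $|\tilde\epsilon_{n-1,N}(s)|\le C(s_0,\gamma)N^{-2/3}\max\{|s|,1\}$, deduce that $N^{2/3}|R'_{n-1,N}(s)|$ is at most a polynomial in $s$, and absorb that polynomial together with the factor $e^{s/2}$ into the decay of $|Ai'(s)|$ from \eqref{ExB4Airy'}. The paper settles for the cruder polynomial $\max\{|s|^3,1\}$ where you get $\max\{|s|,1\}^2$, and your remarks on making the $O(\tilde\epsilon^2)$ term of Lemma \ref{seq} uniform in $N$ address a point the paper takes for granted without changing the route.
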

\begin{proof}
    Since $\tilde{\epsilon}_{n-1,N}(s)=\Theta\left(sN^{-\frac{2}{3}}\right)$, there exists a constant $C(s_0,\gamma)$ such that $|\tilde{\epsilon}_{n-1,N}(s)|\le  N^{-\frac{2}{3}}C(s_0,\gamma)\max\{|s|,1\}$ when $N$ is large enough. Consequently,
    \begin{equation*}
        N^{\frac{2}{3}}|R'_{n-1,N}(s)|\le C(s_0,\gamma)\max\{|s|^3,1\}.
    \end{equation*}
    The bound \eqref{ExB4Airy'} for $Ai'(s)$ leads to the finiteness of
        $M(s_0):=\sup_{s_0\le s\le N^\frac{1}{6}} \left|Ai'(s)\right|e^{\frac{s}{2}}\max\left\{|s|^3,1\right\}$
    when $N$ is large. Hence, for large $N$,
    \begin{equation*}
        N^\frac{2}{3}|R'_{n-1,N}(s)Ai'(s)|=N^\frac{2}{3}|R'_{n-1,N}(s)Ai'(s)e^{s/2}|e^{-\frac{s}{2}}\le C(s_0,\gamma)e^{-\frac{s}{2}}.
    \end{equation*}
    Similar discussion applies to $R'_{n,N-1}(s)$.
\end{proof}